
\documentclass[11pt]{amsart}
\usepackage{amssymb,amsmath,amsthm,amsfonts,mathrsfs}
\usepackage[hmargin=3cm,vmargin=3.5cm]{geometry}
\usepackage[dvipsnames,table,xcdraw]{xcolor}
\usepackage[colorlinks = true,
            linkcolor = Fuchsia,
            urlcolor  = ForestGreen,
            citecolor = WildStrawberry,
            anchorcolor = blue]{hyperref}

\usepackage{stackengine}

\usepackage[all,2cell]{xy} \UseAllTwocells 
\usepackage{tikz-cd}

\usepackage{color}
\usepackage{graphicx,psfrag}
\usepackage{amscd}  
\usepackage{stmaryrd}  %% double square brackets
\usepackage[all,2cell]{xy} \UseAllTwocells \SilentMatrices
\usepackage{tikz}
\usepackage[dvips]{epsfig}
\usepackage{comment}
\usepackage{kbordermatrix}
\usepackage{cancel}

\theoremstyle{definition}
\newtheorem{theorem}{Theorem}[section]

\newtheorem{remark}[theorem]{Remark}

\newtheorem{definition}[theorem]{Definition}
\newtheorem{conjecture}[theorem]{Conjecture}

\newtheorem{prop}[theorem]{Proposition}

\newtheorem{corollary}{Corollary}

\newtheorem{example}[theorem]{Example}

% Comment out before submission: 
%\usepackage[modulo]{lineno}
%\linenumbers

\usetikzlibrary{arrows,automata}
\usetikzlibrary{decorations.markings}
\usetikzlibrary{decorations.pathmorphing,shapes,decorations.text,shapes.geometric}
\usepgflibrary {shadings}

\newcounter{sarrow}

% Set page size and margins
% Replace `letterpaper' with`a4paper' for UK/EU standard size

\title{Foams with flat connections and algebraic K-theory}

\author{David Gepner}
\address{Department of Mathematics, Johns Hopkins University, Baltimore, MD 21218, USA}
\email{\href{mailto:gepner@jhu.edu}{gepner@jhu.edu}}

\author{Mee Seong Im}
\address{Department of Mathematics, United States Naval Academy, Annapolis, MD 21402, USA}
\email{\href{mailto:meeseongim@gmail.com}{meeseongim@gmail.com}}

\author{Mikhail Khovanov} 
\address{Department of Mathematics, Columbia University, New York, NY 10027, USA}
\email{\href{mailto:khovanov@math.columbia.edu}{khovanov@math.columbia.edu}}

\address{Department of Mathematics, Johns Hopkins University, Baltimore, MD 21218, USA}
\email{\href{mailto:khovanov@jhu.edu}{khovanov@jhu.edu}}

\author{Nitu Kitchloo}
\address{Department of Mathematics, Johns Hopkins University, Baltimore, MD 21218, USA}
\email{\href{mailto:nitu@math.jhu.edu}{nitu@math.jhu.edu}}

\makeatletter
\@namedef{subjclassname@2020}{%
  \textup{2020} Mathematics Subject Classification}

%    General info
\subjclass[2020]{Primary: 57R90,  19B99, 19D06, 18M05;
Secondary: 19A99, 55N22.}
\date{May 23, 2024}

%\dedicatory{This paper is dedicated to our advisors.}

\providecommand{\keywords}[1]{\textbf{\textit{Key words and phrases.}} #1}

\keywords{K-theory, TQFT, foams, flat connections.}

\date{\today}

\begin{document}

\def\E{\mathsf E}
\def\I{\mathsf I}
\def\R{\mathbb R}
\def\Q{\mathbb Q}
\def\Z{\mathbb Z}
\def\N{\mathbb N} 
\def\C{\mathbb C}
\def\S{\mathbb S}
\def\SS{\mathbb S} 
\def\GL{\mathsf{GL}} 

\def\for{\mathsf{for}}
\def\Hom{\mathsf{Hom}}

\newcommand{\dmod}{\mathsf{-mod}}
\newcommand{\comp}{\mathrm{comp}} % components 
\newcommand{\col}{\mathrm{col}}
\newcommand{\adm}{\mathrm{adm}}  % admissible colorings 
\newcommand{\Ob}{\mathrm{Ob}}
\newcommand{\Cob}{\mathsf{Cob}}
\newcommand{\UCob}{\mathsf{UCob}}
\newcommand{\COB}{\mathcal{COB}}
\newcommand{\ECob}{\mathsf{ECob}}
\newcommand{\id}{\mathsf{id}}
\newcommand{\undM}{\underline{M}}
\newcommand{\im}{\mathsf{im}}
\newcommand{\coker}{\mathsf{coker}}
\newcommand{\Aut}{\mathsf{Aut}}
\newcommand{\tripod}{\mathsf{Td}}
\newcommand{\BBC}{\mathbb{B}(\mathcal{C})}
\newcommand{\Pmod}{\mathrm{pmod}}
\newcommand{\gammaoneR}{\gamma_{1,R}}  % decide on notation
\newcommand{\gammaoneRbar}{\overline{\gamma}_{1,R}} % decide on notation
\newcommand{\gammaoneRprime}
{\gamma'_{1,R}}
\newcommand{\gammaoneRbarprime}
{\overline{\gamma}'_{1,R}}

\def\l{\lbrace}
\def\r{\rbrace}
\def\o{\otimes}
\def\lra{\longrightarrow}
\def\ed{\mathsf{ed}}
\def\Ext{\mathsf{Ext}}
\def\ker{\mathsf{ker}}
\def\mf{\mathfrak} 
\def\mcC{\mathcal{C}}
\def\mcS{\mathcal{S}}  % set of pairs  
\def\mcQC{\mathcal{QC}}
\def\mcA{\mathcal{A}}
\def\mcF{\mathcal{F}}
\def\mcE{\mathcal{E}}
\def\Fr{\mathsf{Fr}}  % free module 

\def\bbn{\mathbb{B}^n}
\def\ovb{\overline{b}}
\def\tr{{\sf tr}} 
\def\det{{\sf det }} 
\def\one{\mathbf{1}}   % unit  object of category 
\def\kk{\mathbf{k}}  %% base field  
\def\gdim{\mathsf{gdim}}  %% graded dimension 
\def\rk{\mathsf{rk}}
\def\IET{\mathsf{IET}}
\def\SAF{\mathsf{SAF}}

\newcommand{\indexw}{\R_{>0}} %subscript for weighted foams

\newcommand{\brak}[1]{\ensuremath{\left\langle #1\right\rangle}}
\newcommand{\oplusop}[1]{{\mathop{\oplus}\limits_{#1}}}
\newcommand{\addfigure}{\vspace{0.1in} \begin{center} {\color{red} ADD FIGURE} \end{center} \vspace{0.1in} }
\newcommand{\add}[1]{\vspace{0.1in} \begin{center} {\color{red} ADD FIGURE #1} \end{center} \vspace{0.1in} }
\newcommand{\vspin}{\vspace{0.1in} }

\newcommand\circled[1]{\tikz[baseline=(char.base)]{\node[shape=circle,draw,inner sep=1pt] (char) {${#1}$};}} %shifted dots

% redefine emptyset symbol 
\let\oldemptyset\emptyset
\let\emptyset\varnothing

%%%%% Offset in TOC
\let\oldtocsection=\tocsection
\let\oldtocsubsection=\tocsubsection
\renewcommand{\tocsection}[2]{\hspace{0em}\oldtocsection{#1}{#2}}
\renewcommand{\tocsubsection}[2]{\hspace{1em}\oldtocsubsection{#1}{#2}}

% to insert comments 
\def\MK#1{{\color{red}[MK: #1]}}
\def\bfred#1{{\color{red}#1}}

%\pgfdeclarelayer{background}
%\pgfdeclarelayer{foreground}
%\pgfsetlayers{background, foreground}
%\input{foamstyles.tikzstyles}

\begin{abstract}
This paper proposes a connection between algebraic K-theory and foam cobordisms, where foams are stratified manifolds with singularities of a prescribed form. We consider $n$-dimensional foams equipped with a flat bundle of finitely-generated projective $R$-modules over each facet of the foam, together with gluing conditions along the subfoam of singular points. In a suitable sense which will become clear, a vertex (or the smallest stratum) of an $n$-dimensional foam replaces an $(n+1)$-simplex with a total ordering of vertices.  We show that the first K-theory group of a ring $R$ can be identified with the cobordism group of decorated 1-foams embedded in the plane. A similar relation between the $n$-th algebraic K-theory group of a ring $R$ and the cobordism group of decorated $n$-foams embedded in $\mathbb{R}^{n+1}$ is expected for $n>1$. An analogous correspondence is proposed for arbitrary exact categories. Modifying the embedding and other conditions on the foams may lead to new flavors of K-theory groups. 
\end{abstract}

\maketitle
\tableofcontents

%%%%%%%%%%%%%%%%%%%%
%
% Introduction 
%
%%%%%%%%%%%%%%%%%%%%

\section{Introduction}
\label{sec_intro} 

This paper proposes a connection between the $n$th algebraic K-theory group $K_n(R)$ of a ring $R$, see~\cite{Q,Ro,Sr,We}, and the group of $(n+1)$-dimensional cobordism classes  of $n$-dimensional oriented foams equipped with a flat $R$-connection and embedded in $\R^{n+1}$. A foam is a stratified manifold with singularities of a prescribed form (see Definition~\ref{local foam}). In this paper we work primarily with one and two dimensional foams that are defined explicitly in Sections \ref{sec_K0} and \ref{sec_K1}. Foam facets (or open strata) carry flat bundles with fibers which are finitely-generated projective $R$-modules. Seams of a foam $F$ constitute a singular locus of codimension $1$ of $F$. Along the seams of $F$ these projective modules must either decompose as direct sums or fit into short exact sequences. At the codimension two singular locus, short exact sequences fit into length three filtrations of a projective module (alternatively, give compatible direct sum decompositions into three terms), and so on. For the most part we discuss low-dimensional foams here, up to $n=2$, but expect the relation to extend to all dimensions.

Cobordisms between $n$-dimensional oriented $R$-decorated foams are $(n+1)$-dimensional oriented $R$-decorated foams with boundary. We consider the group $\Cob_n(R)$ of $n$-dimensional decorated foams modulo \emph{bordant} or \emph{cobordant} foams. 
It is an abelian group with the addition given by the disjoint union of $n$-foams and the minus map $x\mapsto -x$ given by orientation reversal.  
There is a version $\Cob_n^1(R)$ of this group for $n$-foams embedded in $\R^{n+1}$ and cobordisms between them embedded in $\R^{n+2}$, where the minus map consists of reflecting a foam about a hyperplane in $\R^n$ and reversing its orientation. These two abelian groups come with a homomorphism 
\begin{equation}
    \for \ : \ \Cob_n^1(R)\lra \Cob_n(R)
\end{equation}
which forgets the embedding of $n$-foams into $\R^{n+1}$ and cobordisms between them into $\R^{n+2}$. 

The correspondence between the cobordism group of foams and the K-theory groups can be considered a rethinking of the Quillen $Q$-construction,  where a triangulated piecewise linear (PL) $(n+1)$-sphere $\SS^{n+1}$ realizing an element of $\pi_{n+1}(BQ\mcC)=K_{n}(\mcC)$, for an exact category $\mcC$ via a simplicial map to $BQ\mcC$ is replaced by the $n$-skeleton $U$ of its dual simplicial decomposition, with additional decorations, viewed as an $n$-foam embedded in $\R^{n+1}\subset \SS^{n+1}$, where the base point is deleted from $\SS^{n+1}$ resulting in $\R^{n+1}$. A PL-homotopy between two such based maps $\SS^{n+1}\lra BQ\mcC$  corresponds to a cobordism in $\R^{n+1}\times[0,1]$ between two embedded $n$-foams.

In Section~\ref{sec_K1_Quillen}
we sketch a construction of a homomorphism 
\begin{equation}\label{eq_main_map}
   % \overline{\gamma}^1_{n,R} \ : \ 
   K_n(R) \ \lra \ \Cob^1_n(R) 
\end{equation}
from the $n$-th higher K-theory group of $R$ to $\Cob^1_n(R)$ and its generalization from the category $R\mathrm{-pmod}$ of finitely generated projective $R$-modules to an exact category $\mcC$. We work out $n=1$ case in detail, obtaining a planar foam as a decorated 1-skeleton of a PL map from $\SS^2$ to the Quillen classifying space $BQ\mcC$, for an exact category $\mcC$. Cobordisms of planar foams should correspond to homotopies of such PL maps 
and one expects a homomorphism 
\begin{equation}\label{eq_main_map2}
     K_n(\mcC) \ \lra \ \Cob_n^1(\mcC) 
\end{equation}
from the higher algebraic $K$-theory of $\mcC$ to the cobordism group of $\mcC$-decorated $n$-foams embedded in $\R^{n+1}$. An optimistic expectation is that map \eqref{eq_main_map2} is an isomorphism. 

\medskip
The relationship between cobordism classes of decorated foams, and the K-theory groups of a ring $R$ suggests an underlying relationship on the level of topological spaces representing the respective groups. The K-theory groups of $R$ are the homotopy groups of a space by work of Quillen as described above. However, the cobordism classes of decorated foams has hitherto not been defined as homotopy groups of any topological space. However, there is ample motivation for such a space to exist (see the introduction to Section \ref{section_at}). We take our cue from the classical result of Thom and Pontrjagin that essentially says that there is a well-defined topological space that represents the cohomology theory whose coefficients are the graded group of cobordism classes of smooth manifolds with some fixed tangential structure. In \ref{section_at} we generalize this framework to cobordism classes of singular manifolds that have the structure of foams embedded in a Euclidean space. More precisely, we describe such a space along the lines of the Thom-Pontrjagin construction and conjecture that it is none other than a natural skeleton of Quillen's K-theory space. This justifies our expectation given by the map \eqref{eq_main_map2}. The present document can be seen as a verification of our conjecture in low dimensions using explicit geometric constructions and the notion of decoration of foams by projective $R$-modules expressed in the language of foams endowed by flat connections of projective $R$-modules. 

\medskip
The map \eqref{eq_main_map} is trivially an isomorphism for $n=0$, which is explained in Section~\ref{sec_K0}. Sections~\ref{sec_K1}, \ref{sec_K1_cobordisms}, and \ref{sec_K1_Quillen} discuss 2-foams as cobordisms between 1-foams and the relation of the cobordism group of 1-foams to the algebraic K-theory group $K_1$, that is, the $n=1$ case of the correspondence between foam cobordisms and K-theory. 

\medskip
Section~\ref{sec_K1} contains a brief introduction to 2-foams and flat connections on them, together with motivating examples of cobordisms between 1-foams matching 
defining relations in $K_1(R)$. 

Section~\ref{sec_K1_cobordisms} deals with 1-foams embedded in $\R^2$ and cobordisms between them, which are 2-foams embedded in $\R^3$, all decorated by finitely generated projective $R$-modules, or, equivalently, carrying a flat connection over $R\mathrm{-pmod}$. 
A homomorphism of abelian groups 
\begin{equation}\label{eq_main_map_2}
     \gammaoneR \ : \ \Cob^1_1(R)  \ \lra \ K_1(R),
\end{equation}
is constructed via the invariant $\widetilde{f}(U)$
of a planar $R$-decorated 1-foam $U$ defined in Section~\ref{subsec_cob_classes} in formula \eqref{eq_wt_f}. 

Theorem~\ref{thm_gamma_iso} in Section~\ref{subsec_cob_classes} shows that $\gammaoneR$ is an isomorphism, identifying $K_1(R)$ with the cobordism group $\Cob_1^1(R)$ of $R$-decorated (oriented) 1-foams embedded in $\R^2$. To prove the theorem we define a homomorphism in the opposite direction
\begin{equation}\label{eq_main_map_3}
     \gammaoneRbar \ : \ K_1(R) \ \lra \ \Cob^1_1(R) 
\end{equation}
and show that $\gammaoneR$ and $\gammaoneRbar$ are mutually-inverse isomorphisms. 
 Proofs in that section consist of explicit manipulations with cobordism classes of planar 1-foams. 

Forgetting the embedding of 1-foams in $\R^2$ and cobordisms between them in $\R^3$ leads to a homomorphism 
$\for :\Cob^1_1(R)\lra \Cob_1(R)$ to the cobordism group of $R$-decorated (oriented) 1-foams not embedded anywhere. Theorem~\ref{thm_cob_quotient} identifies the target group with the quotient $K_1(R)/\tau(K_0(R))$ by the image of the homomorphism $\tau:K_0(R)\lra K_1(R)$ in \eqref{eq_tau} taking the symbol $[P]$ of a finitely generated projective $R$-module $P$ to the transposition matrix in $\GL(R)$ of the two summands in $P\oplus P$. The two theorems produce a commutative diagram with isomorphisms for horizontal arrows  
\begin{equation}
\label{eq_CD}
\xymatrix{
\Cob^1_1(R) \ar[d]^{\for} \ar[rr]^{\gammaoneR}_{\cong} & & K_1(R) \ar[d]^{q_\tau}  \\ 
\Cob_1(R) \ar[rr]^{\gammaoneRprime}_{\cong}  & & K_1(R)/\tau(K_0(R)) \\ 
}
\end{equation}
and surjective homomorphisms $\for$ and $q_{\tau}$ (forgetful and the quotient by the image of $\tau$ homomorphisms) as vertical arrows. 

Sections~\ref{sec_K1}-\ref{sec_K1_Quillen} deal with foam cobordisms relating to the $K_1$ group for a ring $R$, briefly discussing a possible connection between $K_2$ and 2-foams up to cobordisms in Section~\ref{subsec_K2}.

%The construction of $R$-decorated $n$-foams admits a straightforward generalization to decorations by objects of an exact category $\mcC$, sketched in Section~\ref{sec_K1_Quillen}. 

Even if a suitable map \eqref{eq_main_map2}  is an isomorphism, it unlikely to immediately help with the computation of algebraic $K$-groups for rings and categories. Rather, one can expect to find explicit representatives for generators of various $K$-groups via specific decorated $n$-foams. Another possible application is to introduce modifications of higher $K$-theory groups for a ring $R$ or an exact category $\mcC$, by considering bordism groups of $R$- and $\mcC$-decorated $n$-foams with an additional or modified structure: 
\begin{itemize}
    \item Unoriented (rather than oriented) $n$-foams, 
    %which should relate to Hermitian algebraic K-theory {\bf \color{teal} cite M.Schlichting, Hermitian K-theory of exact categories, J. K-theory (2009)},  
    \item Framed $n$-foams and spin $n$-foams, 
    \item Foams $F$ and cobordisms between them equipped with a continuous map into a topological space $X$,
    \item $n$-foams $F$ together with an immersion or embedding into $\R^{n+k}$ (and cobordisms between foams immersed  or embedded into $\R^{n+k+1}$), for a fixed $k$, resulting in abelian groups $\Cob_n^k(R)$ or $\Cob_n^k(\mcC)$,  
\end{itemize}
and so on. A simple example of such modification, replacing planar 1-foams by 1-foams, is given in Theorem~\ref{thm_cob_quotient} in Section~\ref{one_f_cobs}. 

Beyond exact categories, 
foams can be decorated by flat connections in much greater generality.  Facets of a foam may be decorated by objects of a category which is not pre-additive. In particular, decorating facets by objects of a Zakharevich assembler category~\cite{Za1,Za2}, seams by coverings of an object by a pair of objects, and so on leads to cobordism groups of foams related to the  Zakharevich K-theory groups of assemblers. An example of the correspondence between decorated foam cobordisms and K-theory of assemblers is worked out in~\cite{IK24_SAF}.  

\vspace{0.07in} 

Part of the motivation for the present paper was the appearance of foams in link homology, where they are used to build state spaces of suitable planar graphs~\cite{Kh1,MV1,RW1,RWd}. These state spaces categorify quantum invariants of planar graphs (Murakami--Ohtsuki--Yamada invariants~\cite{MOY}) and fit into complexes which describe link homology for a given link diagram. Foams in link homology are two-dimensional and usually come with an embedding in $\R^3$. They give rise to a TQFT-like structure which is  \emph{multiplicative} on the disjoint union of planar graphs. The latter can be thought of as 1-foams embedded in $\R^2$. Foams can also be used to describe the Soergel bimodule category~\cite{Wd,RW2} and to  construct other link homology theories (see~\cite{RW2}, and we refer to~\cite{KK} for more references). Foams naturally appear in the Kronheimer--Mrowka homology theory for trivalent graphs embedded in 3-manifolds~\cite{KrMr,KR}. 
Some surprising interpretations of foams are discussed in~\cite{Kh3,KI}. 

The current paper proposes a very different use for foams, with \emph{additive}, not multiplicative, behaviour on the disjoint union, and extending to all dimensions. Informally, this can be compared to the well-known dichotomy between manifolds in low-dimensions versus in all dimensions. Low-dimensional manifolds relate to 3- and 4-dimensional TQFTs (topological quantum field theories). An $n$-dimensional TQFT is multiplicative on the disjoint union: its field-valued invariant of the union of closed $n$-manifold is the product of invariants of components, while the vector space assigned to an $(n-1)$-manifold is the tensor product of spaces assigned to components of the $(n-1)$-manifold. 
%{\it \color{blue} Rewrite sentence: The deepest and most interesting examples of TQFTs (at least those which currently admit a mathematically rigorous construction) tend to live in low dimensions.}  

Contrastingly, manifolds across all dimensions can be studied using generalized cohomology theories (K-theory, cobordisms, etc.), which are additive on the disjoint union of topological spaces. 

We expect that the same dichotomy 
\begin{itemize}
    \item Foams in \emph{low dimensions}: relations to  \emph{multiplicative} (quantum) invariants and TQFT invariants and structures, 
    \item Foams in \emph{all dimensions}: relations to  \emph{additive} invariants in the style of generalized cohomology theories and algebraic K-theory 
\end{itemize}
holds for suitably defined foams generalizing  manifolds. An $n$-manifold can be viewed an an $n$-foam with a single facet. For the invariants of the second type and their connection to algebraic $K$-theory, we use $R$-decorated or $\mcC$-decorated foams in this paper, for a ring $R$ or an exact category $\mcC$.  
\vspace{0.07in} 

{\bf Acknowledgments:} The last three authors would like to thank the Simons Foundation for their hospitality and valuable working atmosphere during the annual 2024 Simons Collaboration meeting. M.K. would like to acknowledge partial support from NSF grant DMS-2204033 and Simons Collaboration Award 994328.

\section{Motivation from the Cobordism Hypothesis} 
\label{section_at}

Over the past few decades, a certain class of singular manifolds known as {\em Foams} (see below) have made an appearance in low dimensional topology, as discussed at the end of Section~\ref{sec_intro}. The 2D topological quantum field theories that arise as tangle theories (leading to knot and link homology that categorify quantum link invariants) factor through intermediate constructions that are defined on cobordism classes of foams. The introduction of foams in this context has greatly simplified the computation of link invariants and also contributed to the development of the subject~\cite{Kh1,RWd,RW1,KK}. Foams have also been shown to encode surface defects in various well known 4D Quantum Field Theories~\cite{KrMr} and they naturally appear in categorified quantum groups~\cite{Mac09,LQR15,CGR17}. Landau--Ginzburg two-dimensional TQFTs can be extended to foams~\cite{KR07,MSV09}. 

\medskip
The above observations lead one to take foams seriously as singular manifolds in their own right, and inquire about the structure of a potential {\em Cobordism Category of Foams} along the lines of the cobordism hypothesis~\cite{BD95,Lu}. In our work in progress \cite{GKK}, we take the first steps in this direction. The current article describes an appealing (and unexpected) consequence of the main result of \cite{GKK} to algebraic K-theory. We state that consequence below and verify it by hand in low dimensions. As such, the present article can be seen as a motivation for the study of the monoidal (higher) infinity category of decorated foams. 

\medskip
Let $\mcC$ be a $E_1$-monoidal $(\infty,1)$-category, and let $\mcC_0$ denote its space of objects (or core).  The monoidal structure endows $\mcC_0$ with the structure of a topological monoid. Consider our example with $\mcC=R\mathrm{-pmod}$, the category of finitely-generated projective left $R$-modules. Then 
\[\mcC_0 =\bigsqcup_{P\in \Ob(\mcC)/\sim} B(\Aut(P)),
\]
where, for a topological monoid $G$, we write $BG$ for the geometric realization of the associated topological category ${\bf B}(G)$ (with one object and space of endomorphisms $G$). Here 
we are taking the disjoint union of classifying spaces of the groups $\Aut(P)$ over the set of isomorphism classes of finitely generated projective $R$-modules $P$. The monoid structure on $\mcC_0$ in this example is induced by the operation of direct sum of modules. Notice that $\mcC_0$ contains a unit given by the inclusion of the unit object ${\bf 1} \in B(\Aut(0))$. 

\begin{remark}
    It is somewhat unnatural to consider $\mcC_0$ as a topological monoid since this would involve mI aking a choice of direct sum; it is more naturally an $A_\infty$-monoid, but this is not a serious issue.
\end{remark}
The classifying space of the topological monoid $\mcC_0$
\[
B(\mcC_0) \ = \ \left( \bigsqcup_k \mcC_0^{\times k} \times \Delta^k \right) /\sim \quad \mbox{admits a filtration} 
\quad 
B_n(\mcC_0) \ := \ \left( \bigsqcup_{k\le n} \mcC_0^{\times k} \times \Delta^k \right) /\sim\, .
\]
Then, by definition, the loop space $\Omega B(\mcC_0)$ is the topological group completion of $\mcC_0$. When $\mcC$ is symmetric monoidal, then  this loop space $\Omega B(\mcC_0)$ is nothing other than the zero-space of the K-theory spectrum. In other words, in the example of $\mcC=R\mathrm{-pmod}$ we have
\[ \pi_n \Omega B(\mcC_0) = K_n(R).\]
Moreover, a simple argument using cellular approximation shows that $\pi_i \Omega B(\mcC_0) = \pi_i \Omega B_n(\mcC_0)$ for all $i < n-1$. Hence, in the case $\mcC=R\mathrm{-pmod}$, the spaces $\Omega B_n(\mcC_0)$ approximate K-theory of $R$ as $n$ increases. 

\medskip
Consider the following local structure of a an $(n-1)$-dimensional foam in $\R^n$:

\begin{definition} \label{local foam}
Define the standard $(n-1)$-dimensional foam (or simply $(n-1)$-foam) $F_n$ in $\R^n$ inductively as the subspace of the $n$-simplex, $F_n \subset \Delta^n \subset \R^n$, given by the cone on the subspace $\partial F_n := F_n \cap \partial \Delta^n$, and with cone vertex being the barycenter of $\Delta^n$. By induction, we assume that $\partial F_n$ is the union of the $(n+1)$ standard $(n-2)$-foams $F_{n-1}$ on the codimension one facets of $\Delta^n$ glued along $\partial F_{n-1}$. We begin the induction by defining $F_1$ as the barycenter of $\Delta^1$ and with $\partial F_1 = \emptyset$. 

\medskip
We notice from this definition that $F_n$ is a stratified manifold with (stratified) boundary $\partial F_n$, and with strata given by $F_n^{n-1} \subset F_n^{n-2} \subset \ldots \subset F_n^0 := F_n$. Here $F_n^{n-1}$ is the barycenter of $\Delta^n$, or the cone point of $F_n$. Then by induction, we define $F_n^{n-2}$ as the cone on the union of subspaces $F_{n-1}^{n-2}$ of all the foams $F_{n-1}$ along the facets of $\Delta^n$. Similarly, $F_n^{n-3}$ is the cone on the union of subspaces $F_{n-1}^{n-3}$ along each facet of $\Delta^n$, and so on. 
\end{definition}

\medskip
Let $\mcC$ be an $E_1$-monoidal $(\infty, 1)$-category such that $\Aut(\bf 1)$ is trivial. In \cite{GKK}, we define an $E_1$-monoidal $(\infty , n-1)$-category $\mathscr{F}_{n-1}(\mcC_0)$ of singular manifolds embedded in $\R^n$, framed in codimension one and decorated by $\mcC_0$ (see \cite{Lu}, Section 4 for the terminology). Furthermore, the neighborhood of the singular strata in these manifolds is locally modelled by the standard $k$-foams $F_{k+1}$ in $\R^{k+1}$ for some $k < n$. We call this category, the category of foams in $\R^n$, framed in codimension one and decorated by $\mcC_0$. Unraveling the definitions, we see that the objects of $\mathscr{F}_{n-1}(\mcC_0)$ are 0-foams in $\R$ decorated by elements in $\mcC_0$, one-morphisms are 1-foams in $\R^2$ whose singular points are decorated by a pair of elements in $\mcC_0$ corresponding to the decorations of the two incoming facets (with the outgoing facet decorated by the product of these elements), and so on till $(n-1)$-morphisms. 

\medskip
The central observation of this section, and the motivation for this article is the following result which we state as a conjecture, though we hope to provide a proof in \cite{GKK}.

\begin{conjecture} The topological group completion of $\mathscr{F}_{n-1}(\mcC_0)$ is homotopy equivalent to the space $\Omega B_n(\mcC_0)$. In particular, $\pi_{k+1}(B_n(\mcC_0))$ for $k < n-1$ is isomorphic to the group of cobordism classes of closed decorated $k$-foams embedded in $\R^{k+1}$ and cobordism foams embedded in $\R^{k+1} \times [0,1]$ (compatibly with a framing on the foams), and with strata labeled by elements of $\mcC_0$ compatibly along singular subsets. The homotopy groups $\pi_i (\Omega B_n(\mcC_0))$ for $i \geq n-1$ are represented by cobordism classes of $i$-foams embedded in $\R^{i+1}$, and cobordism foams embedded in $\R^{i+1} \times [0,1]$ where the foams and cobordisms have singular strata locally modelled by $F_{k+1} \times \R^{i-k}$ for some $k < n$.
\end{conjecture}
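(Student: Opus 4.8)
The plan is to run a Pontrjagin--Thom / scanning argument in the style of the Galatius--Madsen--Tillmann--Weiss theorem, with the cobordism category of tangentially structured manifolds replaced by $\mathscr{F}_{n-1}(\mcC_0)$ and the relevant Thom spectrum replaced by the skeletally filtered bar construction $B_\bullet(\mcC_0)$. The key structural input is a duality dictionary. A decorated $0$-foam in $\R$ --- finitely many ordered points labeled by $\mcC_0$, the label becoming the unit outside a compact set, which is unambiguous because $\Aut(\mathbf 1)=1$ --- is exactly a point of the two-sided bar construction $B(\mcC_0)$, the open cell $\{(c_1,\dots,c_k)\}\times\operatorname{int}\Delta^k$ parametrizing configurations of $k$ distinct labeled points. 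More generally, a $\mcC_0$-decorated, codimension-one framed $i$-foam in $\R^{i+1}$ all of whose singularities are locally modeled on the standard foams $F_{k+1}$ with $k<n$ is the \emph{dual complex} of a PL map $\R^{i+1}\to B_n(\mcC_0)$ transverse to the stratification of $B_n(\mcC_0)$ by open bar cells: the codimension-$j$ stratum of the foam is the preimage of the union of $j$-cells $\mcC_0^{\times j}\times\operatorname{int}\Delta^j$, the codimension-one framing of the foam encodes the normal coorientation of those cells, and the local model $F_{k+1}$ at a codimension-$k$ stratum is exactly the link of a $k$-cell of $B_n(\mcC_0)$. This is the foam-theoretic incarnation of the Quillen $Q$-construction recalled in the introduction, and the $n$-skeleton truncation on the bar side corresponds precisely to allowing only foam singularities of complexity below $n$.

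First I would construct the comparison map. Following \cite{GKK}, present $\mathscr{F}_{n-1}(\mcC_0)$ as an $(n-1)$-fold cobordism category whose object space is a model for $B(\mcC_0)$ and whose $j$-morphism spaces are suitably topologized spaces of $\mcC_0$-decorated framed $j$-foams in $\R^{j+1}$ with singularities among the $F_{k+1}$, $k<n$. Transporting the ``spaces of manifolds''/scanning machinery in the style of Galatius and Randal-Williams from manifolds to foams via the local models $F_{k+1}\times\R^{i-k}$, and delooping $(n-1)$ times, produces a natural Pontrjagin--Thom map from the topological group completion of $\mathscr{F}_{n-1}(\mcC_0)$ to $\Omega B_n(\mcC_0)$. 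Both sides are grouplike $H$-spaces --- an inverse for a foam is given by reflecting in a hyperplane and reversing orientation, and the target is a loop space --- so it is enough to prove the map is an isomorphism on $\pi_0$ and on all higher homotopy groups.

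Then I would compute those homotopy groups by transversality. An element of $\pi_j\Omega B_n(\mcC_0)=\pi_{j+1}B_n(\mcC_0)$ is a based map $\SS^{j+1}\to B_n(\mcC_0)$; put it in general position with respect to the open-bar-cell stratification (a simplicial-approximation substitute for smooth transversality, since the strata are not manifolds) and take the dual complex of the resulting preimage stratification, obtaining by the dictionary above a closed $\mcC_0$-decorated framed $j$-foam in $\R^{j+1}\subset\SS^{j+1}$ with singularities among the $F_{k+1}$, $k<n$. A parametrized version over $[0,1]$ shows this foam is well defined up to foam cobordism and that homotopic maps correspond to cobordant foams and conversely; hence $\pi_j$ of either side is identified with the group of these foam cobordism classes, and the Pontrjagin--Thom map is visibly the identity under this identification. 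Combined with the cellular-approximation bound $\pi_i\Omega B_n(\mcC_0)=\pi_i\Omega B(\mcC_0)$ for $i<n-1$ already noted in the text, this yields the stated description of $\pi_{k+1}B_n(\mcC_0)$ for $k<n-1$ by cobordism classes of $k$-foams in $\R^{k+1}$, and for $i\ge n-1$ the description by $i$-foams whose singularities are only those $F_{k+1}\times\R^{i-k}$ with $k<n$.

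\textbf{The main obstacle.} I expect this to be stratified transversality into the \emph{non-manifold} target $B_n(\mcC_0)$, whose cells have the form $\mcC_0^{\times j}\times\Delta^j$ with $\mcC_0\simeq\bigsqcup_P B\Aut(P)$: smooth transversality is unavailable and must be replaced by a general-position/simplicial-approximation theorem that produces foams with \emph{exactly} the prescribed local models $F_{k+1}$, controlling the links of all strata at once. Entangled with this is the coherence bookkeeping --- the monoidal product giving the bar face maps, the merging of foam facets along seams, and the codimension-one framings are only coherently, not strictly, associative and unital, so the comparison must be built and checked as a map of $E_1$-monoidal $(\infty,n-1)$-categories, which is where the apparatus of \cite{GKK} and \cite{Lu} carries the load. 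A secondary, more routine ingredient is the group-completion/homology-fibration step of McDuff--Segal and Galatius--Randal-Williams type needed to pass from the classifying space of the foam cobordism category to the scanned object; this ought to be a technical but standard adaptation once the space-of-foams model is in hand. Essentially all of the real difficulty should sit in the stratified-transversality step, carried out compatibly with these $\infty$-categorical coherences.
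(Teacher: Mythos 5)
The statement you have tried to prove is stated in the paper as a \emph{conjecture}, not a theorem, and the paper does not prove it: the authors explicitly defer a proof to \cite{GKK}, and the only rigorous content they supply in Section~\ref{section_at} is the $n=1$ case, done by identifying $\mathscr{F}_0(\mcC_0)$ with the free topological monoid on $\mcC_0$ and invoking the James construction together with the classical Pontrjagin--Thom argument for $\pi_{i+1}\Sigma({\mcC_0}_+)$ (where no stratified transversality is needed, since the relevant preimages are honest codimension-one submanifolds). So there is no ``paper's own proof'' to compare your argument against; at best one can compare your plan with the route the paper says \cite{GKK} will take.

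That route is genuinely different from yours. The paper reports that \cite{GKK} establishes the group completion of $\mathscr{F}_{n-1}(\mcC_0)$ by a \emph{universal property} of the category of decorated foams together with an induction, in the spirit of the cobordism hypothesis, landing first on $\Omega\widetilde B_n(\mcC_0)$ (the ``fat'' $n$-skeleton) and only then passing to $\Omega B_n(\mcC_0)$ by collapsing the degeneracies, which on the foam side corresponds to deleting unit-decorated facets. Your plan is instead a Galatius--Madsen--Tillmann--Weiss style scanning argument plus a Pontrjagin--Thom transversality dictionary between decorated foams and PL maps to the stratified space $B_n(\mcC_0)$. The GMTW approach has the virtue of explaining the homotopy groups directly as cobordism groups, which is the conclusion one wants; the universal-property approach sidesteps the need for a stratified transversality theorem entirely, which is precisely the step you single out as the main obstacle. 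That obstacle is real and not addressed in your proposal: $B_n(\mcC_0)$ is not a manifold, its strata are spaces of the form $\mcC_0^{\times j}\times\Delta^j$ with $\mcC_0\simeq\bigsqcup_P B\Aut(P)$, and there is no off-the-shelf general-position theorem guaranteeing that a generic PL map has preimage stratification whose links are exactly the prescribed $F_{k+1}$. Likewise the $E_1$-monoidal $(\infty,n-1)$-categorical coherence between the bar filtration, the scanning map, and the $H$-space structure on the foam side is an essential input that your proposal explicitly delegates to \cite{GKK} rather than supplies. So what you have written is a reasonable and self-aware program, and it is a different program from the one the paper gestures at, but it is not a proof, and neither is anything in the paper itself; the Conjecture remains a conjecture in both places.
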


\medskip
In \cite{GKK}, we use a universal property of the category $\mathscr{F}_{n-1}(\mcC_0)$, and an induction argument to establish that the group completion of this category (i.e. the loop space of the space obtained by formally inverting all morphisms in the delooping of $\mathscr{F}_{n-1}(\mcC_0)$ under the monoidal structure), is the space $\Omega \widetilde{B}_n(\mcC_0)$, where $\widetilde{B}_n(\mcC_0)$ is the ``fat" $n$-skeleton of $B(\mcC_0)$ in which degeneracy maps have not been factored out. The factoring out of these degeneracies corresponds to the relation that discards any facet of a foam that is decorated by the unit object ${\bf 1}$ of $\mcC$. 

\medskip
Let us consider the example of $n=1$. Here, the $E_1$-monoidal $(\infty, 0)$-category $\mathscr{F}_0(\mcC_0)$ is nothing other than the free topological monoid on the space $\mcC_0$. It is well known that the James construction identifies its group completion with the space $\Omega \Sigma({\mcC_0}_+)$, where ${\mcC_0}_+$ denotes $\mcC_0$ with a disjoint basepoint. This is the space $\Omega \widetilde{B}_1(\mcC_0)$ whose homotopy groups can be described by the Thom--Pontrjagin construction as follows. Notice that $\pi_i (\Omega \Sigma ({\mcC_0}_+)) = \pi_{i+1} (\Sigma ({\mcC_0}_+))$. The standard argument of Thom--Pontrjagin shows that an element in $\pi_{i+1} (\Sigma ({\mcC_0}_+))$ is represented by a map $\varphi : \R^{i+1} \longrightarrow \R \times \mcC_0$ with the property that after composing with the projection to $\R$, $\varphi$ is proper with $0$ being a regular value. Similarly, a homotopy between two such maps is represented by a map $H\varphi : \R^{i+1} \times [0,1] \longrightarrow \R \times \mcC_0$ which when composed with the projection to $\R$ is proper with $0$ being a regular value. Taking the pre-image of $\mcC_0$ under $\varphi$ gives rise to the $i$-foams embedded in $\R^{i+1}$ that are framed in codimension one, and decorated by $\mcC_0$. Similarly, the pre-image of $\mcC_0$ under $H\varphi$ gives rise to a $(i+1)$-foam with boundary embedded in $\R^{i+1} \times [0,1]$ that are framed in codimension one and decorated by $\mcC_0$. The local structure of these foams is described in Definition~\ref{local foam}. In particular, these foams are non-singular.

\medskip
In the above argument, we could have chosen ${\bf 1}$ as the basepoint of $\mcC_0$ instead of a disjoint basepoint. Our assumption that $\Aut(\bf 1)$ is trivial implies that $\mcC_0 = {\bf 1} \sqcup \overline{\mcC}_0$, where $\overline{\mcC}_0$ is the complement of $\bf 1$ in $\mcC_0$. Applying the Thom--Pontrjagin construction described above to the equivalence $\Omega B_1(\mcC_0) = \Omega \Sigma (\mcC_0) = \Omega \Sigma ({\overline{\mcC}_0}_+)$ shows that the homotopy groups of $\Omega B_1(\mcC_0)$ represent cobordism classes of foams almost identical to the ones above, except that we allow facets to be decorated only by elements of $\overline{\mcC}_0$. The projection map $\Omega \widetilde{B}_1(\mcC_0) \longrightarrow \Omega B_1(\mcC_0)$ in homotopy can be identified with the map that discards a facet decorated by the unit element.

%%%%%%%%%%%%%%%%%%%%
%
% Interpretation of Grothendieck group 
%
%%%%%%%%%%%%%%%%%%%%

\section{Interpretation of the Grothendieck group via  graphs  cobordisms} 
\label{sec_K0} 

%%%%%%%%%%%%%%%
% Grothendieck group and cobordisms
%%%%%%%%%%%%%%%

\subsection{Grothendieck group and cobordisms between decorated \texorpdfstring{$0$-foams}{0-foams}}\label{subsec_Groth}

%%%%%%%%%%%%%%%%%
% Decoration by projective $R$-modules
%%%%%%%%%%%%%%%%%

\subsubsection{Foams decorated by projective $R$-modules}\label{subsubsec_decorated}
For a ring $R$, we let $K_0(R)$ denote the Grothendieck group of finitely-generated projective left $R$-modules. 
From here on we always assume that our projective $R$-modules are finitely-generated left $R$-modules. 
The Grothendieck group has generators $[P]$, for a projective $R$-module $P$, and defining relations $[P_2]=[P_1]+[P_3]$ for isomorphisms $P_2\cong P_1\oplus P_3$. 

An $R$-decorated 0-foam is a finite set of oriented points with a projective module associated to each point. A point $b$ with its decoration can be written as $(b,s(b),P_b)$, where $s(b)\in \{+,-\}$ is the sign or orientation and $P_b$ the module assigned to $b$. Depending on the context, we may write $s(b)\in \{+,-\}$ or $s(b)\in \{1,-1\}$. 

A \emph{closed $R$-decorated 1-foam} is a finite oriented graph $U$ with a flat connection with projective $R$-modules as fibers. Let us explain what this means. 

Each vertex of $U$ should be  trivalent and be either an \emph{in} vertex, with two edges going in and one going out, or an \emph{out} vertex, with one edge going in and two edges going out, see the left half of Figure~\ref{fig8_007}. We think of a pair of edges as merging into a single edge or an edge splitting into a pair of edges.  The pair of edges is referred to as \emph{thin edges at the vertex} and the third edge as as the \emph{thick edge at the vertex}.  More precisely, terminology \emph{thin} and \emph{thick} should be used for a pair of a vertex and an adjacent half-edge. 
Graph $U$ may have loops from a vertex to itself, multiple edges between vertices, and circles without vertices on them. 

\input{fig2_002}

The underlying topological space of a 1-foam $U$ is a finite CW one-complex associated with the graph $U$ (verticeless circles of $U$ are then just circles in the corresponding topological space).  

By an $R$-flat connection over $U$ we mean the following data. Over each point $b$ of $U$ other than a vertex a f.g. projective $R$-module $P_b$ is fixed. 
The modules carry discrete topology. In a neighbourhood $V$ of $b$ disjoint from vertices of $U$ there is fixed a trivialization $P_b\times V$. These trivializations are compatible for various points $b$, giving flat connections over edges of $U$. 

 If a connected component of $U$ is a circle without vertices on it,  fixing a base point $b$ with the fiber $P_b$ over it and going along the circle in the direction of its orientation gives a monodromy automorphism $f:P_b\lra P_b$ of $P_b$, see Figure~\ref{fig2_002} on the right. 
For an edge which is an interval, the flat connection gives a canonical identification of fibers over points of the edge, see Figure~\ref{fig2_002} on the left.

At each vertex $v$ an isomorphism of projective modules $P_2\cong P_1\oplus P_3$ is fixed, where $P_2$ is the fiber of a point on the thick edge near $v$, and $P_1,P_3$ are the  fibers over points on the thin edges near $v$, see Figure~\ref{fig8_007}. 

A connected neighbourhood of a vertex $v$ in a 1-foam $U$ is called a \emph{tripod}. As a topological space it's a quotient of $[0,1]\times \{0,1,2\}$ by the identification of the endpoings $1\times 0\sim 1\times 1 \sim 1\times 2$. A tripod is decorated as explained above and shown in Figure~\ref{fig8_007}.

\input{fig8_007}

\vspace{0.07in} 

\vspace{0.07in}

An example of a 1-foam $U$ is shown in Figure~\ref{fig8_008} on the left. It is the  disjoint union of the 1-foam called the \emph{theta-foam} and a circle. Picking a base point on each edge, as in Figure~\ref{fig8_008} on the right, shows that the flat connection data on $U$ is described by isomorphisms $P_1\oplus P_3\stackrel{\cong}{\lra}P_2\stackrel{\cong}{\lra} P_1\oplus P_3$ and $P_0\stackrel{\cong}{\lra} P_0$. Observe that the two vertices have two (usually different) isomorphisms between $P_2$ and $P_1\oplus P_3$ associated with them.

\vspace{0.07in} 

\input{fig8_008}

\vspace{0.07in}

A 1-foam $U$ with boundary a 0-foam $M$, so that $\partial U=M$, has a local structure of a 1-foam in its interior $U\setminus M$, with flat $R$-connection and orientations of edges. Flat connection on $U$ restricts to a projective $R$-module $P_m$ for each point $m\in M$. Orientation of $U$ induces that on its boundary points via the  standard convention. 

We also consider the case when the boundary of $U$ is decomposed into two disjoint sets $\partial U = M_1\sqcup (-M_0)$ so that $U$ is viewed as a cobordism from the $R$-decorated 0-foam $M_0$ to the $R$-decorated 0-foam $M_1$, see Figure~\ref{fig2_001}. We also write $M_i=\partial_i U$, $i=0,1$. 

\input{fig2_001}

We say that $M_0,M_1$ are \emph{cobordant} if 
$\partial U \cong M_1\sqcup (-M_0)$ for some $R$-decorated 1-foam $U$. 
Denote by $\Cob_0(R)$ the set of cobordism classes of $R$-decorated 0-foams and denote by $[M]$ the cobordism class of a decorated 0-foam $M$. Disjoint union of 0-foams and cobordisms between them turns $\Cob_0(R)$ into a commutative monoid with the empty 0-foam as the unit element. Define the dual of a decorated point $(p,\pm,P)$ to be the point $(p',\mp,P)$ with the opposite orientation to that of $p$ and decorated by the same projective module $P$. 
There is a cobordism from the disjoint union of these decorated points to the empty 0-foam by connecting these two points by an oriented interval with the trivial connection on $P$. Consequently, $\Cob_0(R)$ is an abelian group. 

\begin{prop}\label{prop_iso_Groth}
    There is a natural isomorphism 
    \begin{equation}
        \gamma_{0,R} \ : \ \Cob_0(R) \lra K_0(R)
    \end{equation}
    between the cobordism group $\Cob_0(R)$ and the Grothendieck group $K_0(R)$ given by 
    \begin{equation}\label{eq_def_gamma}
      \gamma_{0,R} ([M]) \ = \ \sum_{b\in M} s(b)[P_b],
\end{equation}
the sum over all points $b$ of a 0-foam  $M$. 
\end{prop}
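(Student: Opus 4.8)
The plan is to show $\gamma_{0,R}$ is an isomorphism by exhibiting an explicit inverse, so the first task is to check that formula \eqref{eq_def_gamma} descends to cobordism classes. Since $\Cob_0(R)$ is a group whose negation is orientation reversal, and the right-hand side of \eqref{eq_def_gamma} is additive under disjoint union and changes sign under orientation reversal, it is enough to show that $\gamma_{0,R}$ vanishes on boundaries: if a decorated $0$-foam $N$ equals $\partial U$ for some decorated $1$-foam $U$, then $\sum_{b\in N}s(b)[P_b]=0$ in $K_0(R)$. Along each edge of $U$ the flat connection canonically identifies all fibres, so every edge $e$ carries a well-defined class $[P_e]\in K_0(R)$; inspecting the two vertex types one sees that the decoration $P_2\cong P_1\oplus P_3$ at a trivalent vertex (with $P_2$ on the thick edge) says exactly that the sum of $[P_e]$ over the edges incoming at that vertex equals the sum over the edges outgoing. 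Summing this relation over all vertices of $U$, the contributions of edges with both endpoints interior cancel in pairs; bare circles carry no boundary points, and an interval component joining $b_0$ to $b_1$ contributes $s(b_0)[P_{b_0}]+s(b_1)[P_{b_1}]=0$ by flatness together with the boundary convention $s(b_0)=-s(b_1)$; what survives is $\sum_{b\in\partial U}\varepsilon(b)[P_b]=0$, where $\varepsilon(b)=\pm1$ records whether the incident edge points into or out of $U$ at $b$. Since $\varepsilon(b)$ and $s(b)$ agree up to a single global sign fixed by the orientation convention, this is $\gamma_{0,R}(\partial U)=0$, and additivity of $\gamma_{0,R}$ on $\Cob_0(R)$ is then immediate.

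Next I would construct the inverse $\overline{\gamma}_{0,R}\colon K_0(R)\to\Cob_0(R)$ by setting $\overline{\gamma}_{0,R}([P])=[(b,+,P)]$, the class of a single positively oriented point decorated by $P$; the only thing to verify is that this kills the defining relations of $K_0(R)$, i.e.\ that $[(b,+,P_1\oplus P_3)]=[(b_1,+,P_1)]+[(b_3,+,P_3)]$ in $\Cob_0(R)$ whenever $P_2\cong P_1\oplus P_3$, which is witnessed by a single tripod decorated with that isomorphism --- thick edge labelled $P_2$, thin edges labelled $P_1,P_3$, oriented as an in- (equivalently out-) vertex --- whose three free ends form precisely the $0$-foam expressing that relation as null-bordant. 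Hence $\overline{\gamma}_{0,R}$ descends to $K_0(R)$. That the two composites are identities is then formal: $\gamma_{0,R}\overline{\gamma}_{0,R}([P])=[P]$ on generators, while $\Cob_0(R)$ is generated by classes of single decorated points, with $\overline{\gamma}_{0,R}\gamma_{0,R}([(b,+,P)])=[(b,+,P)]$ and $\overline{\gamma}_{0,R}\gamma_{0,R}([(b,-,P)])=-[(b,+,P)]=[(b,-,P)]$ because negation is orientation reversal. Naturality in $R$ is read off directly from \eqref{eq_def_gamma}, extension of scalars along $R\to R'$ being compatible with the decorations on both groups.

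The one step with genuine content is the well-definedness: one must confirm that the local relation $P_2\cong P_1\oplus P_3$ at a vertex really behaves as a conservation law ``incoming $=$ outgoing'' for the $K_0$-classes of edges under both vertex conventions, and then be careful about the degenerate components --- bare circles, and interval components with both endpoints on $\partial U$ --- so that the telescoping over vertices collapses exactly to the signed boundary sum. Everything afterward is bookkeeping with generators, with the tripod supplying the single nontrivial cobordism that is needed.
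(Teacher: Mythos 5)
Your proof is correct and reaches the same conclusion, but by a genuinely different route from the paper's. The paper's argument is a brief Morse-decomposition: it checks that the invariant \eqref{eq_def_gamma} is unchanged across each of the three elementary cobordisms (tripod, cup, cap), and then invokes the fact that any $1$-foam cobordism decomposes into such pieces, so the assignment is exactly compatible with the defining presentation of $K_0(R)$. You instead (i) establish well-definedness globally via a Kirchhoff-type conservation law over the whole bordism, and (ii) construct an explicit inverse $\overline{\gamma}_{0,R}$ and verify the two composites on generators. Both approaches are sound. What your route buys you is avoidance of the (unstated in the paper) decomposition lemma for $1$-foam cobordisms into elementary pieces; what it costs is the bookkeeping of boundary signs and the case analysis over degenerate edges (circles, loops at a vertex, intervals with both endpoints on $\partial U$), which you correctly flag as the step with genuine content. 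A small caution: be sure the sign convention relating $s(b)$ to the edge orientation at a boundary point is fixed once and used consistently in the telescoping over vertices, and note the minor typo in the relation verified for $\overline{\gamma}_{0,R}$ (it should read $[(b,+,P_2)]$ with $P_2\cong P_1\oplus P_3$). The tripod cobordism witnessing the $K_0$-relation for the inverse map is the same elementary cobordism the paper uses, just deployed in the opposite direction.
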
 

\begin{proof}
    Viewing a trivalent vertex as a cobordism from the union of two points $(b_i,+,P_i)$, $i=1,2$ to the point $(b_3,+,P_3)$, the isomorphism $P_3\cong P_1\oplus P_2$ exactly matches the defining relation $[P_3]=[P_1]+[P_2]$ in $K_0(R)$. The union of points $(b,+,P)$ and $(b,-,P)$ is null-cobordant, corresponding to the relation $[P]-[P]=0$ in the Grothendieck group. A cobordism between 0-foams $M_0,M_1$ can be presented as a composition of vertices, cups and caps (creation and annihilation of pairs of oppositely signed points carrying the same projective module), implying the proposition. 
\end{proof}

\begin{example} 
An example of a cobordism between $R$-decorated $0$-foams is shown in Figure~\ref{fig8_020}. Foam at the bottom boundary corresponds to the element $[P_0]-[P_0]+[P_1\oplus P_2]-[P_2]\in K_0(R)$, which is equal to the element $[P_1]$ associated to the top boundary. Equal expressions in $K_0(R)$ correspond to cobordant $0$-foams.
\input{fig8_020}

\end{example}

\begin{remark} One can consider unoriented 0-foams modulo cobordisms which are unoriented 1-foams. The flat bundle decoration of foams is the same as before. In particular, at each vertex two ``thin'' edges merge into a  ``thick'' edge together with a choice of an isomorphism $P_1\oplus P_3\cong P_2$ for fibers of the connection over points of thin edges and the thick edge.  In this case for any 0-foam $M$ the union $M\sqcup M$ is null-cobordant, and the cobordism group of unoriented 0-foams can be identified with $K_0(R)/2 K_0(R)$. 
\end{remark} 

Denote by $\COB_1(R)$ the category of 1-foam cobordisms. Objects of $\mathcal{COB}_1(R)$ are $R$-decorated 0-foams. Morphisms from $M_0$ to $M_1$ are $R$-decorated 1-foams $U$ with an isomorphism $\partial U \cong M_1\sqcup (-M_0)$ fixed. Two morphisms $U_1,U_2$ are equal in $\COB_1(R)$ if they are homeomorphic rel boundary and preserving all structure (projective $R$-modules assigned to edges, monodromies and direct sum decompositions).  

Disjoint union of 0-foams and 1-foams, the dual of a 0-foam and asssociated duality morphisms turn $\COB_1(R)$ into a rigid symmetric category. The unit object is given by the empty $0$-foam. The group $\Cob_1(R)$ can be recovered as the group of equivalence classes of objects of   $\COB_1(R)$ where two objects are in the same equivalence class if there exists a morphism between them. 

%%%%%%%%%%%%%%%%%
% Embedded foams 
%%%%%%%%%%%%%%%%%

\subsubsection{Embedded foams}
It is also useful to consider cobordism groups for foams embedded in a Euclidean space one dimension greater than the dimension of the foam. 

By an embedded 0-foam and 1-foam we mean a 0-foam $M$ embedded in $\R$ and a 1-foam 
$U$ embedded in $\R^2$. If an embedded 1-foam $U$ is viewed as a cobordism from 0-foam $\partial_0 U$ to 0-foam $\partial_1 U$, we assume that $U\subset \R\times [0,1]$ so that $\partial_i U = U \cap (\R\times \{i\})$, $i=0,1$. 

\vspace{0.07in}

Given two $R$-decorated points $(b_1,s_1,P_1)$, $(b_2,s_2,P_2)$ on a line  with the same sign $s_1=s_2$, there is an embedded 1-foam that exchanges them given by merging these points into a point $(b,s_1,P_1\oplus P_2)$ by creating a trivalent vertex and then splitting them back in the opposite order via the second vertex, see Figure~\ref{fig8_009} on the left. If the points have opposite signs, a rotation of this cobordism exchanges the points, see Figure~\ref{fig8_009} on the right. 

\vspace{0.07in} 

\input{fig8_009}

Consequently, equivalence classes of $R$-decorated embedded 0-foams constitute an abelian group under the disjoint union operation. The inverse of an embedded 0-foam is given by reflecting it in $\R$ and reversing the signs of all points (reflecting is not even necessary but provides for a particularly simple cobordism between the union of a foam and its reflection and the empty 0-foam in $\R$). 

Denote by $\Cob_0^1(R)$ the cobordism group of embedded $R$-decorated 0-foams. 

\begin{prop}
    There is a natural isomorphism 
    \begin{equation}
        \gamma^1_{0,R} \ : \ \Cob^1_0(R) \lra K_0(R)
    \end{equation}
    taking the class $[M]$ of an embedded 0-foam $M$ to the signed sum of symbols of its fibers as in \eqref{eq_def_gamma}. 
\end{prop}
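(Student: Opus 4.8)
The plan is to follow the proof of Proposition~\ref{prop_iso_Groth} verbatim and then check that imposing an embedding into $\R$ (with cobordisms into $\R\times[0,1]$) changes nothing. First I would note that \eqref{eq_def_gamma} does define a homomorphism $\gamma^1_{0,R}$: it is visibly additive under disjoint union, and it is well-defined because an embedded 1-foam cobordism between embedded 0-foams is in particular a 1-foam cobordism, so the signed sum of symbols is unchanged by Proposition~\ref{prop_iso_Groth} (equivalently, one checks invariance directly under the elementary moves: a trivalent vertex contributes the relation $[P_3]=[P_1]+[P_2]$, and the birth or death of an oppositely-oriented pair contributes $[P]-[P]=0$). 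Surjectivity is immediate, since the single embedded point $(b,+,P)$ maps to $[P]$ and $(b_1,+,P)\sqcup(b_2,-,Q)$ maps to $[P]-[Q]$, so every element of $K_0(R)$ is hit.

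The substance is injectivity. Suppose $M$ is an embedded 0-foam with $\gamma^1_{0,R}([M])=0$, and write $A=\bigoplus_{s(b)=+}P_b$ and $B=\bigoplus_{s(b)=-}P_b$; the hypothesis says $[A]=[B]$ in $K_0(R)$, which means $A$ and $B$ are stably isomorphic, i.e.\ $A\oplus Q\cong B\oplus Q$ for some f.g.\ projective $R$-module $Q$. I would then build an explicit embedded null-cobordism $U\subset\R\times[0,1]$ with $\partial_0 U=M$ and $\partial_1 U=\emptyset$, read from the bottom up: using the exchange moves of Figure~\ref{fig8_009} reorder the points of $M$ so that all positive points are consecutive and all negative points are consecutive; merge all positive points into a single point $(+,A)$ and all negative points into a single point $(-,B)$ by successive trivalent vertices (each a planar tripod, with an arbitrarily chosen iterated direct-sum identification); insert a planar birth of an oppositely-oriented pair $(+,Q),(-,Q)$; reorder again and merge $(+,A)$ with $(+,Q)$ and $(-,B)$ with $(-,Q)$ to reach $(+,A\oplus Q)$ and $(-,B\oplus Q)$; finally annihilate this last pair by a planar arc carrying the flat connection given by the isomorphism $A\oplus Q\cong B\oplus Q$. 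Each step is a local planar modification, so $U$ embeds in $\R\times[0,1]$ and its top boundary is empty; hence $[M]=0$ in $\Cob^1_0(R)$ and $\gamma^1_{0,R}$ is injective.

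Naturality in $R$ is then formal: a ring homomorphism $R\to R'$ sends a decoration $P$ to $R'\otimes_R P$, inducing compatible maps $\Cob^1_0(R)\to\Cob^1_0(R')$ and $K_0(R)\to K_0(R')$, and the resulting square already commutes on the generating points. The one place needing genuine care is the injectivity argument: passing from ``$\sum_b s(b)[P_b]=0$ in $K_0(R)$'' to an honest stable isomorphism over an auxiliary $Q$, and checking that the whole sequence of moves used to kill $M$ is realizable by planar tripods, births, deaths and exchanges. I expect this to be the main (if not especially difficult) obstacle; everything else is formally parallel to Proposition~\ref{prop_iso_Groth}, and indeed the construction shows en route that the forgetful map $\for\colon\Cob^1_0(R)\to\Cob_0(R)$ is an isomorphism.
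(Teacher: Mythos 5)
Your proof is correct and spells out exactly what makes the paper's ``A proof is immediate'' true: the exchange cobordisms of Figure~\ref{fig8_009} mean the embedding imposes no extra constraint, so one reduces to (or redoes) the argument of Proposition~\ref{prop_iso_Groth}. Your closing observation that the construction also shows $\for : \Cob^1_0(R)\to\Cob_0(R)$ is an isomorphism (and hence gives the shortest route: $\Cob^1_0(R)\cong\Cob_0(R)\cong K_0(R)$) is precisely the intended shortcut.
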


A proof is immediate. $\square$

\vspace{0.07in} 

By analogy with the category $\COB_1(R)$ of 1-foam cobordisms one can define the category $\COB_1^1(R)$ of 1-foam cobordisms embedded in the plane. Objects of $\COB_1^1(R)$ are $R$-decorated 0-foams embedded in $\R$. Morphisms from $M_0$ to $M_1$ are $R$-decorated 1-foams $U$ embedded in $\R\times [0,1]$ with $\partial U\cap (\R\times \{i\})=M_i$ for $i=0,1$. Two such foams $U_1,U_2$ define equal  morphisms if they are isotopic rel boundary when viewed as embeddings of graphs in $\R\times [0,1]$, with all of the decorations preserved. 

Category $\COB_1^1(R)$ is a rigid monoidal category with a contravariant involution given by flipping an embedded 1-foam about the line $\R\times \{1/2\}$. The group $\Cob_0^1(R)$ is obtained by forming equivalence classes of objects of $\COB_1^1(R)$, with two objects equivalent if there is a morphism between them. 

%%%%%%%%%%%%%%%%%%
% 1-foams and abelian categories 
%%%%%%%%%%%%%%%%%%

\subsection{Grothendieck groups of abelian and exact categories via foams} 

%%%%%%%%%%%%%
%  A-decorated foams 
%%%%%%%%%%%%%

\subsubsection{$\mcA$-decorated one-foams and category $\COB_1(\mcA)$}\label{subsub_A_decorated}

Let $\mcA$ be a small abelian category. Its Grothendieck group $K_0(\mcA)$ is the abelian group with generators -- symbols $[X]$, for $X\in \Ob(\mcA)$, and defining relations $[X_2]=[X_1]+[X_3]$ for each short exact sequence 
\begin{equation}\label{eq_ses}
    0 \lra X_1 \lra X_2 \lra X_3 \lra 0 
\end{equation}
in $\mcA$. 

To interpret the group $K_0(\mcA)$,
define $\mcA$-decorated 0-foams and 1-foams similar to that for $R$-decorated foams. An $\mcA$-decorated 0-foam is a disjoint union of finitely many points $(b,s(b),X_b)$ each decorated by a sign and an object $X_b$ of $\mcA$. A closed $\mcA$-decorated 1-foam $U$ is a trivalent oriented graph as defined earlier, with an order of thin edges at each trivalent vertex. 

One-foam $U$ carries a flat $\mcA$-connections. This means that edges of $U$ are decorated by objects of $\mcA$, with circles decorated by objects of $\mcA$ additionally carrying a monodromy, thinking of it as an automorphism of the object when one goes around the circle once in the orientation direction. 
At each vertex $v$ of a 1-foam a short exact sequence \eqref{eq_ses} is fixed, with thin edges at $v$ carrying objects $X_1,X_3$ and the thick edge carrying object $X_2$. 
In addition, an order of thin edges at $v$ is fixed, which is indicated by a small arrow from the edge carrying $X_1$ to the edge carrying $X_3$, see Figure~\ref{fig8_010}.

\input{fig8_010}

\begin{remark}\label{rmk_explicit}
     One can use the Freyd-Mitchell embedding theorem 
    to realize $\mcA$ as a full subcategory of $R\dmod$, for some ring $R$,  
    via an exact functor $\mcF:\mcA\mapsto R\dmod$.  
    One can then define a flat connection over an edge or a circle  explicitly, using modules $\mcF(X)$ carrying the discrete topology, for various $X\in\Ob(\mcA)$. 
\end{remark}

An $\mcA$-decorated 1-foam $U$ with boundary  has an $\mcA$-decorated 1-foam $M$ as its boundary. At its inner points a 1-foam with boundary has the same local structure as a closed 1-foam. Decomposing the boundary into two connected components, $\partial U = M_1\sqcup (-M_0)$, allows us to view $U$ as a cobordism between $\mcA$-decorated 0-foams $M_0$ and $M_1$.

A useful equality of 1-foams, viewed as morphisms in the category $\COB_1(\mcA)$, is shown in Figure~\ref{fig2_005}, where a virtual crossing in the diagram of a 1-foam is created by a flip. Each of these foams can be viewed as a cobordism from $(+,X_1)\sqcup (+,X_3)$ to $(+,X_2)$, that is, from a pair of positively-oriented points labelled by $X_1$ and $X_3$ to a positively-oriented point labelled by $X_2$. 

\vspace{0.07in}

Form the category $\COB_1(\mcA)$ whose objects are oriented $\mcA$-decorated 0-foams, that is, finite collections of points with orientations and labelled by objects of $\mcA$. Cobordisms are $\mcA$-decorated 1-foams, modulo rel boundary isomorphism. 
Category $\COB_1(\mcA)$ is rigid symmetric monoidal. 

Cobordism group $\Cob_0(\mcA)$ 
consists of objects of $\COB_1(\mcA)$ modulo the cobordism relation (existence of a morphism between the objects). An $\mcA$-decorated 0-foam $M$ defines an element $[M]\in \Cob_0(\mcA)$, and cobordant foams define the same element. Same arguments as earlier establish the following result. 
\begin{prop}\label{prop_iso_A}
    There is a natural isomorphism 
    \begin{equation}
        \gamma_{0,\mcA} \ : \ \Cob_0(\mcA) \lra K_0(\mcA)
    \end{equation}
    between the cobordism group $\Cob_0(\mcA)$ and the Grothendieck group $K_0(\mcA)$ given by 
    \begin{equation}\label{eq_def_gamma_1}
      \gamma_{0,\mcA} ([M]) \ = \ \sum_{b\in M} s(b)[M_b],
\end{equation}
the signed sum of symbols of fibers of the 0-foam over all its points $b$. 
\end{prop}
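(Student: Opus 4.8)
\textbf{Proof plan for Proposition~\ref{prop_iso_A}.}

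The plan is to mirror the proof of Proposition~\ref{prop_iso_Groth} almost verbatim, replacing the direct sum decompositions at trivalent vertices by the short exact sequences \eqref{eq_ses} that now decorate them, and to produce an explicit inverse homomorphism in order to conclude that $\gamma_{0,\mcA}$ is an isomorphism rather than merely a well-defined map. First I would check that $\gamma_{0,\mcA}$ is well-defined on cobordism classes: if $U$ is an $\mcA$-decorated $1$-foam with $\partial U \cong M_1 \sqcup (-M_0)$, I need $\sum_{b \in M_1} s(b)[M_b] = \sum_{b \in M_0} s(b)[M_b]$ in $K_0(\mcA)$. As in the Grothendieck-group case, any such cobordism decomposes (up to the rel-boundary isomorphism relation defining morphisms in $\COB_1(\mcA)$) into a composition of elementary pieces — tripods (in and out vertices), cups, and caps — so it suffices to verify invariance under each. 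A cup or cap creates or annihilates a pair $(b,+,X)\sqcup(b,-,X)$, contributing $[X]-[X]=0$; an in vertex viewed as a cobordism from $(b_1,+,X_1)\sqcup(b_3,+,X_3)$ to $(b_2,+,X_2)$ carries the datum of a short exact sequence $0\to X_1\to X_2\to X_3\to 0$, and the relation $[X_2]=[X_1]+[X_3]$ is exactly a defining relation of $K_0(\mcA)$; an out vertex is handled by the same relation read in the opposite direction. Reversing the orientation of a point flips the sign $s(b)$, which is compatible with passing a module across a cup/cap. This shows $\gamma_{0,\mcA}$ descends to $\Cob_0(\mcA)$ and is a homomorphism of abelian groups (additivity under disjoint union is immediate, since $\gamma_{0,\mcA}$ is defined as a sum over points).

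Next I would construct the inverse. Define $\delta \colon K_0(\mcA) \to \Cob_0(\mcA)$ on generators by $\delta([X]) = [(b,+,X)]$, the class of a single positively oriented point decorated by $X$. To see this respects the defining relations, given a short exact sequence \eqref{eq_ses} the out vertex with that SES, together with its ordering of thin edges, is a cobordism from $(+,X_2)$ to $(+,X_1)\sqcup(+,X_3)$, exhibiting $[(+,X_2)] = [(+,X_1)] + [(+,X_3)]$ in $\Cob_0(\mcA)$; hence $\delta$ is a well-defined homomorphism. Now $\gamma_{0,\mcA}\circ\delta = \id$ on generators is clear from \eqref{eq_def_gamma_1}. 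For $\delta\circ\gamma_{0,\mcA} = \id$ on $\Cob_0(\mcA)$, I would argue that every $\mcA$-decorated $0$-foam $M$ is cobordant to $\bigsqcup_{b\in M,\, s(b)=+}(+,X_b)$ minus the negative points, and that a negative point $(b,-,X)$ is cobordant (using a cup) to the absence of that point once it is paired off — more precisely, $[M]$ in $\Cob_0(\mcA)$ depends only on $\gamma_{0,\mcA}([M])$ because any two $0$-foams with equal $\gamma_{0,\mcA}$-image are connected by a cobordism assembled from the elementary pieces realizing the relations used to pass between the corresponding expressions in $K_0(\mcA)$. This is precisely the content of the sentence ``Same arguments as earlier'' and parallels the last line of the proof of Proposition~\ref{prop_iso_Groth}.

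The only genuine subtlety — and the step I expect to require the most care — is the reduction of an arbitrary $\mcA$-decorated cobordism to a composition of tripods, cups, and caps, together with the verification that the ordering datum on thin edges (absent in the $R$-module case, where $\oplus$ is symmetric) does not obstruct this: one must check that swapping the two thin edges at a vertex, realized by the flip/virtual-crossing move of Figure~\ref{fig2_005}, is an equality of morphisms in $\COB_1(\mcA)$, so that the value of $\gamma_{0,\mcA}$ is genuinely independent of these choices. Granting the corresponding fact from Section~\ref{subsub_A_decorated} (the equality of $1$-foams in Figure~\ref{fig2_005}), the rest is a routine transcription of the Grothendieck-group argument, and the proposition follows.
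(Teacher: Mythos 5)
Your proof plan is correct and follows the paper's route: the proof the paper marks as ``same arguments as earlier'' for Proposition~\ref{prop_iso_A} is exactly the elementary-pieces decomposition from the proof of Proposition~\ref{prop_iso_Groth}, and your explicit inverse $\delta$ merely makes the concluding injectivity/surjectivity step more verbose without changing its substance.

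One point of confusion, though: you single out the thin-edge ordering and the move of Figure~\ref{fig2_005} as ``the only genuine subtlety,'' but it is no subtlety. Figure~\ref{fig2_005} does not swap the order of thin edges; both pictures there carry the same SES and the same order (from the $X_1$ edge to the $X_3$ edge), differing only by a choice of planar projection, which is irrelevant here because morphisms of $\COB_1(\mcA)$ are decorated graphs up to abstract rel-boundary isomorphism, not planar embeddings. And even if one did exchange the roles of the two thin edges at a vertex (replacing the SES $X_1\lra X_2\lra X_3$ by $X_3\lra X_2\lra X_1$), the invariant $[X_1]+[X_3]\in K_0(\mcA)$ is unchanged because the Grothendieck group is abelian. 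The ordering is meaningful data that distinguishes cobordisms, but it is invisible to $\gamma_{0,\mcA}$, so no extra verification is needed and this does not obstruct the decomposition argument.
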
 

Finally, consider a variation of this construction for $n$-foams embedded in $\R^{n+1}$, for $n=0,1$. For oriented decorated 1-foams there are two ways to order thin edges at a vertex $v$, the order shown in Figure~\ref{fig8_010} and the opposite order. Assume that an (oriented) 1-foam $U$ is embedded in $\R^2$. Both $U$ and $\R^2$ is oriented, and that gives a canonical order on thin edges at each vertex $v$. We assume that the order is the one shown in Figure~\ref{fig8_010}. The order is anticlockwise for an \emph{in} vertex and clockwise for an \emph{out} vertex, and foams with a vertex with an opposite order are not considered. Consequently, for 1-foams in $\R^2$ we do not need to explicitly indicate the order of thin edges. For an edge with two vertices in a planar 1-foam, with one \emph{in} vertex and one \emph{out} vertex, the order of thin edges at them is shown in the leftmost out of the four diagrams in the bottom row of Figure~\ref{fig2_006}. (For an edge from an \emph{in} to an \emph{out} vertex in a 1-foam which not embedded in the plane there are four orderings at the four pairs of thin edges at these vertices, see Figure~\ref{fig2_006} bottom row, while in the embedded foam case the order is the one on the left.)  

Category $\COB^1_1(\mcA)$ of planar $\mcA$-decorated 1-foams is defined analogously. Morphisms are planar $\mcA$-decorated 1-foams $U$ embedded in $\R^2\times [0,1]$. A morphism goes from the  $0$-foam $U\cap (\R^2\times \{0\})=\partial_0 U$ embedded in $\R$ to the $0$-foam $U\cap (\R^2\times \{1\})=\partial_1 U$. Category $\COB^1_1(\mcA)$ is rigid monoidal. Furthermore, for any objects $M_1,M_2$, objects $M_1\otimes M_2$ and $M_2\otimes M_1$ are isomorphic, via planar 1-foams that function as crossings, see Figure~\ref{fig2_006} on the left or Figure~\ref{fig8_009}. Given a 1-foam $U$, viewed as a morphism in $\COB_1(\mcA)$, it can be flattened into a planar foam in many ways and turned into a morphism in $\COB_1^1(\mcA)$.

\input{fig2_006}

\begin{prop}\label{prop_iso_B}
    There is a natural isomorphism 
    \begin{equation}
        \gamma^1_{0,\mcA} \ : \ \Cob^1_0(\mcA) \lra K_0(\mcA)
    \end{equation}
    between the cobordism group $\Cob^1_0(\mcA)$ and the Grothendieck group $K_0(\mcA)$ given by \eqref{eq_def_gamma_1}. 
\end{prop}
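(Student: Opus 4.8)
The plan is to reduce this to Proposition~\ref{prop_iso_A} via the forgetful map. Since the formula \eqref{eq_def_gamma_1} refers only to the underlying abstract $\mcA$-decorated $0$-foam and not to its embedding in $\R$, the assignment $\gamma^1_{0,\mcA}$ factors as the composite
\[
\Cob^1_0(\mcA) \ \xrightarrow{\ \for\ } \ \Cob_0(\mcA) \ \xrightarrow{\ \gamma_{0,\mcA}\ } \ K_0(\mcA),
\]
where $\for$ is the evident homomorphism forgetting the embedding of a $0$-foam into $\R$ and of a $1$-foam cobordism into $\R\times[0,1]$. Well-definedness of $\gamma^1_{0,\mcA}$ on cobordism classes is then automatic: a planar cobordism, with its embedding forgotten, is an abstract cobordism, hence already accounted for by $\gamma_{0,\mcA}$. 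As $\gamma_{0,\mcA}$ is an isomorphism by Proposition~\ref{prop_iso_A}, it remains to prove that $\for\colon \Cob^1_0(\mcA)\lra\Cob_0(\mcA)$ is an isomorphism, after which naturality in $\mcA$ for exact functors is inherited from that of $\gamma_{0,\mcA}$ and of $\for$.

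First I would check surjectivity of $\for$, which is clear: any finite $\mcA$-decorated $0$-foam embeds in $\R$ by choosing distinct real coordinates for its points, so every class in $\Cob_0(\mcA)$ is hit. For injectivity I would argue that the two cobordism relations coincide: given $0$-foams $M_0,M_1$ embedded in $\R$ and an \emph{abstract} $\mcA$-decorated $1$-foam $U$ with $\partial U\cong M_1\sqcup(-M_0)$, I want to produce a \emph{planar} such $1$-foam. To do so, embed $U$ generically in $\R\times[0,1]\times\R$ with $\partial_iU\subset\R\times\{i\}$ and project to $\R\times[0,1]$; the image is a planar $1$-foam with finitely many transverse virtual crossings, and each virtual crossing is replaced by an honest planar subfoam consisting of a merge followed by a split, using the canonical split short exact sequences $0\to X\to X\oplus Y\to Y\to 0$, exactly as in the top row of Figure~\ref{fig2_006} and in Figure~\ref{fig8_009}. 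This replacement leaves the boundary unchanged and respects the orientation-induced order of thin edges at vertices used for planar $1$-foams, so it yields a planar cobordism from $M_0$ to $M_1$, giving $[M_0]=[M_1]$ in $\Cob^1_0(\mcA)$. Hence $\for$, and therefore $\gamma^1_{0,\mcA}$, is an isomorphism.

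I do not expect a serious obstacle: the statement is the planar analogue of Proposition~\ref{prop_iso_A}, whose proof, like that of Proposition~\ref{prop_iso_Groth}, decomposes a cobordism into tripods realizing the relation $[X_2]=[X_1]+[X_3]$ of a short exact sequence and into cup/cap moves realizing $[X]-[X]=0$. The one point that genuinely needs attention is that in the plane the ambient orientation fixes the cyclic order of thin edges at each vertex (anticlockwise at \emph{in} vertices, clockwise at \emph{out} vertices), so one must confirm that the crossing-resolution gadgets and the elementary cobordisms can all be drawn consistently with this convention; this is exactly what Figures~\ref{fig8_009} and~\ref{fig2_006} record, and it is what makes the linear order of the points of a $0$-foam along $\R$ irrelevant up to planar cobordism. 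If one prefers to avoid the factorization, the same argument runs intrinsically: $\gamma^1_{0,\mcA}$ is a homomorphism by inspection of \eqref{eq_def_gamma_1}, it is surjective because a single positively-oriented point labelled $X$ maps to $[X]$ and such symbols generate $K_0(\mcA)$, and it is well-defined and injective by slicing an arbitrary planar cobordism with a height function into planar tripods, cups, caps, and crossings, each of which preserves the signed sum $\sum_b s(b)[X_b]$ in $K_0(\mcA)$ and conversely suffices to realize every defining relation of $K_0(\mcA)$.
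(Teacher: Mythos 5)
Your reduction of Proposition~\ref{prop_iso_B} to Proposition~\ref{prop_iso_A} via the forgetful map $\for\colon\Cob^1_0(\mcA)\to\Cob_0(\mcA)$ is sound and is, in essence, what the paper tacitly relies on; the paper states the result without a separate proof precisely because the flattening procedure for $\mcA$-decorated $1$-foams, described in the text immediately before the proposition, turns any abstract cobordism into a planar one. So the route you take is the paper's route.

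The one place where your argument is a shade too quick is in the injectivity of $\for$. A generic projection of an embedding $U\subset\R\times[0,1]\times\R$ to $\R\times[0,1]$ produces not only transverse virtual double points of edges, but also, at each original trivalent vertex of $U$, an order of thin edges that the ambient orientation of the plane imposes on you---and there is no reason for that forced order to agree with the thin-edge order that is part of the data of the abstract $\mcA$-decorated $1$-foam. Figures~\ref{fig8_009} and~\ref{fig2_006}, which you cite, take care of crossings; they do not repair a vertex whose two thin edges come out in the wrong cyclic order. The ingredient you need for that is the move of Figure~\ref{fig2_005}, which shows that a vertex with the ``wrong'' order is homeomorphic rel boundary to a vertex with the ``right'' order at the cost of introducing one more crossing, which is then resolved as in Figure~\ref{fig2_006}. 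Equivalently, you can avoid wrong-ordered vertices entirely by first rotating each vertex neighbourhood inside $\R^3$ so its projection has the correct anticlockwise (for \emph{in}) or clockwise (for \emph{out}) order before extending the embedding to the edges; any crossings this creates on edges are again resolved as you describe. Either patch closes the gap, and the rest of your argument---surjectivity of $\for$, the factorization of $\gamma^1_{0,\mcA}$, and naturality---is correct as written.
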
 
  
\input{fig2_005}

%%%%%%%%%%%%%%%
% exact categories 
%%%%%%%%%%%%%%%

\subsubsection{Extending to exact categories} 

The above constructions admit a straightforward extension to exact categories. We use the definition of the latter as in Srinivas~\cite{Sr}. All our categories are small. An \emph{exact category} is an additive category $\mcC$ embedded as a full additive subcategory of an abelian category $\mcA$, such that if 
\[
0\lra X_1\lra X_2 \lra X_3\lra 0
\]
is an exact sequence in $\mcA$ with $X_1,X_3\in \Ob(\mcA)$ then $X_2$ is isomorphic to an object of $\mcC$. An \emph{exact sequence} in $\mcC$ is an exact sequence in $\mcA$ whose terms lie in $\mcC$. Denote by $\mcE(\mcC)$ the set of exact sequences in $\mcC$. 
The Grothendieck group $K_0(\mcC)$ of an exact category is an abelian group with generators $[X]$ for $X\in \Ob(\mcC)$ and defining relations $[X_2]=[X_1]+[X_3]$ for any exact sequence $0\lra X_1\lra X_2 \lra X_3\lra 0$ in $\mcE(\mcC)$.  

\vspace{0.07in} 

The construction of the category of cobordisms above extends to any exact category $\mcC$ to yield the category $\COB_1(\mcC)$ of $\mcC$-decorated graph cobordisms. In a $\mcC$-decorated cobordism (a $\mcC$-decorated 1-foam) edges (including circles, if any) carry objects of $\mcC$ along with a flat connection along the edge. At each vertex of a cobordism there is a short exact sequence in $\mcE(\mcC)$ made of the fibers at the three edges by the vertex, see Figure~\ref{fig2_005} on the left. Cobordant objects and their equivalence classes are defined in the same way. Denote by $\Cob_0(\mcC)$ the cobordism group of $\mcC$-decorated 0-foams modulo cobordisms. The map $\gamma_{0,\mcC}$ below is defined as in (\ref{eq_def_gamma}). 

\begin{prop} \label{prop_iso2} This map is an isomorphism
\begin{equation}
    \gamma_{0,\mcC}  \ : \ \Cob_0(\mcC)\stackrel{\cong}{\lra} K_0(\mcC). 
\end{equation}
\end{prop}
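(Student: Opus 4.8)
The plan is to mimic the proof of Proposition~\ref{prop_iso_Groth} (and of Propositions~\ref{prop_iso_A}, \ref{prop_iso_B}), with short exact sequences in $\mcE(\mcC)$ playing the role that split direct sum decompositions play for projective $R$-modules. First I would show that the signed-symbol formula \eqref{eq_def_gamma}, now with $X_b\in\Ob(\mcC)$ in place of the projective modules, descends to a homomorphism $\gamma_{0,\mcC}\colon\Cob_0(\mcC)\to K_0(\mcC)$. Given a $\mcC$-decorated $1$-foam $U$ realizing a cobordism from $M_0$ to $M_1$ (flat $\mcC$-connections making sense inside the ambient abelian category in which $\mcC$ is embedded, cf. Remark~\ref{rmk_explicit}), pick a generic height function on $U$ and cut along regular level sets into a composite of elementary cobordisms: identity strands, births and deaths of pairs $(+,X)\sqcup(-,X)$ joined by a single edge carrying $X$, and in/out tripods $\tripod$. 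Along an identity strand or across a birth/death the flat connection identifies the relevant fibers, so the signed sum $\sum_b s(b)[X_b]$ is unchanged; across a tripod the attached exact sequence $0\to X_1\to X_2\to X_3\to 0$ yields precisely the defining relation $[X_2]=[X_1]+[X_3]$ in $K_0(\mcC)$, so again the signed sum is preserved (the ordering of thin edges and any monodromy around verticeless circles are invisible at the level of Grothendieck classes). Hence $\gamma_{0,\mcC}([M_0])=\gamma_{0,\mcC}([M_1])$, and additivity under disjoint union is immediate from the formula.

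For surjectivity, the $0$-foam consisting of a single positively-oriented point decorated by $X$ maps to $[X]$, and such classes generate $K_0(\mcC)$. For injectivity I would exhibit an inverse. Define $\delta\colon K_0(\mcC)\to\Cob_0(\mcC)$ on generators by $\delta([X])=[(+,X)]$; this is compatible with the relations of $K_0(\mcC)$ because, for an exact sequence $0\to X_1\to X_2\to X_3\to 0$ in $\mcE(\mcC)$, the in-tripod of Figure~\ref{fig8_010} (equivalently Figure~\ref{fig2_005}) is a cobordism from $(+,X_1)\sqcup(+,X_3)$ to $(+,X_2)$, giving $[(+,X_2)]=[(+,X_1)]+[(+,X_3)]$ in $\Cob_0(\mcC)$, so $\delta$ is a well-defined homomorphism. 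Since $(+,X)\sqcup(-,X)$ bounds an oriented interval carrying the trivial connection on $X$, we have $[(-,X)]=-[(+,X)]$, hence every class $[M]\in\Cob_0(\mcC)$ equals $\sum_{b\in M}s(b)[(+,X_b)]=\delta\bigl(\gamma_{0,\mcC}([M])\bigr)$, while $\gamma_{0,\mcC}\bigl(\delta([X])\bigr)=[X]$. Thus $\gamma_{0,\mcC}$ and $\delta$ are mutually inverse isomorphisms.

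The step I expect to require the most care is the decomposition in the first paragraph: showing that an arbitrary $\mcC$-decorated $1$-foam with boundary can be sliced by level sets of a generic height function into the listed elementary pieces, compatibly with all the decoration data — a Morse-theoretic handle argument adapted to trivalent graphs. Everything else is formal manipulation with the presentation of $K_0(\mcC)$. One could instead bypass the height function and argue by induction on the number of trivalent vertices of $U$, peeling off one tripod at a time using its exact sequence; but the slicing argument is cleaner to state and is the version one wants to reuse when analyzing $1$-foam cobordisms in the $K_1$ setting.
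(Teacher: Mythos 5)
Your argument is essentially the same as the paper's, which proves Proposition~\ref{prop_iso_Groth} by decomposing a $1$-foam cobordism into tripods, cups, and caps and matching each to a relation in $K_0$, and then invokes the same reasoning for the exact-category case. Your added explicit description of the inverse $\delta$ is a minor elaboration of the same idea, not a different route.
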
 

This follows via the same arguments as Proposition~\ref{prop_iso_Groth}. $\square$

\vspace{0.07in} 

When a morphism in $\COB_1(\mcC)$ is represented by a diagram, as in Figure~\ref{fig2_001}, the intersections that appear are \emph{virtual}, since the 1-foam is not embedded anywhere. 

\vspace{0.07in} 

Additionally, there is the category $\COB_0^1(\mcC)$ of planar $\mcC$-decorated 1-foams, defined by analogy with that for $\mcA$-decorated 1-foams in Section~\ref{subsub_A_decorated}. At a vertex of a planar $\mcC$-decorated foam the orders of thin edges are fixed to be as in Figure~\ref{fig8_010}. The cobordism group $\Cob_0^1(\mcC)$ is generated by symbols of $\mcC$-decorated $0$-foams embedded in $\R^1$, modulo cobordisms. This is an abelian group, due to the possibility to change the order of $\mcC$-points on a line via flattening of virtual intersections, as in Figure~\ref{fig2_006}. As before, an isomorphism 
\begin{equation}
    \Cob_0^1(\mcC) \ \cong \ K_0(\mcC)
\end{equation}
is immediate. 
Given a planar projection of an $\mcC$-decorated 1-foam, it can be converted into a planar 1-foam, by replacing the wrong orders of thin edges at vertices using variations on the Figure~\ref{fig2_005} move and then flattening all virtual intersections, as above. The resulting planar 1-foam and its cobordism class depends, in general, on the choice of a projection.  

%%%%%%%%%%%%%%%%%%
% braid-like foams 
%%%%%%%%%%%%%%%%%%

\subsection{Braid-like 1-foams and the Grothendieck semigroup}
\label{subsec_braid_like}

A version of the above construction  interprets  Grothendieck semigroups $K_0^+(R),K_0^+(\mcA)$, and $K_0^+(\mcC)$ via braid-like foams. 

For an abelian category $\mcA$ define $K_0^+(\mcA)$ as the abelian monoid with generators $[X]$, over $X\in \Ob(\mcA)$ and relations $[X_2]=[X_1]+[X_3]$ for each  exact sequence \eqref{eq_ses}. Minus signs are not introduced, and $[0]$, the symbol of the zero object, is the $0$ element in $K_0^+(\mcA)$. 
Thus, in this abelian semigroup relations come from short exact sequences but  subtraction is not allowed.
 
 On the cobordism side, the category $\COB_1(\mcA)$ should be replaced by the category $\COB_1^b(\mcA)$ of trivalent graph cobordisms with flat $\mcA$-connection, as above, but without critical points for the projection onto the $y$-axis (without local maxima and minima) and with the upward edge orientations only. 

We call these \emph{braid-like 1-foams} or just \emph{braid 1-foams}, since such 1-foams are analogous to braids. 
In a cobordism each interval is then oriented upward, from the source to the target $0$-foam. Objects of   $\COB_1^b(\mcA)$ are \emph{positive} $\mcA$-decorated $0$-foams, that is, $\mcA$-decorated $0$-foams with each point carrying the positive sign $+$. Figure~\ref{fig2_007} on the left shows a braid-like morphism in $\COB_1^b(\mcA)$ from an object $(M_1,M_2,M_3)$ to $(N_1,N_2,N_3,N_4)$. Morphisms are equal if braid-like foams representing them are homeomorphic rel boundary preserving all decorations.
Monoidal structure on $\COB_1^b(\mcA)$ is given by the disjoint union (placing $0$-foams or $1$-foams in parallel), and $\COB_1^b(\mcA)$ is a symmetric monoidal category. 

\input{fig2_007}

Two objects of $\COB_1^b(\mcA)$ are called \emph{cobordant} if there exists a morphism from one to the other.  Equivalence classes of objects under this relation constitute an abelian semigroup under the disjoint union. Denote this semigroup by $\Cob^b_0(\mcA)$ (superscript $b$ stands for \emph{braid}). 

\begin{prop}\label{prop_iso_plus}
For any abelian category $\mcA$ these two commutative monoids are  naturally isomorphic:
\begin{equation}\label{eq_iso_plus}
    \Cob^b_0(\mcA) \ \cong \ K_0^+(\mcA). 
\end{equation}
\end{prop}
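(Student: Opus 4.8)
The plan is to follow the template of Proposition~\ref{prop_iso_Groth}, now with no signs and with short exact sequences in place of direct sums, and then to supply an explicit inverse built out of trivalent vertices. First I would introduce the map $\gamma^{+}_{0,\mcA}:\Cob^b_0(\mcA)\lra K_0^+(\mcA)$ sending a positive $0$-foam $M$ to $\sum_{b\in M}[X_b]$, the sum of the symbols of the labels of its points (all signs are $+$, so none appear, in contrast with \eqref{eq_def_gamma_1}). It is manifestly additive for disjoint unions, hence a homomorphism of commutative monoids as soon as it is well defined, and manifestly surjective, since the one-point $0$-foam labelled by $X$ maps to $[X]$ and these symbols generate $K_0^+(\mcA)$; naturality in $\mcA$ is immediate from the formula.

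The content is the well-definedness. Given a braid $1$-foam $U$ from $M_0$ to $M_1$, I would exploit the fact that the height function on $U$ has no local maxima or minima, so that its only critical heights are those of the trivalent vertices; choosing a representative with the vertices at distinct heights, $U$ decomposes as a vertical composition of elementary braid $1$-foams, each (up to virtual crossings, which carry no data) an identity foam or a single \emph{in} or \emph{out} vertex. Slicing $U$ at a regular height gives a positive $0$-foam, and I would check that the class of this slice in $K_0^+(\mcA)$ is independent of the height: along an edge the label is constant up to the canonical isomorphism of the flat connection, and isomorphic objects have equal symbols in $K_0^+(\mcA)$ (apply the relation to a short exact sequence with a zero term); and crossing a trivalent vertex replaces a point labelled $X_2$ by two points labelled $X_1,X_3$, or vice versa, precisely along the short exact sequence $0\lra X_1\lra X_2\lra X_3\lra 0$ decorating that vertex, so the class of the slice changes by the defining relation $[X_2]=[X_1]+[X_3]$, that is, not at all. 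Comparing the bottom and top slices yields $\gamma^{+}_{0,\mcA}([M_0])=\gamma^{+}_{0,\mcA}([M_1])$. Finally the cobordism relation on objects of $\COB_1^b(\mcA)$ is symmetric --- flip a braid $1$-foam about a horizontal line and reverse all orientations, which interchanges \emph{in} and \emph{out} vertices and preserves the short exact sequence at each vertex --- hence an equivalence relation, so $\gamma^{+}_{0,\mcA}$ descends to $\Cob^b_0(\mcA)$.

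For injectivity I would construct an inverse from vertices. Presenting $K_0^+(\mcA)$ as the free commutative monoid on isomorphism classes of objects modulo the congruence generated by the short exact sequence relations, an equality $\sum_{b\in M_0}[X_b]=\sum_{b\in M_1}[X_b]$ in $K_0^+(\mcA)$ means that the multiset of labels of $M_0$ is transformed into that of $M_1$ by a finite chain of elementary moves, each either replacing an entry $X_2$ by a pair $X_1,X_3$ along a short exact sequence $0\lra X_1\lra X_2\lra X_3\lra 0$ or performing the reverse replacement (replacing an object by an isomorphic one being the special case of a sequence with a zero term). Realizing the $i$-th multiset by a positive $0$-foam $N_i$, with $N_0=M_0$ and $N_n=M_1$, each move is realized by an elementary braid $1$-foam --- an \emph{out} vertex decorated by the relevant short exact sequence for a move $X_2\mapsto(X_1,X_3)$, an \emph{in} vertex for its reverse, and an interval carrying the relevant isomorphism as its flat connection for an isomorphism move --- taken in parallel with the identity braid foam on the points not involved. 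Composing these elementary braid $1$-foams gives a morphism $N_0\to N_n$ in $\COB_1^b(\mcA)$, so $M_0$ and $M_1$ are cobordant; hence $\gamma^{+}_{0,\mcA}$ is injective and therefore an isomorphism of commutative monoids.

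The main obstacle I expect is the well-definedness step --- specifically, justifying in full the decomposition of an arbitrary braid $1$-foam into the elementary pieces above and carefully tracking the flat connection along edges during the slicing argument. Everything else is a routine translation between the two presentations, entirely parallel to the proof of Proposition~\ref{prop_iso_Groth}.
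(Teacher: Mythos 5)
Your proof follows the same route as the paper's: send a positive $0$-foam to the sum of symbols of its labels, check this is invariant under braid-like cobordisms (which reduce to trivalent vertices matching short exact sequence relations), observe surjectivity, and realize relations in $K_0^+(\mcA)$ as single-vertex braid cobordisms to get injectivity. You supply more detail on the slicing and the symmetry of the cobordism relation than the paper does, but the essential argument is the same.
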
 
\begin{proof}
Define a homomorphism $\psi:\Cob^b_0(\mcA)  \lra K_0^+(\mcA)$ taking a positive $\mcA$-decorated $0$-foam which consists of points labelled $M_1,\dots, M_n$ to $[M_1]+\dots +[M_n]\in K_0^+(\mcA)$. This map depends only on the image of the $0$-foam in $\Cob^b_0(\mcA)$, consequently $\psi$ is well-defined. Map $\psi$ is surjective. Its  injectivity follows since defining relations in $K_0^+(\mcA)$ on the symbols $[M_i]$ come from short exact sequences, which correspond  to braid-like cobordisms with a single vertex of merging lines labelled $M_1$ and $M_3$ 
to a line labelled $M_2$, for a short exact sequence $M_1\lra M_2\lra M_3$ in $\mcA$.
\end{proof} 

It is easy to extend this construction from an abelian category $\mcA$ to an exact category $\mcC$ or to an additive category where the data of a short exact sequence at a vertex is replaced by a direct sum decomposition, as for 1-foams in Section~\ref{subsec_Groth}. For planar braid-like foams isomorphism analogous to \eqref{eq_iso_plus} also holds.

\vspace{0.07in} 
 
At the top and bottom of an $\mcA$-decorated braid 1-foam $U$ there are sequences $M_1,\dots, M_m$ and $N_1,\dots, N_n$ of objects of $\mcA$. 
We say that $U$ is \emph{matched} if $m=n$ and $M_i\cong N_i$, $1\le i\le n$, with isomorphisms fixed. One can then close $U$ into a closed 1-foam via these automorphisms, see Figure~\ref{fig2_007} on the right. Such closure operation works for planar 1-foams and for $\mcC$-decorated 1-foams as well.

\vspace{0.1in} 

\begin{remark} 
It is useful to introduce   $C^{\infty}$-structures on 1-foams and higher-dimensional foams. For this, one can use the usual definition of a smooth one-manifold via charts, and additionally require that a chart at a vertex $v$ consists of compatible charts on the two open segments at $v$ given by removing the interior of one of the thin intervals at $v$. That is, a neighbourhood of $v$ is given by taking two open intervals $(-1,1)$ and identifying pairs of points with coordinates $x<0$ on them. The resulting topological space is not Hausdorff and the analogue of $v$ is the pair of points $0$ on the two intervals that are not identified. Given such smooth structure, a function on a 1-foam $U$ is a $C^{\infty}$-function if it restricts to a smooth function on each open interval (and circle) of $U$ and to a smooth function at each vertex of $v$, that is, its restrictions to segments at $v$ are smooth. We refer to Viro~\cite{Vi,Vi2} for an informal discussion of differentiable structures on singular spaces and to~\cite{Sik67,Sta11,BH11,IZ13} and references there for a detailed theory, see also the use and discussion of  smooth 2-foams in~\cite{QW22,IK24_SAF}.

The two thin edges at $v$ can be depicted as infinitesimally close (tangent to each other) at $v$. A similar convention is used in~\cite{RW2} and~\cite{IK24_SAF}. 
\end{remark}

%%%%%%%%%%%%%%%%%%%%
%
% 2D foam cobordisms
%
%%%%%%%%%%%%%%%%%%%%

\section{\texorpdfstring{$K_1$}{K1} and two-dimensional foam cobordisms: motivations and definitions} 
\label{sec_K1} 

%%%%%%%%%%%%%%%%%%%
% K1 of a ring 
%%%%%%%%%%%%%%%%%%%

\subsection{\texorpdfstring{$K_1$}{K1} of a ring \texorpdfstring{$R$}{R} and cobordisms between 1-foams: a motivation}
$\quad$

To further motivate the correspondence between algebraic K-theory groups and cobordism groups of foams, we go one dimension up and discuss decorated 1-foams modulo cobordisms. 

\vspace{0.07in}

{\it Interpreting generators and relations in $K_1$ of a ring.}
Let us explore an extension of the above relation one dimension up, first for the $K_1$ group of a ring $R$. One way to define $K_1(R)$ is as the group generated by symbols $[(P,\alpha)]$ for pairs $(P,\alpha)$ of a finitely-generated projective (left) $R$-module $P$ and an automorphism $\alpha$ of $P$. These symbols satisfy the following relations, see~\cite{Ro}, which are the defining relations in $K_1(R)$: 
\begin{enumerate}
    \item $[(P,\alpha)]+[(P,\beta)] = [(P,\alpha\beta)]$, 
    \item For a commutative diagram with short exact sequences in rows
\[
\begin{CD}
    0 @>>> P_1 @>>> P_2 @>>> P_3 @>>> 0 \\
    @.     @VV{\alpha_1}V  @VV{\alpha_2}V  @VV{\alpha_3}V  @. \\
    0 @>>> P_1 @>>> P_2 @>>> P_3 @>>> 0
\end{CD}
\]
and $\alpha_i\in \Aut(P_i)$, 
\[ [(P_2,\alpha_2)] \ = \ [(P_1,\alpha_1)] + [(P_3,\alpha_3)].
\] 
\end{enumerate}

\input{fig3_001}

To match these defining relations to foams, assign to $(P,\alpha)$ an oriented circle $\SS^1$ with a locally-trivial bundle over it with fibers $P$ and monodromy $\alpha$, see Figure~\ref{fig3_001} on the left. 

 A basepoint $b_0\in \SS^1$ and an isomorphism between the fiber at $b_0$ and $P$ can be fixed. The monodromy along the circle starting at $b_0$ in the orientation direction is $\alpha$.  Changing the basepoint conjugates automorphism $\alpha$. Denote this decorated circle by $\SS^1(P,\alpha)$ or simply $\SS^1(\alpha)$. 
If the monodromy $\alpha=\id_P$ is the identity, the circle with such decoration bounds a disk with the trivial $P$-bundle over it, see Figure~\ref{fig3_001}. Of course, $[(P,\id_P)]=0$ in $K_1(R)$, by substituting $\id_P$ for $\beta$ in the equation (1) above. 

Note that there is a cobordism (a \emph{tube} or \emph{annulus} cobordism) from the empty one-manifold to the union of circles decorated by $(P,\alpha)$ and $(P,\alpha^{-1})$, respectively, see Figure~\ref{fig3_001} on the right. That the union of these circles bounds a 2-foam (in fact, bounds an oriented surface) corresponds to the relation $[(P,\alpha)]+[(P,\alpha^{-1})]=0$. 

\input{fig3_002}

\vspace{0.07in} 

{\it Cobordism for the relation (1).}
To interpret relation (1) in full generality, we can form the pants cobordism from two circles, decorated by $P$ with monodromies $\alpha$ and $\beta$, respectively, to one circle, decorated by $(P, \alpha\beta)$, see Figure~\ref{fig3_002}. For more precision, one can indicate how the basepoints merge along the cobordism. 

\input{fig3_003}

Relatedly, here are two ways to see that a circle with the monodromy the commutator $[\alpha,\beta]=\alpha\beta\alpha^{-1}\beta^{-1}$ is null-cobordant: 
\begin{itemize}
    \item Iterating the pants cobordism, split the circle $\SS^1(\alpha\beta\alpha^{-1}\beta^{-1})$ into four circles $\SS^1(\alpha)$, $\SS^1(\beta)$, $\SS^1(\alpha^{-1})$, $\SS^1(\beta^{-1})$, then couple  $\SS^1(\alpha)$ to $\SS^1(\alpha^{-1})$  and $\SS^1(\beta)$ to $\SS^1(\beta^{-1})$ via oriented annuli. This gives a one-holed decorated genus two surface with boundary $\SS^1(\alpha\beta\alpha^{-1}\beta^{-1})$, see Figure~\ref{fig3_003} left. 
    \item Bound $\SS^1(\alpha\beta\alpha^{-1}\beta^{-1})$ by a one-holed torus with meridian and longitude monodromies $\alpha$ and $\beta$, respectively, see Figure~\ref{fig3_003} right. 
\end{itemize}
Consequently, any $\SS^1(\alpha)$ with the monodromy $\alpha\in [\Aut(P),\Aut(P)]$ in the commutator subgroup is null-cobordant. 

\vspace{0.1in} 

{\it Cobordism for the relation (2).}
We next move to the relation (2) above. Decorated circles $\SS^1(\alpha_1)$ and $\SS^1(\alpha_3)$ with fibers $P_1$ and $P_3$ and monodromies $\alpha_1$ and $\alpha_3$, respectively, can be merged into the circle  $\SS^1(\alpha_2)$ with the fiber $P_2$ and monodromy $\alpha_2$, see Figure~\ref{fig3_004}.

\input{fig3_004}

The resulting cobordism (an oriented 2-foam) is topologically the product of the tripod $\tripod$, as defined in Section~\ref{subsubsec_decorated}, also see Figure~\ref{fig8_010}, and the circle $\SS^1$, and has a circular seam. Along the seam we choose the thin facet for $P_1$, respectively for $P_3$,  to be the first, respectively the second, in the order of the two thin facets, analogous to the order of thin edges at a vertex of a 1-foam. One way to think of the 2-foam  in Figure~\ref{fig3_004} is by starting with the direct product $\tripod\times [0,1]$ equipped with the trivial bundles with fibers $P_1,P_2, P_3$ on the three facets and then identifying $(x,0)$ and $(x,1)$ for $x\in \tripod$ and identifying fibers over $(x,0)$ and  $(x,1)$ via automorphisms $\alpha_i$, $i=1,2,3$, respectively, see Figure~\ref{fig3_005}. 

\input{fig3_005}

This cobordism appears to be a 2-foam with boundary, and we call it \emph{the  circular tripod cobordism or circular tripod 2-foam}. It goes from the union of circles $\SS(\alpha_1)\sqcup \SS(\alpha_3)$ to $\SS(\alpha_2)$ and can be viewed as a foam cobordism interpretation of the relation (2). 

The boundary of this cobordism is the union of three circles. When considering general 2-foams one clearly should allow boundaries that are general 1-foams (decorated oriented trivalent graphs). For instance, a generic cross-section of the cobordism in Figures~\ref{fig3_004} and~\ref{fig3_005} may transversally intersect the seam circle two or more times, giving a 1-foam with vertices as the boundary. 

Thus, both types of defining relations (1) and (2) above can be interpreted via 2-foam cobordisms between 1-foams. Next, we provide a careful definition of 2-foams and discuss their possible decorations. 

%%%%%%%%%%%%%%%%%%%
% Two-foams and cobordisms 
%%%%%%%%%%%%%%%%%%%

\subsection{Two-foams and their decorations}
\label{subsec_2_cobs}
$\quad$

{\it Closed 2-foams.}
Define a (closed oriented) 2-foam to be a finite union of compact oriented surfaces along common boundaries with two types of singular points: seam points that lie on seam lines, as shown in Figure~\ref{fig3_006} in the middle, and vertices where four seam lines meet, see Figure~\ref{fig3_006} on the right. 

\input{fig3_006}

In more detail, a \emph{closed 2-foam} is a finite combinatorial CW-complex $F$, where each point is one of the three types and has a neighbourhood as depicted in Figure~\ref{fig3_006}. 
Points of the second type are \emph{seam points} and points of the third \emph{vertices}. The union of points of the second and third type constitutes the set $s(F)$ of singular points of $F$. It is a four-valent graph which may have closed circles. Connected components of the complement $F\setminus s(F)$ are called \emph{facets}.

\input{fig3_007}

Along a seam of a foam two adjacent facets are designated as \emph{thin}. We distinguish two types of 2-foams, depending on whether or not the two thin facets along each seam  are ordered. Thin facets and the thick facet along a seam $t$ are shown in  Figure~\ref{fig3_007}, with the order of thin facets (if any) indicated by a small arrow from the first to the second thin facet. It can also be visualized as the decoration in Figure~\ref{fig8_010}
times an interval (and ignoring objects $X_i$'s and the SES, for now).

Facets of a 2-foam are oriented. Along a seam, orientation of each thin facet must match the orientation of the thick facet, see Figure~\ref{fig3_007} in the middle. Orientations of the two thin facets are then the opposite. 
A facet that is thin at a seam might be the thick facet at a different seam. 

\input{fig3_008}

Decorations of the four seams near a vertex of a 2-foam must match as shown in Figure~\ref{fig3_008}. If keeping track of thin facet ordering along seams, these orderings near a vertex must match as in Figure~\ref{fig3_008} on the left. Orientations of the six facets at a vertex must match 
along the four seams, resulting in the orientations in Figure~\ref{fig3_008} on the right or in the opposite set of orientations. 
Figure~\ref{fig3_009} shows a set of three parallel cross-sections of a foam near a vertex, with one of the cross-sections going through the vertex.
Figure~\ref{fig3_010} depicts the link of a vertex and restrictions of the decorations  (facet orientations and thin facet orderings along seams) to the link. 

\input{fig3_009}

\input{fig3_010}

Let us summarize types of 2-foams considered in the present paper:  
\begin{itemize}
    \item 2-foams are always oriented: facets are oriented, with the orientations compatible along all seams, as discussed earlier (unoriented 2-foams appear only in Remark~\ref{rmk_unoriented}). 
    \item Thin facets along seams may or may not be ordered. One can refer to these two types of 2-foams as \emph{thin-ordered} and \emph{thin-unordered}, respectively. 
\item 2-foams $F$ embedded in $\R^3$ are considered as well. Such an embedded foam $F$ has a canonical thin-ordering from orientations of its facets and the orientation of the ambient $\R^3$. This ordering is shown in Figure~\ref{fig6_001} on the right. It is also determined by taking the canonical ordering of thin edges at vertices of a 1-foams embedded in $\R^2$, as in 
Figure~\ref{fig6_001} on the left (also see Figure~\ref{fig8_010}), and multiplying by the interval. 
\end{itemize}

\input{fig6_001}

Thus, we can consider the following four types of (closed, oriented) 2-foams: 
\begin{enumerate}
    \item\label{item:thin_ordered} Thin-ordered 2-foams, 
    \item\label{item:thin_unordered} Thin-unordered 2-foams, 
    \item\label{item:two_foams_thin_ordering} 2-foams embedded in $\R^3$, which come with a canonical thin-ordering, 
    \item\label{item:two_foams_thin_unordered} 2-foams embedded in $\R^3$, where thin facets at seams are unordered.
\end{enumerate}

\vspace{0.07in} 

{\it 2-Foams with boundary.}
(Oriented) 2-foams with boundary are defined similarly. The boundary of a 2-foam is a 1-foam, as defined earlier. Near its boundary $\partial F$ a 2-foam $F$ is homeomorphic to the direct product $\partial F\times [0,1)$. Decorations of the 2-foam (facet orientations and thin-orderings, when included) restrict to the corresponding decorations (edge orientations and thin-orderings at vertices) of its boundary. 

Two-foams $F$ where the boundary is split into two disjoint components, $\partial F  \cong \partial_0 F \sqcup \partial_1 F$, can be viewed as cobordisms between 1-foams $\partial_0 F $ and $\partial_1 F$. We use the standard induced orientation convention for the boundary of a cobordism between manifolds, viewing facets of a 2-foam as oriented 2-manifolds so that, with orientations, 
\begin{equation}\label{eq_boundary} 
    \partial F  \cong (-\partial_0 F) \sqcup \partial_1 F.
\end{equation} 

\vspace{0.07in} 

{\it $R$-decorated 2-foams.} For a ring $R$ define $R$-decorated 2-foams $F$ by analogy with $R$-decorated 1-foams, that is, as 2-foams that carry flat bundles of finitely-generated projective (left) $R$-modules on facets, direct sum decompositions of these projective modules along seams, which are compatible at vertices where quadruples of seams meet. 

Each facet $h$ of $F$ carries a flat bundle of finitely-generated projective (left) $R$-modules. Picking a base point $p_0$ of $h$, the fiber over $p_0$ is a projective module $P$ and monodromies along various paths in $h$ give a homomorphism $\pi_1(h,p_0)\lra \Aut(P)$. Vice versa, an instance of such data allows to equip a facet with a flat bundle with $P$ the fiber over $p_0$. 

\input{fig3_011}

Next, consider a seam $t$ of $F$. Pick a triple of points $p_1,p_2,p_{12}$ on three facets of $F$ near $t$, with $p_1,p_2$ on the thin facets at $t$ and $p_{12}$ on the thick facet, see Figure~\ref{fig3_011} on the left. Choose an embedded tripod with the middle vertex on $t$ and endpoints $p_1,p_2,p_{12}$, see Figure~\ref{fig3_011} in the middle. To such an embedded tripod there is associated an  isomorphism 
\begin{equation}\label{eq_dir_sum}
P_{12}\cong P_1\oplus P_2.
\end{equation}
These isomorphisms must be compatible with flat connections on these facets. In particular, deforming the tripod along the seam as in Figure~\ref{fig3_011} results in a commutative diagram of isomorphisms 
\begin{equation}\label{eq_comm_flat}
\begin{CD}
P_{12} @>{\cong}>> P_1\oplus P_2\\
@VV{\cong}V @VV{\cong}V\\
P'_{12} @>{\cong}>> P'_1\oplus P'_2
\end{CD}
\end{equation} 
Vertical isomorphisms in \eqref{eq_comm_flat} come from flat connections along the paths between pairs of points $(p_1,p_1')$, $(p_2,p_2')$, $(p_{12},p_{12}')$ shown by vertical dotted lines in Figure~\ref{fig3_011} on the right. Horizontal isomorphisms in \eqref{eq_comm_flat} come from isomorphisms for the two tripods in that figure. 

If one of the legs of a tripod in Figure~\ref{fig3_011} in the middle is modified by an element of $\pi_1$ of the corresponding facet that can be represented by a simple loop, the isomorphism \eqref{eq_dir_sum} is twisted by the corresponding isomorphism of $P_1,P_2$ or $P_{12}$. 

\vspace{0.07in}

Next, consider the structure of a flat connection near a vertex $v$ of the foam. There are four seams at a vertex and six areas of facets. Pick a base point at each and choose a system of four tripods, one for each seam, see Figure~\ref{fig3_012}. 

\input{fig3_012}

\input{fig6_002}

Facets of the foam near $v$ carry projective modules $P_1$, $P_2$, $P_3$, $P_{12}$, $P_{13}$, $P_{123}$, which are the fibers of the flat connection over the corresponding points $p_1$, $p_2$, $p_3$, $p_{12}$, $p_{13}$, $p_{23}$, $p_{123}$, also see Figure~\ref{fig6_002}. The four seams near the vertex fix isomorphisms 
 \begin{equation}\label{eq_iso_fix}
 P_{12}\cong P_1\oplus P_2, \qquad P_{23}\cong P_2\oplus P_3, \qquad
 P_{123}\cong P_1\oplus P_{23}, \qquad P_{123}\cong P_{12}\oplus P_3.
 \end{equation} 
 These isomorphisms are required to be compatible, that is, the  following diagram commutes: 
 \begin{equation}\label{eq_cd_proj_2}
 \begin{CD}
P_1\oplus P_2\oplus P_3 @>\cong>> P_{12}\oplus P_3\\
@VV{\cong}V @VV{\cong}V\\
P_1\oplus P_{23} @>{\cong}>> P_{123}
\end{CD}
\end{equation} 

This completes our description of a flat connection on a 2-foam by finitely-generated projective $R$-modules and direct sum decompositions at seams and vertices. This matches foams of type~\eqref{item:thin_unordered}. We refer to such foams as \emph{$R$-decorated 2-foams}. 

 $R$-decorated 2-foams with boundary are defined similarly. $R$-decoration of a 2-foam $F$ with boundary induces an $R$-decoration of its boundary $\partial F$. 

\vspace{0.07in} 

Two $R$-decorated closed 1-foams $U_0,U_1$ are called \emph{cobordant} or \emph{bordant} if there exists an $R$-decorated 2-foam $F$ with $\partial F \cong U_1\sqcup (-U_0). $
Denote by $\Cob_1(R) $ the set of cobordism classes of $R$-decorated (closed) 1-foams. 

\begin{prop} $\Cob_1(R)$ is naturally an abelian group with the addition given by the disjoint union of decorated 1-foams and the minus map $x\mapsto -x$ given by the orientation reversal of 1-foams. 
 \end{prop}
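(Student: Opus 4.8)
The plan is to promote the arguments used for $\Cob_0(R)$ (Proposition~\ref{prop_iso_Groth} and the discussion preceding it) one dimension up. First I would check that being cobordant is an equivalence relation on $R$-decorated closed $1$-foams, so that $\Cob_1(R)$ is a bona fide set. Reflexivity and symmetry will follow from the cylinder construction below and from reversing the orientation of a $2$-foam; transitivity will follow by gluing two $R$-decorated $2$-foams along a common boundary $1$-foam. For the gluing step I would use that near its boundary a $2$-foam is a product $\partial F\times[0,1)$, so the glued space is again a $2$-foam with the prescribed local models, and the two $R$-decorations agree along the glued $1$-foam because each of them restricts to that $1$-foam's own $R$-decoration (flat bundles on facets, direct sum decompositions at seams, compatibility at vertices).

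Next I would observe that disjoint union of $R$-decorated $2$-foams shows that $\sqcup$ descends to a well-defined binary operation on $\Cob_1(R)$, which is associative and commutative (inherited from disjoint union of the underlying spaces) and has the empty $1$-foam as a two-sided unit. I would also check that the minus map is well defined on cobordism classes: reversing every facet orientation of a cobordism $F$ from $U_0$ to $U_1$ (so that $\partial F\cong U_1\sqcup(-U_0)$ by the convention \eqref{eq_boundary}) produces a cobordism from $-U_0$ to $-U_1$.

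The one substantive point, though not genuinely hard, is exhibiting the inverse, that is, showing $U\sqcup(-U)$ is null-cobordant. The candidate is $F=U\times[0,1]$, whose facets are $e\times[0,1]$ for the edges $e$ of $U$ and whose seams are the intervals $v\times[0,1]$ for the vertices $v$ of $U$ (there are no $2$-foam vertices). Orient $e\times[0,1]$ by the product of the orientation of $e$ with the standard orientation of $[0,1]$; I would verify (i) that these orientations are compatible along each seam $v\times[0,1]$, which reduces to the orientation compatibility already present at the vertex $v$ of $U$; (ii) that decorating each facet by the flat $R$-bundle pulled back from $e$ and each seam by the direct sum decomposition at $v$, all constant in the $[0,1]$-direction, yields a genuine $R$-decorated (thin-unordered) $2$-foam, the compatibility squares \eqref{eq_comm_flat} commuting trivially because nothing depends on the $[0,1]$-coordinate; and (iii) that, with the boundary-orientation convention \eqref{eq_boundary}, $\partial F=(U\times\{1\})\sqcup(-(U\times\{0\}))\cong U\sqcup(-U)$. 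Then $[U]+[-U]=[U\sqcup(-U)]=0$ in $\Cob_1(R)$, which completes the proof. The only place I would tread carefully is matching orientation conventions so that the two ends of the cylinder really carry $U$ and $-U$ rather than two copies of $U$; everything else is bookkeeping identical to the $0$-foam case.
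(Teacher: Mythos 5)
Your argument is correct and follows the same approach as the paper: the paper's proof also notes that disjoint union gives the additive/associative structure and exhibits $U\times[0,1]$ as a null-cobordism for $U\sqcup(-U)$. You simply spell out more of the routine verifications (that cobordance is an equivalence relation, that orientations and flat structures on the cylinder are consistent, and that the boundary convention produces $U\sqcup(-U)$) which the paper leaves implicit.
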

 
\begin{proof} Additivity and associativity properties of the disjoint union operation are clear. For any decorated 1-foam $U$ one-foam $U\sqcup (-U)$ is the boundary of decorated two-foam $U\times [0,1]$. 
\end{proof}

{\it $\mcA$-decorated 2-foams.} We now describe how to decorate a 2-foam by a flat connection given an abelian category $\mcA$. These foams are of type~\eqref{item:thin_ordered}, with thin facets ordered along each seam. 

\vspace{0.07in}

A flat $\mcA$-connection on a 2-foam $F$ is given by a flat connection on each facet $h$ of $F$. Picking a point $p_0$ of $h$ and a fiber $X\in \Ob(\mcA)$ over that point, a flat connection is a homomorphism $\pi_1(h,p_0)\lra \Aut_{\mcA}(X)$. 

Along each seam that are short exact sequences of objects of $\mcA$. Given a tripod as in Figure~\ref{fig3_011} middle, with the order of thin facets from $p_1$ to $p_2$, there is a short 
 exact sequence 
\begin{equation} \label{eq_SES_X} 0 \lra X_1 \lra X_{12} \lra X_2 \lra 0
\end{equation} 
where $X_1,X_2,X_{12}\in \Ob(\mcA)$ are the fibers of the connection over points $p_1,p_2,p_{12}$, respectively. Modifying the tripod by extending on the legs by a simple path in its facet replaces the corresponding term of the exact sequence by the fiber over the target point of the path.  

Deforming a tripod as in Figure~\ref{fig3_011} on the right, 
flat connections on the three facets give isomorphisms between corresponding fibers that make the following diagram commute: 
\begin{equation}\label{eq_cd_2}
\begin{CD}
    0 @>>> X_1 @>>> X_{12} @>>> X_2 @>>> 0 \\
    @.     @VVV  @VVV  @VVV  @. \\
    0 @>>> X_1' @>>> X_{12}' @>>> X_2' @>>> 0
\end{CD}
\end{equation}
where the horizontal SESs are associated with the two tripods in Figure~\ref{fig3_011} on the right.

\vspace{0.1in} 

\input{fig3_023}

At a vertex $v$ of a foam $F$ four seams meet. Choose basepoints $p_1$, $p_2$, $p_3$, $p_{12}$, $p_{23}$, $p_{123}$ and four tripods to fix the four short exact sequences at these seams: 
\begin{equation}
\begin{split}
    X_1 \lra X_{12}\lra X_2, \hspace{1cm} &X_2\lra X_{23}\lra X_3, \\  
    X_1\lra X_{123}\lra X_{23}, \hspace{1cm} &X_{12}\lra X_{123}\lra X_3,
    \end{split}
\end{equation}
also see Figure~\ref{fig3_012}. 
A link of a vertex $v$ that contains all the six basepoints is shown in Figure~\ref{fig3_023} in two possible ways and for one particular orientation of the foam
near $v$. Small arrows at vertices of the link indicate orders of thin facets. 

\vspace{0.07in} 

We impose natural compatibility conditions at these seams on the eight maps in the sequences: 
\begin{itemize}
    \item Composition $X_1\lra X_{12}\lra X_{123}$ equals $X_1\lra X_{123}$, 
    \item Composition $X_{123}\lra X_{23}\lra X_3$ equals $X_{123}\lra X_3$, 
    \item Composition $X_{12}\lra X_2 \lra X_{23}$ equals $X_{12}\lra X_{123}\lra X_{23}$, 
\end{itemize}
Equivalently, we could require a 2-step filtration on $X_{123}$, 
\[ 0 \subset X'\subset X''\subset X_{123}
\]
and compatible isomorphisms between the corresponding subquotients and $X_1$, $X_2$, $X_3$, $X_{12}$, $X_{23}$: 

\begin{equation}
    X'\cong X_1, \quad  X''\cong X_{12},  \quad 
    X''/X'\cong X_2,\quad  
    X_{123}/X''\cong X_3, \quad  
    X_{123}/X'\cong X_{23}
\end{equation}
resulting in the four exact sequences above. These objects and maps can be drawn compactly 
via the triangular diagram below
\begin{center}
\begin{tikzcd}
 &   & X_3  \\
 & X_2 \arrow[hookrightarrow]{r}   & X_{23} \arrow[twoheadrightarrow]{u}
\\
X_1 \arrow[hookrightarrow]{r} & X_{12} \arrow[twoheadrightarrow]{u} \arrow[hookrightarrow]{r} & X_{123} \arrow[twoheadrightarrow]{u}
\end{tikzcd}
\end{center} 
with the four short exact sequences above formed via horizontal inclusions (and their composition) and vertical surjections (and their composition). This diagram and its generalizations are a familiar staple in algebraic K-theory~\cite{Sr,We}.

The data of a flat connection near a vertex should be compatible with deformations of tripods as in Figure~\ref{fig3_011} on the right and moving base points in Figure~\ref{fig3_012} along the facets.  
This completes our definition of an $\mcA$-decorated 2-foam.

It is straightforward to extend this construction to 2-foams with boundary. One obtains the monoidal category $\COB_1(\mcA)$ with objects given by $\mcA$-decorated 1-foams and cobordisms given by isomorphism classes of $\mcA$-decorated 2-foam cobordisms between them. 
Cobordism classes of $\mcA$-decorated 1-foams constitute an abelian group denoted $\Cob_1(\mcA)$. 

\vspace{0.07in} 

Replacing an abelian category $\mcA$ by an exact category $\mcC$ gives the notion of a 2-foam with a flat connection over $\mcC$ and the corresponding abelian group $\Cob_1(\mcC)$ of cobordism classes of $\mcC$-decorated 1-foams.

\begin{remark}\label{rmk_zero_object}
Edges of 1-foams and facets of 2-foams may be decorated by the zero object of $R\mathrm{-pmod}$, $\mcA$ or $\mcC$. We do not simply delete such edges and facets but manipulate them via cobordisms just like facets decorated by nonzero objects of these categories. 
\end{remark}

%%%%%%%%%%%%%%%%%%
%
% K1 and cobordisms of 1-foams
%
%%%%%%%%%%%%%%%%%%

\section{\texorpdfstring{$K_1$}{K1} and cobordisms between planar 1-foams}\label{sec_K1_cobordisms}

%%%%%%%%%%%%%%%%%
% Mapping cobordism classes 
%%%%%%%%%%%%%%%%%

\subsection{Cobordism classes of planar one-foams and  \texorpdfstring{$K_1$}{K1}}\label{subsec_cob_classes}

\quad

{\it Invariant $\widetilde{f}$ of a planar 1-foam.}
Fix a ring $R$ and 
let $\mcC_R=R\mathrm{-pmod}$ be the category of finitely-generated projective left $R$-modules. 
Consider a $\mcC_R$-decorated planar 1-foam $U\subset \R^2$. Edges $u$ of $U$ carry flat connections with finitely generated projective $R$-modules $P_b$  as fibers (here $b$ is a point on the edge $u$). At vertices of the planar network  direct sum decompositions are fixed, $P_2\cong P_1\oplus P_3$, where $P_2$, respectively $P_1,P_3$, is the fiber at a point $b_2$, respectively $b_1,b_3$, of the thick edge, respectively  thin edges near  the vertex. (In this setup we are replacing short exact sequences by direct sum decompositions.) Since we are working with direct sum decompositions rather than short exact sequences, we do not fix an ordering of thin edges at a vertex. 

Denote by $\ed(U)$ the set of edges of $U$, where circles of $U$ count as edges. 
For each edge $u$ of $U$ pick a finite non-empty subset of points $B_u=\{b_{u,1},\dots, b_{u,r_u}\}$ on it. Denote by $B=\cup_{u\in \ed(U)}B_u$ the union of these sets, over all edges. We call $B$ a \emph{strong cut} of $U$. For each $b\in B$ there is the module $P_b$, the fiber of the bundle over $b$. Consider the module 
\begin{equation}\label{eq_PB}
 P_B\ :=\ \oplusop{b\in B}{P_b}.
 \end{equation}

\input{fig8_004}

 The complement $U\setminus B$ is a union of oriented intervals and tripods, see Figure~\ref{fig8_004}. Each interval $i$ goes from some point $b_i$ to $b_i'$, and flat connection defines an isomorphism $f_i:P_{b_i}\stackrel{\cong}{\lra} P_{b_i'}$ between the fibers of the projective module at the two endpoints of the interval. This includes the case when interval $i$ closes into a circle in $U$, the two points coincide, and $f_i$ is an automorphism of the fiber over the endpoint.
 We extend $f_i$ by $0$ to an endomorphism of $P_B$, also denoted $f_i$.

 Consider a tripod connected component $t$ of $U\setminus B$, with endpoints $b_1,b_2$ on thin edges and endpoint $b_3$ on the thick edge, see Figure~\ref{fig8_004}. The flat connection comes with an isomorphism $P_{b_1}\oplus P_{b_2} \cong P_{b_3}$. 
 Assume that the orientation flows from the thin edges into the thick edge. Then define the isomorphism for this tripod to be the one coming from the flat connection $f_t: P_{b_1}\oplus P_{b_2} \stackrel{\cong}{\lra} P_{b_3}$. For the opposite orientation, use the same isomorphism in reverse:  $f_t:P_{b_3}\lra P_{b_1}\oplus P_{b_2}$. 
 Extend this isomorphism by $0$ to an endomorphism $f_t:P_B\lra P_B$. 
 The case $b_3=b_1$ of $b_3=b_2$ is allowed, when $t$ closes in $U$ into a loop with an attached interval.  
 
\vspace{0.07in}

\input{fig8_005}

\vspace{0.07in}
 
 Define the endomorphism 
 \begin{equation}\label{eq_map_f}
     f_B\ : \ P_B\lra P_B, \ \ f_B:= \sum_i f_i + \sum_t f_t,  
 \end{equation}
 where the sum is over all edges and tripods in $U\setminus B$. 
 
 \begin{prop}
     $f_B$ is an automorphism of $P_B$. 
 \end{prop}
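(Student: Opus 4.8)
The plan is to realize $f_B$ as a ``block diagonal'' endomorphism, namely a direct sum of the isomorphisms attached to the individual pieces of $U\setminus B$, with respect to two direct sum decompositions of $P_B$ which turn out to be one and the same decomposition grouped in two different ways.

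First I would name the relevant structures. Call the connected components of $U\setminus B$ its \emph{pieces}; by the discussion preceding the proposition each piece $p$ is either an oriented interval or a tripod, and the flat connection attaches to it an isomorphism $f_p$ from $A_p:=\bigoplus_{b\in S(p)}P_b$ to $C_p:=\bigoplus_{b\in T(p)}P_b$, where $S(p)\subseteq B$ is the set of endpoints of $p$ from which its orientation flows \emph{into} $p$ and $T(p)\subseteq B$ is the set of endpoints toward which the orientation flows (see Figure~\ref{fig8_004}); as stipulated, $f_p$ is extended by zero to an endomorphism of $P_B$.

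Next comes the combinatorial heart of the argument. Since every $b\in B$ lies in the interior of an edge of $U$, a small punctured neighborhood of $b$ in $U$ has exactly two branches, and the orientation of that edge marks one of them as incoming and the other as outgoing. Therefore $b$ belongs to $T(p)$ for exactly one piece $p$ and to $S(q)$ for exactly one piece $q$ (possibly $p=q$). Hence $\{S(p)\}_p$ and $\{T(p)\}_p$ are two partitions of the set $B$, and correspondingly $P_B=\bigoplus_p A_p$ and $P_B=\bigoplus_p C_p$ are two internal direct sum decompositions of the very same module $P_B$. With respect to the first decomposition as domain and the second as codomain, $f_B=\sum_p f_p$ is block diagonal with $p$-th block $f_p\colon A_p\xrightarrow{\ \cong\ }C_p$; a block diagonal map all of whose blocks are isomorphisms is an isomorphism, so $f_B$ is an automorphism of $P_B$, with inverse $\sum_p f_p^{-1}$ taken with respect to the reversed decompositions.

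The step that needs the most care — and the only place where a careless argument could slip — is verifying the ``exactly one incoming piece, exactly one outgoing piece at each cut point'' dichotomy in the degenerate configurations explicitly permitted above: an edge that closes up into a vertexless circle, a tripod whose thick leg and one of its thin legs terminate at the same cut point (a loop with an attached interval), and loops at a vertex. In each of these the ``two branches, one in and one out'' count still holds (the two branches may simply belong to the same piece, so that $S(p)$ and $T(p)$ overlap), so the two decompositions of $P_B$, and hence the conclusion, are unaffected.
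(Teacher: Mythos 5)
Your proof is correct and rests on the same key combinatorial observation as the paper's: each $b\in B$ is a target endpoint for exactly one piece of $U\setminus B$ and a source endpoint for exactly one piece, so $\{S(p)\}_p$ and $\{T(p)\}_p$ both partition $B$. The paper packages this by explicitly constructing the inverse $f_B^!$ via edge-orientation reversal and checking $f_B^!\circ f_B=\id=f_B\circ f_B^!$ pointwise, whereas you recognize $f_B$ as block diagonal between two decompositions of $P_B$ indexed by the pieces; this is a slightly cleaner presentation of what is essentially the same argument, and your inverse $\sum_p f_p^{-1}$ is the paper's $f_B^!$.
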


 \begin{proof} 
 Reverse orientation of every edge of $U$ while retaining the flat connection. Then for each each interval and tripod of $U\setminus B$ there is the inverse map, denoted $f_i^!:P_{b_i'}\stackrel{\cong}{\lra} P_{b_i}$ or 
 $f_t^!: P_{b_3} \stackrel{\cong}{\lra} P_{b_1}\oplus P_{b_2}$, $f_t^!:   P_{b_1}\oplus P_{b_2}\stackrel{\cong}{\lra} P_{b_3}$, depending on orientation. Extend these maps to endomorphisms of $P_B$ by $0$ and use the same notation for the extended maps. 
 Define the endomorphism 
 \begin{equation}
     f^!_B\ : \ P_B\lra P_B, \ \ f_B:= \sum_i f^!_i + \sum_t f^!_t,  
 \end{equation}
 with the sum over all edges and tripods in $U\setminus B$. 

 For each point $b\in B$ there is exactly one interval or tripod where $b$ is one of the \emph{source} endpoints, with the orientation of $U$ near $b$ looking \emph{into} the interval or tripod. Likewise, for each $b$, there is exactly one interval or tripod where $b$ is one of the \emph{target} endpoints, with the orientation of $U$ near $b$ looking \emph{out} the interval or tripod. 

 This observation allows to easily compute the compositions $f^!_B\circ f_B$ and $f_B\circ f_B^!$ applied to an element $x\in P_b$ and check that $f^!_B\circ f_B(x)=x=f_B\circ f_B^!(x)$. Consequently, $f_B,f_B^!$ are mutually-inverse automorphisms of $P_B$. 
 \end{proof}
 
An automorphism $f:P\lra P$ of a finitely generated projective $R$-module defines an element $[f]\in K_1(R)$. One picks an isomorphism $P\oplus Q\cong R^n$ for some finitely generated projective $R$-module $Q$ and extends $f$ by $\id_Q$ to the automorphism $(f,\id_Q)$ of $P\oplus Q\cong R^n$. Consider the corresponding matrix $A\in \GL(n,R)$, matrix $A$ in  $\GL(R)=\cup_{n\ge 0}\GL(n,R)$ and its image $[A]$ in $K_1(R):=\GL(R)/[\GL(R),\GL(R)]$. The image $[A]$ does not depend on the choice of $Q$ and direct sum decomposition $P\oplus Q\cong R^n$. 

\vspace{0.07in}
 
To an embedded 1-foam $U$ and a strong cut $B$  assign the image $[f_B]$ 
 of the automorphism $P_B$    in $K_1(R)$. Let us examine the dependence of $[f_B]$ on the choice of $B$.
First, moving a point $b\in B$ along an edge of $U$ does not change $[f_B]$. 

 To a finitely generated projective module $P$ associate an element $\tau(P)$ of $K_1(R)$ which is the image of the transposition automorphism of $P\oplus P$, that is, automorphism given by the matrix $\begin{pmatrix}
0 & 1 \\
1 & 0 
\end{pmatrix}.$ Note that $\tau(P)^2=1$ in $K_1(R)$, $\tau(P)=\tau([P])$ depends only on the image of $P$ in $K_0(R)$, and $[P]\mapsto \tau(P)$ is a homomorphism from $K_0(R)$ to the group of 2-torsion elements of $K_1(R)$,
\begin{equation}\label{eq_tau}
    \tau \ : \ K_0(R) \lra K_1(R)_2, \ \ K_1(R)_2:=\{x\in K_1(R)|x^2=1\}. 
\end{equation}
Since $\tau(R)=-1$, this homomorphism is nontrivial already if $2\not= 0$ in $R$.

\begin{prop} 
\label{prop_add_point} 
Add a point $b'$ to $B$ forming a larger set $B'=B\sqcup \{b'\}$. 
Then 
\begin{equation}\label{eq_B_prime}
    [f_{B'}]=[f_B]\tau(P_{b'}) \in K_1(R).
\end{equation}
 \end{prop}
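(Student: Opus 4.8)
The plan is to compare the automorphism $f_{B'}$ of $P_{B'} = P_B\oplus P_{b'}$ with the stabilization $f_B\oplus\id_{P_{b'}}$, whose class in $K_1(R)$ is again $[f_B]$. Since $K_1(R)$ is an abelian group, setting $\sigma := (f_B\oplus\id_{P_{b'}})^{-1}\circ f_{B'}$ gives $[f_{B'}] = [f_B]\cdot[\sigma]$, so everything reduces to identifying $[\sigma]$ with $\tau(P_{b'})$.

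The first ingredient is an elementary lemma: for finitely generated projective modules $A,B$ and isomorphisms $g_1\colon A\stackrel{\cong}{\lra}B$, $g_2\colon B\stackrel{\cong}{\lra}A$, the automorphism of $A\oplus B$ with block form $\left(\begin{smallmatrix}0 & g_2\\ g_1 & 0\end{smallmatrix}\right)$ has class $[g_2 g_1]\cdot\tau(A)$ in $K_1(R)$. This follows by factoring it as $\left(\begin{smallmatrix}g_2 g_1 & 0\\ 0 & \id_B\end{smallmatrix}\right)\circ\left(\begin{smallmatrix}0 & g_1^{-1}\\ g_1 & 0\end{smallmatrix}\right)$ and observing that the second factor is conjugate, via the isomorphism $\left(\begin{smallmatrix}\id_A & 0\\ 0 & g_1\end{smallmatrix}\right)\colon A\oplus A\to A\oplus B$, to the transposition of $A\oplus A$, whose class is $\tau(A)$ by \eqref{eq_tau}; since $g_1$ is an isomorphism, $\tau(A)=\tau(B)$, as $\tau$ factors through $K_0(R)$.

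Next comes a reduction. Since moving a point of the cut along an edge without crossing another point of the cut does not change $[f_{B'}]$ (observed just before the statement), and since every component of $U\setminus B$ meeting a given edge has at least one endpoint in $B$ (because $B_u\neq\emptyset$ for each edge $u$), we may slide $b'$ until it lies immediately next to some $b\in B$ on the same edge, so that the short segment $\epsilon$ between $b$ and $b'$ is a single component of $U\setminus B'$. Let $c_0$ be the component of $U\setminus B$ containing $\epsilon$; adding $b'$ replaces $c_0$ by $\epsilon$ together with $c_*:=c_0\setminus\overline{\epsilon}$ and leaves all other components untouched, and the flat‑connection isomorphism of $c_0$ factors through $P_{b'}$ via those of $\epsilon$ and $c_*$. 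Tracing through \eqref{eq_map_f} for both $f_{B'}$ and $f_B\oplus\id_{P_{b'}}$, and using that each point of the cut is a source of exactly one component and a target of exactly one component, one checks that $\sigma$ is the identity on every summand $P_b$ except on a block $P_A\oplus P_{b'}$ (with $A$ either the single endpoint $b$, when $b$ is a source end of $c_0$, or the set of all source endpoints of $c_0$, when $b$ is a target end), on which $\sigma$ has the form $\left(\begin{smallmatrix}0 & g^{-1}\\ g & 0\end{smallmatrix}\right)$ for an isomorphism $g$ of $P_A$ onto $P_{b'}$. By the lemma $[\sigma]=\tau(P_A)=\tau(P_{b'})$, hence $[f_{B'}]=[f_B]\cdot\tau(P_{b'})$, which is \eqref{eq_B_prime}.

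The routine but fiddly step — and the one to be careful with — is the computation of $\sigma$ in the previous paragraph: one must verify that in every local configuration ($b$ a source versus a target end of $c_0$; $c_0$ an interval versus a tripod; $b$ on a thin versus the thick leg; and the degenerate cases where $c_0$ closes into a loop with an attached interval) the automorphism $\sigma$ collapses to the same block‑transposition normal form. I expect this to amount to the single observation that inserting $b'$ always reroutes exactly one isomorphism $P_?\to P_{??}$ as $P_?\to P_{b'}\to P_{??}$, carrying along the identity on any complementary summand, which is precisely what contributes the factor $\tau(P_{b'})$.
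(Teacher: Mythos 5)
Your proof is correct, and it is organized rather differently from the paper's. The paper carries out the computation for a single representative configuration, namely when the component of $U\setminus B$ receiving $b'$ is a circle with one basepoint $b$: after identifying $P_{b'}$ with $P_b$ along one arc, it writes $f_{B'}=\left(\begin{smallmatrix}0&f\\ \id&0\end{smallmatrix}\right)=\left(\begin{smallmatrix}f&0\\ 0&\id\end{smallmatrix}\right)\left(\begin{smallmatrix}0&\id\\ \id&0\end{smallmatrix}\right)$ on $P_b\oplus P_b$ and reads off $[f_{B'}]=[f]\tau(P_b)$, then declares that ``essentially the same computation'' covers components of other types. Your version is more systematic: you first observe that one may slide $b'$ adjacent to some $b\in B$ (using the invariance under moving cut points along edges, which the paper records just before the proposition), then extract a clean and correct lemma computing the $K_1$-class of a block transposition $\left(\begin{smallmatrix}0&g_2\\ g_1&0\end{smallmatrix}\right)$ as $[g_2g_1]\tau(A)$, and finally show that the correction factor $\sigma=(f_B\oplus\id_{P_{b'}})^{-1}f_{B'}$ is the identity off a single block on which it has exactly this form with $g_2=g_1^{-1}$. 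This handles intervals, tripods (on either a thin or the thick leg), and the degenerate loop-with-attached-interval cases uniformly, whereas the paper leaves those to the reader. The price is a bit more bookkeeping; the gain is that the lemma isolates exactly the $K_1$-algebraic content and removes the case-by-case matrix manipulations, and the ``slide $b'$ adjacent to $B$'' reduction makes the local picture as small as possible before computing $\sigma$.
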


\input{fig8_001} 

\begin{proof} Point $b'$ is added to a connected component of $U\setminus B$. 
Consider the case when the component is an interval that closes into a circle in $U$, see  Figure~\ref{fig8_001}. On the left hand side of the figure, set $B=\{b\}$ and $[f_B]=[f]$, where $f:P_{b}\lra P_{b}$ is the monodromy around the circle. 
 On the right hand side of the figure, there are two base points and maps are chosen to be $\id,f$ by using the monodromy along the top arc to identify $P_b$ and $P_{b'}$. 
The map 
\[
f_{B'}  \ = \ \begin{pmatrix}
    0 & f \\ \id & 0 
\end{pmatrix} \ = \ \begin{pmatrix}
    f & 0 \\ 0 & \id 
\end{pmatrix}
\begin{pmatrix}
    0 & \id \\ \id & 0 
\end{pmatrix}
\]
is an automorphism of $P_b\oplus P_b\cong P_b\oplus P_{b'}$ and $[f_{B'}]=[f_B]\tau(P_b)=[f]\tau(P_b)\in K_1(R).$

When point $b'$ is on a component of $U\setminus B$ of a different type, essentially the same computation establishes the formula \eqref{eq_B_prime}. 
\end{proof} 

\begin{corollary}\label{cor_inv}
    Given a planar 1-foam $U$ with a flat $R$-connection, the expression  \begin{equation}\label{eq_f_tau}
        [f_B]\tau(P_B) \ \in \ K_1(R)
    \end{equation}
    does not depend on the choice of a strong cut $B$ and is an invariant of $U$. 
\end{corollary}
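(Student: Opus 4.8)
The plan is to deduce the corollary formally from Proposition~\ref{prop_add_point} together with the fact that $\tau$ takes values in the $2$-torsion of the abelian group $K_1(R)$. First I would record the two structural facts I need: $K_1(R)$ is abelian, being the abelianization $\GL(R)/[\GL(R),\GL(R)]$; and $\tau$ is a homomorphism out of $K_0(R)$, so that for any strong cut $B$ one has $\tau(P_B)=\prod_{b\in B}\tau(P_b)$, a product of commuting involutions (here $P_B=\bigoplus_{b\in B}P_b$, so $[P_B]=\sum_{b\in B}[P_b]$ in $K_0(R)$ and \eqref{eq_tau} applies). Abbreviate the expression \eqref{eq_f_tau} by $g(U,B):=[f_B]\,\tau(P_B)\in K_1(R)$.

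The key step is that $g(U,B)$ is unchanged under enlarging the cut by one point. If $B'=B\sqcup\{b'\}$, Proposition~\ref{prop_add_point} gives $[f_{B'}]=[f_B]\,\tau(P_{b'})$, while $\tau(P_{B'})=\tau(P_B)\,\tau(P_{b'})$, so
\[
g(U,B')=[f_{B'}]\,\tau(P_{B'})=[f_B]\,\tau(P_{b'})\,\tau(P_B)\,\tau(P_{b'})=[f_B]\,\tau(P_B)\,\tau(P_{b'})^2=g(U,B),
\]
using commutativity of $K_1(R)$ and $\tau(P_{b'})^2=1$. Note that Proposition~\ref{prop_add_point} is stated for an arbitrary new point $b'$, added to whichever connected component of $U\setminus B$ it happens to lie in, so no genericity assumption relating $b'$ to $B$ is needed.

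Cut-independence is then immediate. Given two strong cuts $B_1,B_2$ of $U$, their union $B_1\cup B_2$ still meets every edge, hence is again a strong cut, and it is obtained from $B_1$ (respectively $B_2$) by adjoining the finitely many points of $B_2\setminus B_1$ (respectively $B_1\setminus B_2$) one at a time; iterating the previous step yields $g(U,B_1)=g(U,B_1\cup B_2)=g(U,B_2)$. Thus $g(U):=g(U,B)$ is well defined independently of $B$. Finally, the entire construction --- the fibers $P_b$, the interval and tripod isomorphisms $f_i,f_t$, the automorphism $f_B$, and the summand $P_B$ --- is manifestly natural under homeomorphisms of $U$ preserving all decorations (such a homeomorphism carries a strong cut of the source to one of the target and intertwines the resulting automorphisms), so $g(U)$ is an invariant of the decorated planar $1$-foam $U$.

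I do not anticipate a real obstacle: all the geometric and computational content has already been packaged into Proposition~\ref{prop_add_point}, and what remains is the bookkeeping that the correction factor $\tau(P_B)$ is precisely what cancels the discrepancy in \eqref{eq_B_prime} (because that discrepancy is a $2$-torsion element and $K_1(R)$ is abelian), together with the elementary observation that any two strong cuts are joined by a chain of one-point enlargements via their common refinement $B_1\cup B_2$. If anything needs care, it is only to note that Proposition~\ref{prop_add_point} does not require $b'$ to avoid the existing cut, which is exactly what makes the refinement argument valid verbatim.
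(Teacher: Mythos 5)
Your proof is correct and is essentially the argument the paper has in mind: Corollary~\ref{cor_inv} is stated without a separate proof because it is meant to follow directly from Proposition~\ref{prop_add_point}, exactly by the cancellation $\tau(P_{b'})^2=1$ in the abelian group $K_1(R)$ and the multiplicativity $\tau(P_{B'})=\tau(P_B)\tau(P_{b'})$. Your passage between two arbitrary strong cuts via the common refinement $B_1\cup B_2$, added one point at a time, is the standard way to finish and matches the intent of the text.
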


Consider a circle $U$ in the plane with a fiber $P_b$ over a base point $b$ and automorphism $f$ as the monodromy, see Figure~\ref{fig8_001} on the left. The invariant from Corollary~\ref{cor_inv} can be computed for $B=\{b\}$ and it yields $[f]\tau(P_b)$. Yet, for the expected correspondence between flat foams (up to cobordism) and $K_1(R)$ one would want to assign the invariant $[f]$ to the circle with monodromy $f$. A way to balance out $\tau_{P_b}$ is to further scale \eqref{eq_f_tau} by contributions from local maxima and minima of the diagram of $U$ in the plane as follows. 

Put $U$ in general position in $\R^2$ so that it has finitely many critical points under the $y$-axis projection, all non-degenerate. Keeping track of orientations of edges of $U$, there are two types of ``cups'' and ``caps'' of the diagram, see Figure~\ref{fig8_002}. Denote the set of local maxima and minima of $U$  by $M$ for a given generic diagram of $U$. Given a local maximum or minimum point $m\in U$, denote by $\tau'(m)$ the element of $K_1(R)$ assigned to $m$ according to the table in Figure~\ref{fig8_002}.

\vspace{0.07in}

\input{fig8_002} 

 Define 
 \begin{equation}
     \tau'(U) \ := \ \prod_{m\in M} \tau'(m),
 \end{equation}
 suppressing possible dependence of $\tau'$ on the choice of a  diagram. 
For example,  $\tau'(U)= \tau(P)$ for the diagram in Figure~\ref{fig8_001}.  
 For a choice of $U$, a strong cut $B$ and a generic diagram of $U$  let 
 \begin{equation}\label{eq_wt_f}
     \widetilde{f}(U) \ := \ [f_B]\tau(P_B)\tau'(U)\ \in\  K_1(R). 
 \end{equation}

 \begin{theorem}
     The invariant $\widetilde{f}(U)\in K_1(R)$ does not depend on the choice of a strong cut $B$ and a generic projection to the $y$-axis of a planar $R$-foam $U$. Furthermore, it is invariant under cobordisms of planar foams and defines a  homomorphism
     \begin{equation}\label{eq_gamma_hom}
         \gammaoneR\ :\ \Cob_{1}^1(R) \lra K_1(R) 
     \end{equation}
     from the group of cobordism classes of $R$-decorated planar one-foams to $K_1(R)$. Finally, the homomorphism $\gammaoneR$ is a split surjection.  
 \end{theorem}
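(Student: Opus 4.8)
\textbf{Overall strategy.} The plan is to prove the four claims of the theorem in sequence: (i) well-definedness of $\widetilde f(U)$ independent of the strong cut $B$; (ii) independence of the generic $y$-axis projection; (iii) cobordism invariance, which upgrades $\widetilde f$ to a map on $\Cob^1_1(R)$; and (iv) that this map is a homomorphism and a split surjection. Claim (i) is essentially already in hand: Corollary~\ref{cor_inv} shows $[f_B]\tau(P_B)$ is independent of $B$, and $\tau'(U)$ depends only on the diagram, not on $B$, so the product $\widetilde f(U)=[f_B]\tau(P_B)\tau'(U)$ is cut-independent. So the real work is in (ii), (iii), (iv).

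\textbf{Step 1: invariance under change of projection.} Any two generic $y$-axis projections of a fixed planar 1-foam $U$ are related by a sequence of local moves: isotopies of $\R^2$ preserving genericity, plus the standard Morse-type moves that create/cancel a pair of adjacent critical points (a ``zig-zag'' or Reidemeister-I-like wiggle on an edge), and moves sliding a critical point past a vertex. For the creation/cancellation of an adjacent cap--cup pair on an edge carrying $P$: by the table in Figure~\ref{fig8_002} the two new extrema contribute $\tau(P)\cdot 1$ or $1\cdot\tau(P)$ to $\tau'(U)$ (depending on orientation), i.e. a factor $\tau(P)$; simultaneously, a strong cut that gains a point on the newly created little arc picks up exactly a compensating $\tau(P_{b'})=\tau(P)$ by Proposition~\ref{prop_add_point}. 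Choosing cuts compatibly on both sides, the factors cancel in $K_1(R)$. For a critical point sliding past a vertex, one checks directly on the decorated tripod that $[f_B]\tau(P_B)$ changes by the same $\tau$-factor that $\tau'(U)$ changes by — this is a finite case check over the (finitely many) local pictures of an extremum adjacent to an \emph{in} or \emph{out} vertex with compatible orientations. Hence $\widetilde f(U)$ depends only on the planar isotopy class of $U$, i.e. on its class in $\COB^1_1(R)$ before passing to cobordism.

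\textbf{Step 2: cobordism invariance.} Let $F\subset \R^2\times[0,1]$ be an $R$-decorated 2-foam with $\partial F=U_1\sqcup(-U_0)$; I want $\widetilde f(U_0)=\widetilde f(U_1)$. By Morse theory applied to the height function on $F$ (after a generic perturbation, compatibly with the seam graph $s(F)$), $F$ decomposes into elementary cobordisms: births/deaths of circles, saddles (pair-of-pants), seam ``tripod'' cobordisms of the type in Figures~\ref{fig3_004}--\ref{fig3_005}, and passages through vertices of $F$ governed by the commutative cube \eqref{eq_cd_proj_2}. It suffices to check that $\widetilde f$ is unchanged across each elementary piece. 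For a birth/death of a circle carrying $P$ with monodromy $\id_P$: this contributes $[\id_P]=1$ and the accompanying extrema contribute a $\tau$ that is absorbed as in Step 1. For a saddle merging $\SS^1(\alpha)\sqcup\SS^1(\beta)$ into $\SS^1(\alpha\beta)$: choosing the obvious strong cuts, $[f_B]$ changes from (a conjugate of) $\mathrm{diag}(\alpha,\beta)$-type data to $\alpha\beta$-type data, and relation (1) in $K_1(R)$, $[(P,\alpha)]+[(P,\beta)]=[(P,\alpha\beta)]$, gives equality in $K_1(R)$ (tracking basepoints as in Figure~\ref{fig3_002} right so one gets $\alpha\beta$ and not a conjugate — conjugation is invisible in $K_1$ anyway). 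For the circular tripod 2-foam merging $\SS^1(\alpha_1)\sqcup\SS^1(\alpha_3)$ into $\SS^1(\alpha_2)$ with the compatible direct sum $P_2\cong P_1\oplus P_3$ and $\alpha_2=\alpha_1\oplus\alpha_3$: the matrix $[f_B]$ only reshuffles a block-diagonal automorphism into another block-diagonal automorphism of the same module, so the class in $K_1(R)$ is literally unchanged (this is the additive/``$\oplus$'' relation, and with $\alpha_2=\alpha_1\oplus\alpha_3$ it is a special case of relation (2)). Passing through a vertex of $F$ changes the chosen system of direct-sum decompositions by the commutative cube \eqref{eq_cd_proj_2}, which amounts to conjugating $[f_B]$ by a fixed isomorphism — invisible in $K_1(R)$. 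Combining all elementary pieces, $\widetilde f(U_0)=\widetilde f(U_1)$, so $\widetilde f$ descends to $\gammaoneR:\Cob^1_1(R)\to K_1(R)$.

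\textbf{Step 3: homomorphism and split surjection.} Additivity is immediate: for a disjoint union $U\sqcup U'$, take the union of strong cuts and a disjoint diagram; then $P_{B\sqcup B'}=P_B\oplus P_{B'}$, $f_{B\sqcup B'}=f_B\oplus f_{B'}$, $\tau(P_{B\sqcup B'})=\tau(P_B)\tau(P_{B'})$, $\tau'(U\sqcup U')=\tau'(U)\tau'(U')$, so $\widetilde f(U\sqcup U')=\widetilde f(U)\widetilde f(U')$ — and since $K_1(R)$ is written multiplicatively while $\Cob^1_1(R)$ additively, this is exactly additivity. For surjectivity and a splitting: define a set-map $s:K_1(R)\to \Cob^1_1(R)$ by sending the class of an automorphism $\alpha\in\GL(n,R)$ to (the cobordism class of) the planar circle $\SS^1(R^n,\alpha)$ drawn with a single local maximum and single local minimum; then $\widetilde f(\SS^1(R^n,\alpha)) = [\alpha]\,\tau(R^n)\,\tau(R^n) = [\alpha]$ since $\tau$ is $2$-torsion, so $\gammaoneR\circ s=\mathrm{id}_{K_1(R)}$. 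One checks $s$ is well-defined on $K_1(R)$ — stabilization $\alpha\mapsto\alpha\oplus\id$ corresponds to adding a null-homotopic circle $\SS^1(R,\id)$, which is null-cobordant (Figure~\ref{fig3_001} middle), and commutators $[\alpha,\beta]$ give null-cobordant circles (Figure~\ref{fig3_003}) — and that it is a group homomorphism via the pants cobordism (Figure~\ref{fig3_002}). Thus $\gammaoneR$ is a split surjection. (The full statement that $\gammaoneR$ is an isomorphism is deferred to Theorem~\ref{thm_gamma_iso}; here we only need the splitting.)

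\textbf{Main obstacle.} The delicate point is Step 1 / the bookkeeping in Step 2: making the $\tau$-factors from extrema cancel against the $\tau$-factors from Proposition~\ref{prop_add_point} requires choosing strong cuts \emph{coherently} across each local move and each elementary cobordism, and carefully matching orientations in the four-entry table of Figure~\ref{fig8_002} with the orientation conventions on seams and tripods. None of it is deep, but the sign/orientation case analysis — especially sliding a critical point past an \emph{in} versus an \emph{out} vertex, and tracing basepoints through the pants and circular-tripod cobordisms so that one genuinely lands on $\alpha\beta$ and $\alpha_1\oplus\alpha_3$ rather than only conjugates — is where the proof must be done with care.
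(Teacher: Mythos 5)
Your proposal follows essentially the same route as the paper: verify invariance of $\widetilde f$ under a finite list of elementary cobordisms (your Morse decomposition is the paper's Figure~\ref{fig6_008} in slightly different dress), then construct the explicit section $s=\gammaoneRbar$ via the circle $\SS^1(R^n,\alpha)$. Your Steps 1 and 3 are correct and match the paper's reasoning, including the pleasant observation that $\tau(P_B)\tau'(U)=\tau(R^n)^2=1$ for the standard circle, and the same array of null-cobordisms (disk, one-holed genus-$g$ surface) for well-definedness of $s$.

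The gap is in Step 2. Your list of elementary pieces, and the checks you carry out, cover circle birth/death, the pants cobordism between closed circles, the circular tripod of Figure~\ref{fig3_004}, and vertex passages. This omits three of the paper's local moves: the \emph{singular cap/cup} and \emph{singular saddle} (top row of Figure~\ref{fig6_008}, i.e.\ a seam birth/death and a seam surgery, which are local and not reducible to the closed circular tripod), and, most importantly, the \emph{local tangle saddle} (bottom right of Figure~\ref{fig6_008}). You absorb the saddle into the pants-on-circles case and invoke relation (1), but a saddle in a generic 2-foam cobordism involves four arc-ends that may run into trivalent vertices rather than closing up into circles, so the check must be local. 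That local check is precisely where the nontrivial bookkeeping lives: one shows $[f_B(U_0)]=\tau(P)[f_B(U_1)]$ because the two generalized permutations of $P^4$ have opposite parity, while $\tau'(U_0)=\tau(P)\,\tau'(U_1)$ because the saddle creates or destroys exactly one cap--cup pair, and these two $\tau(P)$'s cancel. Your ``main obstacle'' paragraph correctly senses that $\tau$-factor cancellations are delicate, but attributes the subtlety to Proposition~\ref{prop_add_point} (cut enlargement) rather than to this parity-of-permutation identity, which is a genuinely different mechanism. To repair the argument, replace the global circle-merging reasoning at the saddle by the paper's purely local computation, and add the singular cap/cup and singular saddle verifications (each of which reduces to the fact that a split followed by a merge along a single seam is the identity on the thick fiber).
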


 \begin{remark} Recall that here we consider planar 1-foams without orderings of thin edges near vertices and, correspondingly, cobordisms between them in $\R^2\times [0,1]$ where thin facets along a seam are unordered.
 \end{remark} 

 \begin{proof}
 Both terms $[f_B]\tau(P_B)$ and $\tau'(U)$ do not depend on the choice of a strong cut $B$, which follows from Proposition~\ref{prop_add_point}. The term $\tau'(U)$ is invariant under isotopies of generic diagrams, some of which are shown in Figure~\ref{fig8_003}, and the term $[f_B]\tau(P_B)$ does not depend on such isotopies.  

\input{fig8_003}

It remains to check the invariance of $\widetilde{f}(U)$ under elementary cobordisms of planar 1-foams, shown in Figure~\ref{fig6_008}.
 
\input{fig6_008}

\vspace{0.07in}

 The top row of the figure shows \emph{singular cap}  and \emph{singular saddle} cobordisms between planar foams $U_0,U_1$. Picking matching strong cuts for the boundary foams and positioning the arcs vertically, one checks the equality of evaluations $\widetilde{f}(U_0)=\widetilde{f}(U_1)$. For instance, consider the singular cap cobordism (top row of Figure~\ref{fig6_008} on the left) and choosing the cuts to contain boundary points on the interval and the split-merge 1-foams, the map $P\lra P$ for the interval is the composition $P\lra P_1\oplus P_2\lra P$ corresponing to the split and merge vertices for the other 1-foam, resulting in equality $\widetilde{f}(U_0)=\widetilde{f}(U_1)$.  The split and merge vertices are on the same seam of the cobordism, and the composition $P\lra P_1\oplus P_2\lra P$ is the identity, and likewise for the map $P\lra P$ associated to the interval. A similar computation works for a singular saddle (top row of Figure~\ref{fig6_008} on the right). 

 The middle row shows boundaries of a cobordism which is an embedded 2-foam with a single vertex, see Figure~\ref{fig6_002}. Facets of the foam near a vertex carry projective modules $P_1$, $P_2$, $P_3$, $P_{12}$, $P_{13}$, $P_{123}$. Isomorphisms \eqref{eq_iso_fix} are fixed along the four seams near the vertex. 
 %The four seams near the vertex fix isomorphisms 
 %\begin{equation}
 %P_{12}\cong P_1\oplus P_2, \qquad P_{23}\cong P_2\oplus P_3, \qquad
 %P_{123}\cong P_1\oplus P_{23}, \qquad P_{123}\cong P_{12}\oplus P_3.
 %\end{equation} 
 Commutative diagram \eqref{eq_cd_proj_2} encodes compatibility of these isomorphisms, which is the flat connection condition at the vertex. 
 %These isomorphisms are compatible, in the sense of commutativity of the following diagram 
 %\begin{equation}\label{eq_cd_proj}
 %\begin{CD}
%P_1\oplus P_2\oplus P_3 @>\cong>> P_{12}\oplus P_3\\
%@VV{\cong}V @VV{\cong}V\\
%P_1\oplus P_{23} @>{\cong}>> P_{123}
%\end{CD}
%\end{equation} 

Commutativity of the diagram \eqref{eq_cd_proj_2} is equivalent to the equality of two isomorphisms $P_{123}\lra P_1\oplus P_2\oplus P_3$ corresponding to the flat connections on the two diagrams on the left in the 2nd row of Figure~\ref{fig6_008}. 
 Consequently, in this case $\widetilde{f}(U_0)=\widetilde{f}(U_1)$ as well. The equality of the invariant $\widetilde{f}$ for the other presentation of the vertex, corresponding to the cobordism on the right of row 2, follows by a similar computation or just by a reduction to the previous cobordism and singular cup and cap cobordisms.

 The last row of Figure~\ref{fig6_008} shows cobordisms that do not contain seams or vertices in the region where topology changes (circle creation or annihilation and saddle point cobordisms). Our product of 3 factors in the definition of $\widetilde{f}(U)$ was chosen to be trivial on a circle (with either orientation) decorated by $P$ with the trivial monodromy. Consequently, circle creation and annihilation does not change $\widetilde{f}(U)$.  

For the saddle point cobordism in Figure~\ref{fig6_008} bottom right we can assume that the monodromies along each of the four intervals are $\id_P$. Denote the 1-foam on the left, respectively right, by $U_0$, respectively $U_1$, and pick strong cuts $B$ of $U_0,U_1$ that consist of the four endpoints and the same points on $U_0,U_1$ outside of the depicted area. Then $f_B(U_0)=\tau(P)f_B(U_1)$, since the permutations of $P^4$ for these two diagrams have opposite parity. Also, $\tau(P_B)$ is the same for both foams, while $\tau'(U_0)=\tau(P)\tau'(U_1)$ from the table in Figure~\ref{fig8_002}. Consequently, $\widetilde{f}(U_0)=\widetilde{f}(U_1)$. 

Thus, there is indeed a well-defined homomorphism $\gammaoneR$ in \eqref{eq_gamma_hom}.

\vspace{0.07in} 
    
{\it Homomorphism $\gammaoneRbar$.} A section 
     \begin{equation}\label{eq_gamma_prime}
         \gammaoneRbar \ : \ K_1(R) \lra \Cob_1^1(R) 
     \end{equation}
     of $\gammaoneR$ is given by taking $[f]\in K_1(R)$, for an automorphism $f:P\lra P$ of a finitely generated projective $R$-module, to (fixing orientation) a clockwise  oriented circle decorated by $P$ with monodromy $f$. 
    Different choices of $f$ given $[f]$ lead to cobordant foams. If $[f_1]=[f_2]$ then $f_1f_2^{-1}\in [\GL(R),\GL(R)]$ is in the commutator subgroup of $\GL(R)$ and can be written as a product 
    \[
    f_1 f_2^{-1} =\prod_{i=1}^g [x_i,y_i]
    \]
    of $g$ commutators. Then $f_1f_2^{-1}$ is the monodromy of a circle embedded in $\R^2$ which bounds a genus $g$ surface in the half-space $\R^3_+$ with a flat connection around the $i$-th handle described by monodromies $x_i,y_i$, see Figure~\ref{fig3_003} on the right for a single handle example.  Splitting a circle with the monodromy $f_1f_2^{-1}$ into circles with monodromies $f_1,f_2^{-1}$ via a pants cobordism and the second circle to the opposite boundary component of $\R^2\times [0,1]$, which reverses its orientation and thus inverts $f_2^{-1}$,  shows that clockwise oriented circles with monodromies $f_1,f_2$ are cobordant. 
 \end{proof}

 Clearly, 
 \begin{equation}\label{eq_dir_summand}
     \gammaoneR \, \gammaoneRbar \ = \ \id, 
 \end{equation}
 so that $\gammaoneRbar$ realizes $\Cob^1_1(R)$ as a direct summand of abelian group $K_1(R)$.

\begin{remark}
  A strong cut on a one-foam $U$ can be generalized to a cut. The latter is a finite collection $B$ of points on edges $U$ such that each connected component of the complement $U\setminus B$ is an oriented tree. Figure~\ref{fig8_006} on the left gives example of a cut on a theta-foam. For each connected component $C$ of $U\setminus B$ 
 the flat connection on $U$ defines an isomorphism 
 \begin{equation}\label{eq_tree_iso}\oplus_{b\in \mathrm{in}(C)} P_b\lra \oplus_{b\in \mathrm{out}(C)} P_b,
 \end{equation} 
 where $\mathrm{in}(C)$, respectively $\mathrm{out}(C)$, are valency one vertices (boundary vertices of $C$) where the orientation of the edge is \emph{into}, respectively \emph{out} of $C$. This isomorphism is given by adding additional points $c$, one for each edge of $C$ not adjacent to a boundary vertex, and composing the direct sum isomorphisms over all non-boundary vertices of $C$. Examples are shown in Figure~\ref{fig8_006} in the center and on the right, where one additional point, labelled $0$ is added to define the map. In the center example, \eqref{eq_tree_iso} isomorphism is the composition $P_1\oplus P_2\lra P_0 \lra P_3\oplus P_4$ coming from the flat connection. In the example on the right, flat connection defines isomorphisms $P_2\stackrel{\cong}{\lra} P_0\oplus P_4$, $P_1\oplus P_0\stackrel{\cong}{\lra}$. The desired isomorphism is the composition 
 $P_1\oplus P_2\stackrel{\cong}{\lra} P_1\oplus P_0\oplus P_4 \stackrel{\cong}{\lra} P_3\oplus P_4$.

\input{fig8_006}

To a cut $B$ one assigns isomorphism $f_B:P_B\lra P_B$ as the sum of isomorphisms over all components of $U\setminus B$. The invariant $\widetilde{f}(U)$ can then be defined as in \eqref{eq_wt_f}, for any cut $B$, generalizing its definition for strong cuts and giving a cobordism-invariant class in $K_1(R)$ for a foam $U$ which does not depend on the choice of a cut $B$. 
\end{remark}

\begin{prop}\label{prop_clock}
    Clockwise and anticlockwise circles with fiber $P$ and monodromy $f$ are cobordant.  
\end{prop}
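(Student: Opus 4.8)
The plan is to show the two circles represent the same element of the abelian group $\Cob_1^1(R)$; since this group is by construction the group of cobordism classes of embedded $R$-decorated $1$-foams, equality of classes is exactly the asserted cobordance. Write $C^{\mathrm{cw}}(f)$ and $C^{\mathrm{acw}}(f)$ for the clockwise, resp.\ anticlockwise, planar circle with fiber $P$ and monodromy $f$. I will produce two explicit embedded $R$-decorated $2$-foams and combine the two resulting relations to get $[C^{\mathrm{acw}}(f)]=[C^{\mathrm{cw}}(f)]$.

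First I would build the embedded "tube" cobordism, the embedded incarnation of Figure~\ref{fig3_001} on the right. Take two concentric circles in the bottom plane $\R^2\times\{0\}$, of radii $1$ and $2$, and join them by the annulus $A\subset\R^2\times[0,1]$ swept out by a smooth one-parameter family of horizontal circles whose radius increases monotonically from $1$ to $2$ while the height rises from $0$, reaches a small maximum (scaled to lie below $1$), and returns to $0$. Circles in this family have distinct radii, hence are pairwise disjoint, so $A$ is embedded and $\partial A$ is the pair of concentric circles. Orienting the family consistently and equipping $A$ with the flat $P$-connection whose holonomy around a core circle is $f$ and which is trivial in the radial direction, the two boundary components of the annulus run oppositely along the core, so they acquire opposite planar handedness and inverse monodromies; tracking the induced boundary orientation one gets $\partial A\cong C^{\mathrm{acw}}(f)\sqcup C^{\mathrm{cw}}(f^{-1})$. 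This yields the relation $[C^{\mathrm{acw}}(f)]+[C^{\mathrm{cw}}(f^{-1})]=0$ in $\Cob_1^1(R)$.

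Second I would build the "reflection" cobordism, the one-dimension-up analogue of the reflection null-bordism used to make $\Cob_0^1(R)$ a group (it realizes the fact that $U\sqcup\rho(U)$ bounds, with $\rho=\text{reflect-and-reverse-orientation}$ the minus map of $\Cob_1^1(R)$). Concretely, place $C^{\mathrm{cw}}(f)$ very close to a line $\ell\subset\R^2\times\{0\}$ and disjoint from it, and rotate it about $\ell$ through angle $\pi$. The rotated copies at distinct angles lie in distinct planes that meet only along $\ell$, which the circle avoids, so the trace is an embedded annulus with both boundary circles in the bottom plane, namely $C^{\mathrm{cw}}(f)$ at angle $0$ and, at angle $\pi$, the reflection of $C^{\mathrm{cw}}(f)$ across $\ell$ with its induced boundary orientation, which is $C^{\mathrm{cw}}(f^{-1})$ (reflection preserves the monodromy and turns the circle anticlockwise, and the boundary orientation reverses it back, inverting the monodromy). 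Pulling the flat connection back from $C^{\mathrm{cw}}(f)$ along the rotation direction gives $[C^{\mathrm{cw}}(f)]+[C^{\mathrm{cw}}(f^{-1})]=0$. Subtracting the two relations gives $[C^{\mathrm{acw}}(f)]=[C^{\mathrm{cw}}(f)]$, as desired.

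\textbf{Main obstacle.} The geometric input (both surfaces are easily seen to be embeddable) is not the issue; the delicate point is purely sign bookkeeping, namely verifying that the induced boundary orientations and induced monodromies of the two circles in each foam are exactly as stated, since a flipped sign would instead produce the useless relation $C^{\mathrm{acw}}(f)\cong C^{\mathrm{cw}}(f^{-1})$. As a sanity check one can compute the invariant $\widetilde f$ of Section~\ref{subsec_cob_classes}: for a round diagram both $C^{\mathrm{cw}}(f)$ and $C^{\mathrm{acw}}(f)$ have $\widetilde f=[f]$ (the single local maximum/minimum contributes a factor $\tau(P)$ which, with $\tau(P_B)=\tau(P)$, squares to $1$), consistent with cobordance. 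Alternatively one can skip the explicit second foam entirely and simply invoke that $-[U]$ is represented by $\rho(U)$ in $\Cob_1^1(R)$, applied to $U=C^{\mathrm{cw}}(f^{-1})$, to turn the first relation into the conclusion.
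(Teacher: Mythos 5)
Your proof is correct, and it takes a genuinely different route from the paper's.

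The paper's proof merges the two planar circles along a circular seam into a single circle decorated by $P\oplus P$ whose monodromy is $\begin{pmatrix}f&0\\0&f^{-1}\end{pmatrix}$; this matrix lies in $[\GL(R),\GL(R)]$ by the Whitehead lemma, so the big circle can be capped off by an embedded genus-$g$ surface realizing a commutator expression for it. The argument therefore uses a 2-foam with a singular locus (a seam circle) and invokes a basic algebraic fact about $\GL(R)$. Your proof uses no seams and no algebra: it produces two smooth embedded annulus null-cobordisms (the bent concentric "U-tube" giving $[C^{\mathrm{acw}}(f)]+[C^{\mathrm{cw}}(f^{-1})]=0$, and the half-rotation annulus giving $[C^{\mathrm{cw}}(f)]+[C^{\mathrm{cw}}(f^{-1})]=0$) and subtracts. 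What your route buys: it is strictly more elementary, makes no appeal to the Whitehead lemma, and makes visible that the proposition is really a formal consequence of how orientation reversal and planar reflection interact with monodromies; indeed, as you note, the half-rotation annulus is nothing more than the explicit embedded witness for the minus map of $\Cob_1^1(R)$, and one could equally replace it by the pants relation $[C_{\varepsilon}(P,f)]+[C_{\varepsilon}(P,f^{-1})]=0$ already stated in the paper's proof. What the paper's route buys: it exhibits, in this very first and simplest instance, the mechanism (merge along a seam, land in the commutator subgroup, cap) that drives the rest of the section, and its intermediate thick-facet circle serves as a useful picture even though it is not logically needed here. One small remark: the relation your tube establishes, $C^{\mathrm{acw}}(f)\sqcup C^{\mathrm{cw}}(f^{-1})\sim 0$, is the one appearing in the caption of Figure~\ref{fig8_021}, and is the form from which the conclusion $[C_+(P,g)]=[C_-(P,g)]$ follows cleanly after applying the minus-map or pants relation; your careful attention to the induced boundary orientations is exactly what is needed to pin this down. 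Finally, your sanity check via $\widetilde f$ is consistent, but as you seem aware it only shows equality of images under $\gammaoneR$ and so cannot by itself replace the geometric cobordism.
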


\begin{proof}
    Denote by $C_+(P,f)$, respectively $C_-(P,f)$ a clockwise, respectively counterclockwise, circle with monodromy $f\in \Aut(P)$. Then $C_\varepsilon(P,f)\sqcup C_\varepsilon(P,f')\sim C_\varepsilon(P,ff')$, for $\varepsilon\in\{+,-\}$,  so that $[C_{\varepsilon}(P,f)]=-[C_{\varepsilon}(P,f^{-1})]$ in the cobordism group. We claim that $C_+(P,f)\sqcup C_-(P,f)\sim 0$. Take the disjoint union $C_+(P,f)\sqcup C_-(P,f)$ of a clockwise and anticlockwise circles with monodromy $f$ on the plane and consider the following cobordism in $\R^2\times [0,1)$ with the boundary the union of the two circles above. The cobordism merges two annular facets extending these circles along the seam into a thick facet. The latter is decorated by $P^2$, with the monodromy $M=\begin{pmatrix}
        f & 0 \\ 0 & f^{-1}
    \end{pmatrix}$ along the boundary circle $C$ in Figure~\ref{fig8_021}.

\vspace{0.07in} 

\input{fig8_021}

\vspace{0.07in} 
    
Matrix $M$ belongs to the commutator subgroup $[\GL(R),\GL(R)]$, see~\cite{Sr}, and can be written as the product $M=\prod_{i=1}^g [x_i,y_i]$. Boundary circle $C$ in Figure~\ref{fig8_021} carrying monodromy $M$ can be capped off by an embedded surface $S_g$, disjoint from the rest of the foam and carrying a flat connection with fibers $P^2$, such that the monodromies around the handles of the surface are $[x_i,y_i]$, $1\le i\le g$. This shows that clockwise and anticlockwise oriented circles with the same fiber and monodromy are cobordant. 
\end{proof}

{\it Splitting off circles with monodromies.}
Pick two points $b,b'$ on an edge $e$ of a planar 1-foam $U$. The fibers are
isomorphic, and fix an isomorphisms of the fibers $P_b\cong P_{b'}$. This
isomorphism may be different from that delivered by the flat connection
between $b$ and $b'$ along the edge. Denote the difference in the two isomorphisms
by $f\in \Aut(P_b)$.

\vspace{0.07in}

\input{fig8_011}

\vspace{0.07in}

Consider a cobordism that splits off a circle with fiber $P=P_b$ and automorphism
$f$ from $U$, see Figure~\ref{fig8_011}. Depending on the region
adjacent to the edge $e$ into which a circle is pulled, it may be clockwise or counterclockwise
oriented, see also Proposition~\ref{prop_clock}. 

This circle $C=C_{\varepsilon}(P,f)$ lies in a particular region of the diagram $D$
of the modified foam $U$ (monodromy along $e$ is now different) but can be
freely moved across edges as follows. To move a circle $C$ across an adjacent edge $e'$, reverse orientation of $C$, if needed, to match that of $e'$. Pushing $C$ through $e'$, with $e'$ enveloping $C$ and splitting off the surround into a circle results in a pair of concentric circles, with the new circle $C'$ labelled $P'$ having the trivial monodromy and the same orientation as $C$, see the first move in Figure~\ref{fig8_012}.

\input{fig8_012}
 
 Using the \emph{circular tripod} cobordism from Figure~\ref{fig3_004},
circles $C$ and $C'$ are merged into a single circle $C''$ which carries the module $P\oplus P'$
with the monodromy given by the diagonal matrix $(f)\oplus (\id)$, see  the second move in Figure~\ref{fig8_012}.  Next, 
split $C''$ into two circles, with the fibers and  monodromy  $(P,f)$ and $(P',\id)$,
respectively, but such that $(P',\id)$ is now the inner circle and can be
removed via a cap cobordism. Orientation of the remaining circle  $C_{\pm}(P,f)$
can be flipped, if needed. Hence, moving a circle with monodromy $f$ between
different regions of $D$ does not change the cobordism class of the 1-foam.

Given a circle $C=C_{\varepsilon}(P,f)$, pick a decomposition $P\oplus P'\cong R^n$.
Generating a circle $C_{\varepsilon}(P',\id)$ inside $C$ and merging it with $C$
via the circle tripod results in a circle cobordant to $C$ with fiber $R^n$,
for some $n$. Given two adjacent circles with fibers and monodromies $(R^n,f)$ and
$(R^m,g)$, one can convert one of them to a cobordant circle so that the ranks
match. Say for $n\ge m$, the second circle is converted to $(R^n,g')$ for $g'=(f)\oplus \id^{n-m}$.
Flipping orientation, if necessary, adjacent circles $C=C_{\varepsilon}(R^n,f)$ and
$C_{\varepsilon}(R^n,g')$ are cobordant to $C_{\pm}(R^n,fg')$.

These operations allow to split off any monodromy from any edge of a 1-foam into
a separate circle, move this circle to the outside region of the foam $U\subset \R^2$,
merge these circles together always reducing to a single circle $C_{\varepsilon}(R^n,f)$
for some $n$ and $f\in \GL_n(R)$, with either orientation.
Note that $[C_{\varepsilon}(R^n,f)]$ is in the image of the homomorphism $\overline{\gamma}$ in
\eqref{eq_gamma_prime}.
We use such splits regularly in what follows
to assume trivial monodromies on various edges under consideration.

\vspace{0.07in} 

{\it Crossings.}
Next, consider more general planar 1-foams with crossings as in Figure~\ref{fig8_013}, where a crossing is a shorthand for the composition of a merge and split corresponding to the
transposition of terms. Crossings have been discussed earlier, see Figures~\ref{fig8_009} and~\ref{fig2_006}. 

\input{fig8_013}

\begin{prop}\label{prop_pairs}
     Pairs of planar 1-foams with crossings shown in Figure~\ref{fig8_014} are cobordant, with any monodromies on strands, possibly up to adding a circle with some monodromy. 
\end{prop}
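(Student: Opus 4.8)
The plan is to reduce the statement to the elementary cobordisms already established. Recall from Figure~\ref{fig8_013} that a crossing is \emph{by definition} the composition of a merge vertex with a split vertex transposing the two summands; hence each of the planar diagrams in Figure~\ref{fig8_014} is literally a genuine trivalent planar $1$-foam once the crossing shorthand is expanded, and ``cobordant'' means that the two expanded foams are the boundary of a common $R$-decorated $2$-foam. So the real content is to exhibit that $2$-foam.

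First I would normalize the monodromies. Using the splitting-off construction of Figures~\ref{fig8_011}--\ref{fig8_012}, any monodromy carried by a strand in the local picture can be pushed out along the edge into a small circle $C_\varepsilon(P,f)$ in an outer region, freely moved across edges, and these auxiliary circles on the two sides of the move coincide (or can be merged into one circle carrying the net monodromy). This is exactly the ``possibly up to adding a circle with some monodromy'' clause: after this normalization we may assume \emph{all} strands in the local tangle carry trivial monodromy, and the leftover circle contributes the same summand on both sides. What remains is a purely ``rigid'' problem about direct-sum decompositions.

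Next, for each pair in Figure~\ref{fig8_014} I would assemble the connecting $2$-foam from four ingredients: (i) isotopies of the underlying planar graph rel boundary, as in Figure~\ref{fig8_003}; (ii) the singular cup/cap and singular saddle cobordisms of Figure~\ref{fig6_008}, which create or annihilate a seam and reconnect strands; (iii) the circular-tripod / $\tripod\times[0,1]$ cobordisms of Figures~\ref{fig3_004}--\ref{fig3_005}, which merge two parallel facets along a seam and re-split them in a different order; and (iv) the single-vertex cobordism in the second row of Figure~\ref{fig6_008}, whose flat-connection datum is precisely the commutative square \eqref{eq_cd_proj_2} expressing associativity of the threefold decomposition $P_1\oplus P_2\oplus P_3$. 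Concretely, a doubled-crossing (Reidemeister~II-type) pair bounds the $2$-foam obtained by creating its two seams with a singular cup and annihilating them with a singular cap after the obvious isotopy; a triangle (Reidemeister~III-type) pair is exactly the ``two presentations of a vertex'' of Figure~\ref{fig6_008} and is bounded by the vertex cobordism, with compatibility \eqref{eq_cd_proj_2}; and a crossing-slides-past-a-vertex pair is handled by an isotopy plus one application of a $\tripod\times[0,1]$ cobordism to reorder the summands, with compatibility \eqref{eq_comm_flat}. In every case the direct-sum isomorphisms decorating the seams on the two sides differ by a canonical commutativity/associativity isomorphism of $\bigoplus$, which is precisely the datum realized by the chosen elementary cobordism, so the flat connections glue into a flat connection on the whole $2$-foam.

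The main obstacle is bookkeeping of the flat connections rather than topology: one must check that the monodromies and seam/vertex direct-sum isomorphisms prescribed on the assembled $2$-foam satisfy the compatibility conditions \eqref{eq_comm_flat} and \eqref{eq_cd_proj_2} at every singular stratum, and that after the normalization of Figures~\ref{fig8_011}--\ref{fig8_012} the two boundary foams agree on the nose up to the single auxiliary circle. Since each elementary cobordism used has already been verified to carry such a flat connection, this reduces to composing them carefully; the fact (proven above) that $\widetilde f$ is a cobordism invariant gives a convenient consistency check at each step, though here we need the stronger geometric conclusion that the foams are genuinely cobordant, which the explicit construction supplies.
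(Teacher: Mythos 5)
Your overall strategy—expand crossings, normalize monodromies by splitting them off into auxiliary circles, then reduce to the already-verified elementary cobordisms of Figure~\ref{fig6_008}—is the right spirit and matches the paper's. But there are two substantive gaps.

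First, you never address move $\I$, and this is where the ``possibly up to adding a circle'' caveat actually originates. You attribute the extra circle entirely to the monodromy-normalization step; but even if every strand in the local picture has trivial monodromy, untwisting the kink in move $\I$ produces an irreducible correction. When the self-crossing is expanded via Figure~\ref{fig8_013} the middle edge carries $P\oplus P$ with the transposition $\tau(P)$ as its monodromy; to cancel the two vertices via a singular saddle this transposition must first be pulled off into a separate circle $C_\pm(P^2,\tau(P))$, and that circle is the residue in the statement. Your normalization step removes monodromies that were already there; it does not explain why the two sides of move $\I$ differ by a circle even in the monodromy-free case.

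Second, your treatment of the Reidemeister~III-type move is not correct as stated. You claim the two sides of $\mathsf{IIIa}$ are ``exactly the two presentations of a vertex of Figure~\ref{fig6_008}.'' They are not: after expanding the three crossings via Figure~\ref{fig8_013}, each side of $\mathsf{IIIa}$ becomes a trivalent graph with six vertices (three merge-split pairs), whereas the second row of Figure~\ref{fig6_008} encodes the much smaller cobordism coming from a single foam vertex as in Figure~\ref{fig6_002}. You would need a chain of such elementary moves, not a single application. The paper sidesteps this complication with a uniform argument for the braid-like moves $\mathsf{IIa}$, $\mathsf{IIIa}$, $\mathsf{IVa}$, $\mathsf{IVb}$: after removing monodromies, both sides are shown to be cobordant to a common normal form in which all bottom strands merge into a single edge carrying the full direct sum and then split back out (Figure~\ref{fig8_015}); $\mathsf{IIIb}$ then follows formally from $\I$, $\mathsf{IIa}$, $\mathsf{IIIa}$ as for oriented Reidemeister moves. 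Adopting that normal-form reduction would repair the $\mathsf{III}$ case and, incidentally, dispose of $\mathsf{IIa}$, $\mathsf{IVa}$, $\mathsf{IVb}$ at the same time.
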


\input{fig8_014}

\begin{proof} In Figure~\ref{fig8_014} these moves are labelled by analogy with the Reidemeister moves, and two versions of moves $\mathsf{II}$, $\mathsf{III}$ and $\mathsf{IV}$ are shown depending on the orientation of the strands. 

For move $\I$, flatten the self-crossing of an edge according to the definition, see Figure~\ref{fig8_017}. The middle edge carries label $P^2$ and monodromy $\tau(P)$ given by the transposition of the two summands. To cancel out the two vertices via a singular saddle cobordism (Figure~\ref{fig6_008} top right), we pull out that monodromy into a separate circle. If the loop part of the original diagram (on the left) has a nontrivial monodromy, pull it out as well (not shown). After that, the self-crossing is cobordant to its simplification shown on the right, with the additional circle $C_{\pm}(P^2,\tau(P))$, which may also be denoted by a dot with that label, to indicate its contribution to the cobordism class of the 1-foam. 
We see that, modulo a circle with some monodromy, the two terms in the move $\I$ are cobordant. 

\vspace{0.07in} 

\input{fig8_017}

\vspace{0.07in}

Cobordism for the move $\mathsf{IIb}$ is shown in Figure~\ref{fig8_018}. Again, pulling out monodromies from the two edges that bound the middle region is not shown. The cobordism is given by the saddle, composed with two singular saddles, resulting in two circles, each of which is null-homotopic. 

\vspace{0.07in} 

\input{fig8_018}

\vspace{0.07in}

In the moves $\mathsf{IIa}$, $\mathsf{IIIa}$, $\mathsf{IVa}$, $\mathsf{IVb}$, the strands on both sides are oriented in a braid-like manner in the same direction. Upon removing nontrivial monodromies, both  diagrams are cobordant to the diagram where the bottom points  collect into a single edge, which then splits into top points, see Figure~\ref{fig8_015} for an example for the move $\mathsf{IVa}$.  

\vspace{0.07in} 

\input{fig8_015}
 
\vspace{0.07in}

Move $\mathsf{IIIb}$ follows from moves $\I$, $\mathsf{IIa}$, $\mathsf{IIIa}$ in the same way as for the Reidemeister moves of oriented link diagram. Proposition~\ref{prop_pairs} follows. 
\end{proof}

This proposition shows that an edge can be moved across any region of the plane in a sort of homotopy, see Figure~\ref{fig8_019} for an example. 

\input{fig8_019}

\begin{prop}\label{prop_cob_to_braid}
    Any planar $R$-decorated 1-foam is cobordant to the closure of a braid-like $R$-decorated 1-foam. 
\end{prop}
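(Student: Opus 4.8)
The plan is to reduce an arbitrary planar $R$-decorated 1-foam $U$ to braid-like form by a sequence of elementary cobordisms, all of which have already been shown (or are easily seen) to preserve the cobordism class. First I would put $U$ in general position with respect to the height ($y$-axis) projection, so that critical points are nondegenerate and the only features are local maxima, local minima, and trivalent vertices of \emph{in} and \emph{out} type, occurring at distinct heights. The goal is to eliminate all local extrema and all \emph{out} vertices below \emph{in} vertices, so that what remains is a single ``collect everything into one edge, then redistribute'' picture, i.e.\ a matched braid 1-foam closed up along its top and bottom --- and then observe that by splitting off monodromies into an external circle (using the splitting-off-circles construction and Proposition~\ref{prop_clock}) and by the crossing moves $\I$--$\mathsf{IVb}$ of Proposition~\ref{prop_pairs}, any such closed braid-like foam is already the closure of a genuine braid-like foam.

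The key steps, in order, would be: (1) Use a circle-creation cobordism followed by a circular-tripod cobordism (Figure~\ref{fig3_004}) near each local maximum to convert a cap into a ``merge'' vertex, and dually each local minimum into a ``split'' vertex, at the cost of an extra edge carrying a direct-sum decoration --- this is the analogue, for foams, of the standard move turning a Morse cap into a trivalent vertex, and its invariance under cobordism is immediate since it is built from the elementary cobordisms in Figure~\ref{fig6_008}. After this step $U$ has no extrema, only \emph{in} and \emph{out} vertices and (possibly) some isolated circles. (2) Absorb all isolated circles and all nontrivial monodromies on edges into a single external circle $C_\varepsilon(R^n,f)$ using the splitting-off construction described just before the statement, moving the circle outward across edges via the circular-tripod cobordism; this circle is itself the closure of a trivial braid 1-foam on one strand with monodromy $f$, so it does not obstruct the conclusion. (3) Now every edge carries trivial monodromy, and the foam is a trivalent oriented graph embedded in $\R\times[0,1]$ with only \emph{in}/\emph{out} vertices; using moves $\mathsf{IIa}$--$\mathsf{IVb}$ (Figure~\ref{fig8_014}) together with the singular saddle and singular cup/cap cobordisms (Figure~\ref{fig6_008}), slide all \emph{in} vertices below all \emph{out} vertices and cancel any adjacent \emph{out}--\emph{in} pair on a common edge, exactly as one straightens a tangle diagram; the net effect is to bring $U$ to the form: bottom endpoints merge (via a tree of \emph{in} vertices) into a single edge, which then splits (via a tree of \emph{out} vertices) into top endpoints. (4) Finally match the top and bottom $0$-foams: since the splitting of step (2) guarantees both the merged edge and all endpoint edges may be taken with fiber $R^n$ after stabilization, and the total composite isomorphism top-to-bottom is realized by the flat connection, $U$ is literally the closure of the matched braid-like 1-foam built from these two trees (plus the external circle from step (2)), as in Figure~\ref{fig2_007} on the right.

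The main obstacle I expect is step (3): controlling the rearrangement of vertices and extrema so that the process terminates, and checking that every local move used there is genuinely a cobordism rather than merely a homeomorphism. The moves $\mathsf{IIa}$, $\mathsf{IIIa}$, $\mathsf{IVa}$, $\mathsf{IVb}$ of Proposition~\ref{prop_pairs} hold only ``up to adding a circle with some monodromy,'' so each application potentially spawns an auxiliary circle; I would handle this by carrying along a single cumulative external circle $C_\varepsilon(R^n,f)$, merging every newly created circle into it via the circular-tripod cobordism and matrix stabilization, so that the bookkeeping stays finite. A secondary subtlety is orientation: the singular-saddle cancellation of an \emph{out}--\emph{in} pair requires the intermediate edge to carry the identity monodromy, which is exactly why step (2) must precede step (3); I would make sure the logical order of the reduction reflects this dependency. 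Apart from these points the argument is a routine ``normal form'' induction on the number of extrema plus the number of \emph{out}-before-\emph{in} inversions, each elementary move strictly decreasing this complexity or leaving the foam already in braid-closure form.

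\begin{proof}
We may assume $U$ is in general position for the height function, with nondegenerate critical points and all vertices and critical points at distinct heights. First, near each local maximum we create (by a circle-creation cobordism) a small circle with trivial monodromy just above the cap and merge it with the cap via a circular-tripod cobordism (Figure~\ref{fig3_004}), replacing the cap by an \emph{in}-type merge vertex with one new short edge; dually, each local minimum is replaced by an \emph{out}-type split vertex. All of these are built from the elementary cobordisms of Figure~\ref{fig6_008} and Figure~\ref{fig3_004}, hence preserve the cobordism class. After this step $U$ has no extrema.

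Next, using the splitting-off-circles construction (and Proposition~\ref{prop_clock} to fix orientations), we pull every nontrivial edge monodromy and every isolated circle off $U$ and merge them, via circular-tripod cobordisms and stabilization $P\oplus Q\cong R^n$, into a single external circle $C_\varepsilon(R^n,f)$ with $f\in\GL_n(R)$; by its construction $C_\varepsilon(R^n,f)$ is the closure of a one-strand braid-like foam with monodromy $f$, so it does not affect whether $U$ is a braid closure. Now all edges carry trivial monodromy.

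Finally we straighten the remaining trivalent graph. Applying the moves $\mathsf{IIa}$--$\mathsf{IVb}$ of Proposition~\ref{prop_pairs} (each time absorbing any spawned circle into the external circle as above) together with singular saddle and singular cup/cap cobordisms, we slide every \emph{in} vertex below every \emph{out} vertex and cancel each adjacent \emph{out}--\emph{in} pair lying on a common edge; since the intermediate edges have identity monodromy this cancellation is exactly the singular-saddle cobordism of Figure~\ref{fig6_008}. This process strictly decreases the number of extrema-removed vertices that are out of braid order, so it terminates. The result is a foam in which the bottom $0$-foam merges along a tree of \emph{in} vertices into one edge which then splits along a tree of \emph{out} vertices into the top $0$-foam. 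After stabilizing so that all these edges carry $R^n$, this is precisely the closure of the matched braid-like $1$-foam determined by the two trees and the composite flat-connection isomorphism from bottom to top, together with the external circle from the previous paragraph. Hence $U$ is cobordant to the closure of a braid-like $R$-decorated $1$-foam.
\end{proof}
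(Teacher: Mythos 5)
You take a different route from the paper's: the paper adapts the Alexander theorem by fixing a base point $p_0 \in \R^2\setminus U$ and sweeping anticlockwise edges around $p_0$ (via the move of Figure~\ref{fig8_019}, built out of the moves $\I$--$\mathsf{IVb}$ of Proposition~\ref{prop_pairs}), turning one anticlockwise edge into two clockwise ones at a time; you attempt instead a Morse normal form with respect to the height projection.

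Your Step~1 contains a fatal gap. The claim ``after this step $U$ has no extrema'' cannot hold for any nonempty closed planar 1-foam: the height function always attains a global maximum, and that maximum cannot lie at a trivalent vertex (every in- or out-vertex has an adjacent edge pointing upward), so there is always a cap on some open edge, and dually always a cup. Indeed the target normal form itself --- the closure of a braid-like 1-foam --- has caps and cups in its closure arcs, and your Step~4 silently reintroduces them, contradicting the end of Step~1. The cobordism you invoke in Step~1 is also not well-defined as stated: the circular-tripod cobordism of Figure~\ref{fig3_004} is $\tripod\times\SS^1$ and merges two closed circles into one, not a circle with an arc, and replacing a cap (an arc meeting a horizontal level in two points) with an in-vertex plus a new short edge changes the number of intersection points at that level from two to three, which is not a legal local modification inside a closed 1-foam. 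To repair a Morse-style argument you should replace ``no extrema'' with the weaker normal form ``all caps above and all cups below a braid box'' (equivalently, all edges wind clockwise around a base point, the criterion the paper uses), and supply a termination argument --- for instance an induction on the number of anticlockwise-wound edges, which is exactly what the paper's sweeping move of Figure~\ref{fig8_022} reduces one at a time.
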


\begin{proof} 
The proof is analogous to a proof of the Alexander theorem that any oriented link is the closure of some braid. Start with a planar diagram $D$ representing $R$-decorated planar 1-foam $U$. Choose a base point $p_0$ disjoint from $D$. We can assume that $D$ is piecewise-linear (PL) and a union of intervals.  An end of an interval is either
a trivalent vertex or a bivalent vertex. The latter vertices simply indicate the change
of an angle along a single edge.
An interval may cross
one or more other intervals.
Furthermore, we can assume that thin edges at each trivalent vertex have a small angle between them. 

Rotate a neighbourhood of each vertex so that orientations of the three edge segments at each vertex is clockwise relative to $p_0$. We can assume that each edge of the resulting diagram $D'$ is positioned either clockwise or counterclockwise with respect to $p_0$, so that the line through any edge does not go through $p_0$. 

The next step is to reduce the number of counterclockwise segments by one converting diagram $D'$ to a cobordant diagram with one less counterclockwise edge. Pick a such an edge $I$ of $D'$ and consider the triangle $T$ formed by $I$ and $p_0$, see Figure~\ref{fig8_022} on the left. The triangle may contain some edges and vertices of $D'$. Form a slightly larger triangle $T'$ and use the move in Figure~\ref{fig8_019} to deform edge $I$ into the complementary part of the boundary of $T'$, see Figure~\ref{fig8_022} on the right.  That part is a composition of two edges that are both oriented clockwise around $p_0$. The deformation may produce an additional circle with a monodromy, which can then be positioned clockwise piecewise-linearly around $p_0$. 

\vspace{0.07in} 

\input{fig8_022}

Iterating this transformation eventually gets rid of all counterclockwise edges and coverts $D'$ into the closure $\widehat{T}$ of a braid-like 1-foam $T$. Planar 1-foams $U$ and $\widehat{T}$ are cobordant, $U\sim \widehat{T}$. 
\end{proof}

\begin{prop}\label{prop_closure}
     Closure $\widehat{T}$ of a braid-like planar $R$-decorated 1-foam $T$ is cobordant to a single circle with some monodromy $f$ via a braid-like 2-foam cobordism in $\R^2\times [0,1]$. 
\end{prop}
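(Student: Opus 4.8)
The plan is to realize the closure $\widehat{T}$ as a 1-foam that is, up to cobordism, a "stacked" composition of braid-like generators, and then to collapse the whole braid layer by layer into a single circle. First I would set up notation: write $T$ as a braid-like 1-foam with bottom boundary $(M_1,\dots,M_m)$ and top boundary $(N_1,\dots,N_n)$, together with the matching isomorphisms $N_i\cong M_i$ used to form the closure $\widehat{T}$ (so necessarily $m=n$); see Figure~\ref{fig2_007}. Using the remarks on splitting off circles with monodromies (the "Splitting off circles with monodromies" discussion preceding Figure~\ref{fig8_011} and Figure~\ref{fig8_012}), I would first reduce to the case where all the closure intervals carry the trivial monodromy, at the cost of introducing an auxiliary circle $C_\varepsilon(R^k,g)$ that sits in the outer region of the plane; that circle is harmless since it can be merged back in at the end via a circular tripod cobordism.

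Next I would induct on the number of vertices of $T$. If $T$ has no vertices it is a disjoint union of braid strands, so $\widehat{T}$ is a disjoint union of circles decorated by projective modules with monodromies; repeatedly applying the pants cobordism of Figure~\ref{fig3_002} (and its interpretation of relation (1)) merges these into a single circle with the product monodromy, which is exactly the desired conclusion. For the inductive step, I would use the presentation of a braid-like 1-foam as a vertical composition of elementary slices, each slice being either a permutation of strands (a crossing, Figure~\ref{fig8_013}), a merge vertex (two thin strands $M_1,M_3$ going into a thick strand $M_1\oplus M_3$), or a split vertex. Pick the topmost vertex $v$ of $T$. Using the braid moves $\mathsf{I}$--$\mathsf{IVb}$ of Proposition~\ref{prop_pairs} one can slide $v$ upward through all the crossings above it until it is adjacent to the top boundary, hence adjacent (through the closure arcs) to the corresponding bottom vertex; then a merge immediately followed by a split (or vice versa) along the same seam is cancelled by a singular saddle / singular cup--cap cobordism (top row of Figure~\ref{fig6_008}), possibly spawning another outer circle. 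This strictly decreases the vertex count, and the induction hypothesis finishes the argument.

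The main obstacle I anticipate is the bookkeeping in the inductive step: making precise that a topmost vertex can always be slid to the top boundary using only the moves of Proposition~\ref{prop_pairs} and the isotopies of Figure~\ref{fig8_003}, without creating new vertices in the process, and keeping careful track of the monodromies (and the auxiliary outer circles) that accumulate under these moves. In particular one must check that when a merge vertex meets a split vertex across a closure arc the two $\oplus$-decompositions $M_1\oplus M_3\cong P\cong M_1'\oplus M_3'$ are compatible up to the flat connection along that arc, so that the cancellation really is a singular saddle cobordism as in Figure~\ref{fig6_008}; this is exactly the flat-connection compatibility condition at a seam, diagram~\eqref{eq_comm_flat}, so it causes no genuine difficulty, only careful verification. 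Once all vertices are removed, $\widehat{T}$ has become a disjoint union of decorated circles together with the accumulated outer circles, and a final sequence of circular tripod and pants cobordisms merges everything into one circle $C_\varepsilon(R^N,f)$, as claimed.
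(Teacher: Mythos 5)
Your inductive step has a genuine gap: there is no argument that, after sliding the topmost vertex $v$ of $T$ past the crossings above it, the closure arcs deliver $v$ to a partner vertex of the right type. A merge vertex $v$ near the top has a thick edge reaching one top boundary point and two thin edges reaching two other top boundary points; the corresponding closure arcs connect these to three bottom boundary points of $T$, which in general continue into three unrelated parts of the braid. For example, if $T$ on three strands reads (bottom to top) $\mathrm{merge}(1,2)$, $\mathrm{split}(1)$, $\mathrm{merge}(2,3)$, $\mathrm{split}(2)$, then the outputs of the topmost vertex reach, across the closure, the inputs of two \emph{different} merges, so no singular cup/cap or singular saddle of the form in the top row of Figure~\ref{fig6_008} applies. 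Those cobordisms require a split and merge to share a seam and a pair of thin edges in a very specific local configuration, and nothing in your sliding guarantees such a pair. (A minor secondary point: the pants cobordism of Figure~\ref{fig3_002} merges two circles carrying the \emph{same} projective module $P$; in your base case, circles with different fibers must instead be merged via the circular tripod cobordism of Figure~\ref{fig3_004}.)

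The paper avoids this problem with a pre-processing step that your argument does not have. It first contracts all $n$ closure arcs of $\widehat T$ into a single arc decorated by $P := P_1\oplus\cdots\oplus P_n$ via the cobordism of Figure~\ref{fig08_024}. After that, $T$ has a single bottom strand and a single top strand, both labeled $P$; hence, reading $T$ from bottom to top, the strands only fan out by splits before any merge occurs, so the \emph{first} merge has both of its thin edges hanging off earlier splits. Either they come off the same split, cancelling by a saddle (Figure~\ref{fig08_025}, left), or off two different splits, in which case the four-seam vertex cobordism (Figure~\ref{fig08_025}, right) pushes that merge strictly downward. This gives a decreasing complexity measure and the process terminates in a single closed strand. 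For your proof to close you would need to incorporate this single-strand reduction, or supply a separate argument that a topmost vertex always finds a compatible cancellation partner across the closure.
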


\begin{proof}
    Consider $T$ is above and its closure $\widehat{T}$. We can assume that $\widehat{T}$ goes around a central point $p_0$. 
    Let $P_1, \dots, P_n$ be the fibers at the endpoints of $T$. These are also the intersection points with 
    a ray from $p_0$ through $\widehat{T}$ that cuts it into $T$. 
    Consider a cobordism that contracts $n$ parallel segments decorated by $P_1,\dots, P_n$ into a single segment decorated by $P:=P_1\oplus\ldots \oplus P_n$, see Figure~\ref{fig08_024}. 

    \vspace{0.07in}

\input{fig08_024}

    \vspace{0.07in}

 Now we can assume that braid $T$ starts and ends in a single strand with the fiber $P$ over it. Braid $T$ starts at the bottom, where the $P$ line makes some splits, reading from bottom to top. Continuing to read $T$ in that direction, consider the first merge that we encounter in $T$. The two edges $e_1,e_2$ that merge must have splits at their other endpoints. There are two possibilities, shown in Figure~\ref{fig08_025}: 
 \begin{enumerate}
     \item\label{item:meet} $e_1,e_2$ meet each other at the other end,
     \item\label{item:other_splits} $e_1,e_2$ are parts of other splits at their lower endpoints. 
 \end{enumerate}

    \vspace{0.07in}

\input{fig08_025}
 
In case~\eqref{item:meet}, the split-merge 1-foam with edges $e_1,e_2$ in the middle, as shown in Figure~\ref{fig08_025} on the left is cobordant to a single strand 1-foam. The latter foam  simplifies $T$ by canceling a split and a merge. 
In case~\eqref{item:other_splits}, via a 2-foam cobordism that creates a vertex, see Figure~\ref{fig08_025} on the right (and also Figure~\ref{fig6_008} second row on the right),  
foam $T$ is modified into a foam $T'$ with the same number of merges and splits as $T$ but with the merge now occurring below that of $T$. Continuing to move the merge lower via such cobordisms, eventually one reaches a point where the merge cancels with a split as in the case~\eqref{item:meet} above. 

Iterating these cobordisms allows to eventually cancel all merges with splits, showing that $T$ is cobordant 
to a vertical line with the fiber $P$ via a braid-like 2-foam cobordism in $(\R^2\setminus \{p_0\})\times [0,1]$. 
\end{proof}

\begin{theorem}\label{thm_gamma_iso}
    The homomorphism $\gammaoneR$ is an isomorphism 
    \begin{equation}
        \label{eq_gamma_hom_2}
         \gammaoneR\ :\ \Cob_{1}^1(R) \stackrel{\cong}{\lra} K_1(R).
    \end{equation}
\end{theorem}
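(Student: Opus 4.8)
The plan is to deduce the theorem from the reduction results already assembled in this section, by showing that the section $\gammaoneRbar$ of \eqref{eq_gamma_prime} is surjective. Granting this, the identity $\gammaoneR\,\gammaoneRbar=\id$ recorded in \eqref{eq_dir_summand} shows that $\gammaoneRbar$ is a bijection, hence an isomorphism of abelian groups, with inverse $\gammaoneR$; this is exactly the assertion \eqref{eq_gamma_hom_2}. Thus the entire problem reduces to the claim that every cobordism class in $\Cob^1_1(R)$ is represented by a single clockwise circle $C_+(P,f)$ carrying a finitely generated projective module $P$ and a monodromy $f\in\Aut(P)$, i.e.\ by a foam lying in the image of $\gammaoneRbar$ (which is well-defined, as was checked when constructing $\gammaoneRbar$).

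To establish this claim I would take an arbitrary $R$-decorated planar $1$-foam $U$ and run the two main reduction steps in order. First, Proposition~\ref{prop_cob_to_braid} yields a cobordism $U\sim\widehat{T}$ to the closure of a braid-like $R$-decorated $1$-foam $T$; any auxiliary circles with monodromies that are spun off during the Reidemeister-type moves $\I$–$\mathsf{IVb}$ of Proposition~\ref{prop_pairs} or the triangle-reduction of Figure~\ref{fig8_022} are harmless, since a circle-with-monodromy is itself the closure of a single-strand matched braid, so they may be absorbed as additional parallel strands of $T$. Second, Proposition~\ref{prop_closure} produces a braid-like $2$-foam cobordism $\widehat{T}\sim C(P,f)$ to a single circle with fiber $P=P_1\oplus\cdots\oplus P_n$ and some $f\in\Aut(P)$. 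Composing the two cobordisms gives $[U]=[C(P,f)]$ in $\Cob^1_1(R)$, and Proposition~\ref{prop_clock} lets us reverse the orientation if necessary so that the representing circle is clockwise, i.e.\ equals $\gammaoneRbar([f])$. Since $U$ was arbitrary, $\gammaoneRbar$ is surjective and the theorem follows.

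I expect the main difficulty to be bookkeeping rather than any genuinely new construction. Both Proposition~\ref{prop_pairs} and Proposition~\ref{prop_cob_to_braid} hold only up to the addition of a circle with some monodromy, so one must check that the finite collection of auxiliary circles generated while braiding $U$ can always be relocated to the outer region of $\R^2$, have their orientations normalized via Proposition~\ref{prop_clock}, be stabilized by adjoining $\id$ on a complementary projective summand, and be merged — through circular tripod cobordisms as in Figure~\ref{fig3_004} — with the single circle coming out of Proposition~\ref{prop_closure}, all by honest $R$-decorated $2$-foam cobordisms in $\R^2\times[0,1]$ that fit together coherently. Each of these moves was described individually in Section~\ref{subsec_cob_classes}; the remaining work is to verify that nothing obstructs collapsing a general planar $R$-decorated $1$-foam all the way down to one decorated circle, and to keep the clockwise/anticlockwise orientation conventions used in defining $\gammaoneRbar$ consistent throughout.
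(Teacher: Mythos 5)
Your proposal is correct and follows exactly the paper's own argument: reduce via $\gammaoneR\gammaoneRbar=\id$ to surjectivity of $\gammaoneRbar$, then apply Proposition~\ref{prop_cob_to_braid} followed by Proposition~\ref{prop_closure} to collapse an arbitrary planar $1$-foam to a single decorated circle. The auxiliary-circle bookkeeping you flag as the "main difficulty" is in fact already handled inside the proof of Proposition~\ref{prop_cob_to_braid}, where spun-off circles are repositioned clockwise around $p_0$ and absorbed into the braid closure, so no extra merging step is needed after Proposition~\ref{prop_closure}.
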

\begin{proof} 
Recall the homomorphism $\gammaoneRbar$ in \eqref{eq_gamma_prime} sending $[f]$ for $f\in \GL_n(R)$ to the cobordism class of the clockwise oriented circle labelled $R^n$ with monodromy $f$. Since $\gammaoneR\gammaoneRbar=\id$ by \eqref{eq_dir_summand}, it suffices to show that $\gammaoneRbar$ is surjective. Any planar $R$-decorated 1-foam $U$ is cobordant to the closure $\widehat{T}$ of a braid-like foam $T$, see Proposition~\ref{prop_cob_to_braid}. In turn, $\widehat{T}$ is cobordant to $[f]$, for some $[f]$, see Proposition~\ref{prop_closure}. 

Thus, $[U]\sim [f]$ for some $n$ and $f\in\GL_n(R)$, so that $\overline{\gamma}_{1,R}$ is surjective. 
\end{proof}

\begin{remark} 
The group $\Cob_1^1(\mcC)$ of (oriented) planar $\mcC$-decorated 1-foams modulo cobordisms can be defined for any exact category $\mcC$.  While in Theorem~\ref{thm_gamma_iso} foam are thin-unoriented (at vertices for 1-foams and along the seams for 2-foams), 
to define $\Cob_1^1(\mcC)$ one needs to switch to thin-oriented foams. 
One then expects Theorem~\ref{thm_gamma_iso} to generalize to an isomorphism $\Cob_1^1(\mcC)\cong K_1(\mcC)$ with the first K-group of $\mcC$. The latter group has been well understood in the work of A.~Nenashev~\cite{Ne1,Ne2,Ne3} and C.~Sherman~\cite{Sh1,Sh2,Sh3}. 
In particular, it is shown in~\cite{Ne2} that $K_1(\mcC)$ is generated by elements associated to pairs of short exact sequences 
$M_1$\stackanchor[1pt]{$\lra$}{$\lra$}$M_2$\stackanchor[1pt]{$\lra$}{$\lra$}$M_3$, 
known as double short exact sequences (double SES or DSES).
In turn, these correspond to planar theta-foams in Figure~\ref{fig8_016} on the left, where the fibers over the marked points are $M_1,M_2,M_3$ and the two short exact sequences are associated with the two vertices. It would be interesting to understand Nenashev's relations~\cite{Ne2,Ne3} on these generators in the language of cobordisms between planar 1-foams. 

\input{fig8_016}

Sherman~\cite{Sh3} shows that \emph{mirror image sequences}, that is, pairs of SES $M_1\lra M_2\lra M_3$ and $M_3\lra M_2\lra M_1$, generate $K_1(\mcC)$ and writes down a full set of relations on these generators. Mirror image sequences correspond to theta-foams with an additional crossing, see Figure~\ref{fig8_016} on the right, where now at the two vertices the sequences go in the opposite directions. 
\end{remark}

\begin{remark}\label{rmk_unoriented}
One can work with unoriented foams with a flat connection, giving a variation on the corresponding cobordism groups. For example, consider unoriented 1-foams embedded in $\R^2$ with a flat connection in $R\mathrm{-pmod}$, modulo cobordisms. In these foams one can, in particular, create lollipops as shown in Figure~\ref{fig08_026} on the left. 

\input{fig08_026}

Denote the corresponding group by $\UCob_1^1(R)$. It would be interesting to identify it in the language of K-theory. 

Consider a rather special case when $R$ is a $II_1$-factor~\cite{AP20}. Then isomorphism classes of finitely-generated projective $R$-modules are parametrized by non-negative real numbers, 
$a\mapsto P_a$, for $a\in \R_{+}$, with isomorphisms $P_a\oplus P_b\cong P_{a+b}$ and $P_1\cong R$. Under the isomorphism $K_0(R)\cong \R$, the symbol $[P_a]$ is mapped to $a\in \R_+$.  Given an unoriented $R$-decorated planar 1-foam $U$, each edge of $U$ can be cut using a cobordism into two lollipops as in Figure~\ref{fig08_026} in the middle, since any finitely generated projective module is divisible by two, $P_a\cong P_{a/2}\oplus P_{a/2}$. In this way 1-foam $U$ is cobordant to a union of tripods  
 shown in Figure~\ref{fig08_026} on the right, which depend on $a,b\ge 0$ and a choice of isomorphisms, including $P_{a}\oplus P_{b}\cong P_{a+b}$. 

 The paper~\cite{IK24_SAF} investigated cobordism group $\Cob^{1,\mathsf{up}}_{\R_{>0}}$ of planar unoriented 1-foams where edges carry non-negative weights $a$ and described a surjection 
 \begin{equation}
     \Cob^{1,\mathsf{up}}_{\R_{>0}} \lra \R\wedge_{\Q}\R
 \end{equation}
 with the kernel consisting of order 2 elements. 
 
 There is a natural forgetful map from the cobordism group of $R$-decorated foams to the corresponding group of foam cobordisms where only the weights are remembered: 
\begin{equation}
     \UCob_{1,R}^1\lra \Cob^{1,\mathsf{up}}_{\R_{>0}}
 \end{equation}
 This map is surjective, clearly, resulting in a surjection from $\UCob_{1,R}^1$ onto $\R\wedge_{\Q}\R$ for any $II_1$-factor $R$. For comparison, we have an isomorphism of groups $K_1(R)\cong \R^{\ast}$, via the theory of Fuglede--Kadison determinants~\cite{dlH13}.
 It may be interesting to investigate groups of $R$-decorated foams cobordism for $II_1$ factors, in different dimensions and for various types of foams. A related question is whether decorating edges of train tracks on surfaces by finitely generated $R$-modules instead of weights can be interesting for the Teichm\"uller theory. 
\end{remark}

%%%%%%%%%%%%%%
% Quotient of K1
%%%%%%%%%%%%%%%

\subsection{One-foam cobordisms and a quotient of \texorpdfstring{$K_1(R)$}{K1(R)}}\label{one_f_cobs}

In this section we identify the cobordism group of $R$-decorated 1-foams, not embedded anywhere, with a certain quotient of $K_1(R)$. Recall homomorphism $\tau:K_0(R)\lra K_1(R)$ in  \eqref{eq_tau}, with the image $\tau(K_0(R))$ lying in the subgroup of order two elements of $K_1(R)$. 

\begin{theorem}\label{thm_cob_quotient}
The cobordism group of $R$-decorated 1-foams is naturally isomorphic to the quotient
\begin{equation}
\Cob_1(R)\  \cong \ K_1(R)/\tau(K_0(R)).
\end{equation}
\end{theorem}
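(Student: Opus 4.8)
The plan is to build on Theorem~\ref{thm_gamma_iso}, which already identifies $\Cob_1^1(R)$ with $K_1(R)$ via $\gammaoneR$, and to understand precisely what is lost when one forgets the embedding of a 1-foam in $\R^2$. The forgetful homomorphism $\for:\Cob_1^1(R)\lra \Cob_1(R)$ of the commutative diagram \eqref{eq_CD} is evidently surjective, since any $R$-decorated 1-foam can be drawn in the plane (with virtual crossings). So the first task is to show that $\for$ is surjective and then identify its kernel, after which we will define $\gammaoneRprime$ as the map induced by $\gammaoneR$ on the quotient $\Cob_1^1(R)/\ker(\for)$ and check it lands in $K_1(R)/\tau(K_0(R))$.

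First I would show that the composite $q_\tau\circ\gammaoneR:\Cob_1^1(R)\lra K_1(R)/\tau(K_0(R))$ factors through $\for$, i.e. vanishes on foams that become null-cobordant once the embedding is forgotten. The key new cobordism available in the non-embedded setting is exactly the one that realizes a planar crossing-change: given a circle $C$ with fiber $P$ and a crossing, a non-planar cobordism lets one swap the two thin facets at a seam, and by the computation already done in Proposition~\ref{prop_add_point} and the proof of Theorem~\ref{thm_gamma_iso} the effect of such a swap on the invariant $\widetilde f$ is multiplication by $\tau(P)$ for the relevant projective module $P$. Concretely, in a non-embedded 2-foam one is allowed a seam where the thin facets are \emph{unordered}, and two planar 1-foams that differ by reordering the two thin edges at a vertex labeled $P_1\oplus P_2\cong P_2$ become cobordant; tracking $\widetilde f$ through this shows that $\gammaoneR$ of the difference is $\tau(P_1\oplus P_2)$ up to a product of $\tau$'s of summands, hence lies in $\tau(K_0(R))$. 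This gives a well-defined surjection $\Cob_1(R)\lra K_1(R)/\tau(K_0(R))$, which I would call $\gammaoneRprime$.

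Next I would construct an inverse. Using $\gammaoneRbar:K_1(R)\lra \Cob_1^1(R)$ from \eqref{eq_gamma_prime}, composed with $\for$, I get a map $K_1(R)\lra \Cob_1(R)$ sending $[f]$, $f\in\GL_n(R)$, to the cobordism class of a circle labeled $R^n$ with monodromy $f$ (now not embedded anywhere). The point is that this map kills $\tau(K_0(R))$: the circle labeled $P\oplus P$ with monodromy the transposition matrix $\left(\begin{smallmatrix}0&1\\1&0\end{smallmatrix}\right)$ is null-cobordant \emph{as a non-embedded foam}, because one can build a disk-like 2-foam whose single seam performs the swap — there is no planarity obstruction to untwisting the ribbon. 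So the map descends to $\overline{\gamma}'_{1,R}:K_1(R)/\tau(K_0(R))\lra \Cob_1(R)$. Then $\gammaoneRprime\circ\overline{\gamma}'_{1,R}=\id$ follows from $\gammaoneR\gammaoneRbar=\id$ in \eqref{eq_dir_summand}, and $\overline{\gamma}'_{1,R}\circ\gammaoneRprime=\id$ follows from the fact, proved via Propositions~\ref{prop_cob_to_braid} and~\ref{prop_closure} (which do not use the embedding in an essential way beyond what survives in $\Cob_1(R)$), that every $R$-decorated 1-foam is cobordant to a single circle with some monodromy.

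The main obstacle I expect is the bookkeeping in the first step: verifying carefully that the \emph{only} extra relation introduced by passing from embedded to abstract 1-foams (equivalently, from thin-ordered to thin-unordered seams) is exactly $\tau(K_0(R))$, and nothing more. One has to argue that an arbitrary abstract cobordism between two planar foams can be decomposed into the elementary pieces of Figure~\ref{fig6_008} together with seam-reorderings, and that each seam-reordering contributes precisely a $\tau$-class. The subtlety is controlling how $\tau'(U)$ (the cup/cap correction term in \eqref{eq_wt_f}) interacts with these moves, and ensuring that the ambiguity in choosing isomorphisms $P\oplus Q\cong R^n$ when passing to $\GL(R)$ does not introduce additional $2$-torsion beyond $\tau(K_0(R))$. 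Once that is pinned down, the rest is formal from the already-established $n=1$ isomorphism.
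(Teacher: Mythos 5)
Your overall plan mirrors the paper's: construct a projection $\gammaoneRprime:\Cob_1(R)\to K_1(R)/\tau(K_0(R))$ and an inverse $\gammaoneRbarprime$ defined by the circle construction, then verify they are mutually inverse via the braid-closure argument of Propositions~\ref{prop_cob_to_braid}--\ref{prop_closure}. The difference is more in packaging than in substance: the paper defines $\gammaoneRprime$ directly from a strong cut as $[f_B]$ modulo $\tau(K_0(R))$ (so the correction term $\tau'(U)$ never appears) and verifies cobordism-invariance via a Markov theorem for braid-like 1-foams, whereas you try to leverage Theorem~\ref{thm_gamma_iso} by factoring $q_\tau\circ\gammaoneR$ through $\for$. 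Your worry about $\tau'(U)$ resolves itself immediately — it is a product of $\tau$'s and so dies in the quotient — so that part is a non-issue.

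Where you do need to be more careful is in your description of the \emph{new} relations available after forgetting the embedding. You attribute them to ``reordering the two thin edges at a vertex'' or ``allowing unordered thin facets at a seam'': but for $R$-decorated foams the thin edges/facets are already unordered on \emph{both} sides (see the remark after \eqref{eq_gamma_hom}), so that move is a non-move. The genuinely new 2-foams are the ones whose thin facets, followed around a circular seam, return swapped — a ``Möbius circular tripod'' whose thin facet is a single annulus double-covering the seam. This is perfectly valid as an abstract oriented 2-foam but cannot be embedded in $\R^2\times[0,1]$, and its boundary is exactly the circle $(P\oplus P,\sigma)$ together with a trivially-monodromous circle that can be capped. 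That is the precise content of your ``no planarity obstruction to untwisting the ribbon,'' and it is the key null-cobordism that both your argument and the paper's (via the claim that $\gammaoneRbarprime$ is ``clearly well-defined'') depend on. Once you recast the kernel analysis in terms of this Möbius tripod phenomenon rather than thin-orderings, your factorization argument becomes the same computation the paper carries out via the Markov stabilization (Figure~\ref{fig8_023}), which multiplies $f_{\widehat{T}}$ by $\tau(P_n)$ — so the two routes are equivalent, and the proposal is essentially correct.
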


\begin{proof} Given an $R$-decorated 1-foam $U$, pick a strong cut on $U$
and assign to $U$ the element $[f_B]\in K_1(R)=\GL(R)/[\GL(R), \GL(R)]$, where $f_B\in \GL(R)$ is given in \eqref{eq_map_f}. Changing a strong
cut replaces $f_B$ by an element of $\tau(K_0(R))$, since adding
a point $b$ to a strong cut multiplies $f_B$ by $\tau([P_b])$.
This gives a well-defined map from 1-foams to elements of the quotient group,
which is invariant under foam cobordisms:
\begin{equation}\label{eq_gamma_1_R}
\gammaoneRprime \ : \ \Cob_1(R) \ \lra K_1(R)/\tau(K_0(R)).
\end{equation}
The same map can be obtained by picking a braid closure presentation of $U$,
that is, writing $U = \widehat{T}$, for some braid-like 1-foam $T$. Foam
$T$ is a braid that starts and ends at points with fibers $P_1, \dots, P_n$,
for some projective modules. To $T$ there is assigned an automorphism
$f_T$ of the direct sum of these modules, and to the closure of $T$ assign the corresponding element $f_{\widehat{T}}$
of $K_1(R)$. Markov theorem for braid-like 1-foams and their closures is
easy to derive, since these foams are graphs with decorations, not embedded
anywhere. Note that the Markov theorem for oriented graphs embedded in $\R^3$ (a harder result) was established in~\cite{Is,CCD}. The Markov move of adding a transposition $s_n$
with $P_n$ as the fiber to a braid-like foam with $n$ bottom endpoints changes
$f_{\widehat T}$ by $\tau(P_n)$, see Figure~\ref{fig8_023}. Other Markov moves as well as generating
cobordisms between the foams, which are easy to convert to braid closure form,
do not change $\overline{f}_T$. Denote the image of $f_{\widehat{T}}\in K_1(R)$ under
the quotient map $K_1(R)\lra  K_1(R)/\tau(K_0(R))$ by $\overline{f}_{\widehat{T}}$. This
element is an invariant of 1-foam cobordism, giving a well-defined map
$\gammaoneRprime$ in \eqref{eq_gamma_1_R}.

\input{fig8_023}

A map
\begin{equation}
\gammaoneRbarprime: K_1(R)/\tau(K_0(R))\lra \Cob_1(R)
\end{equation}
is given by taking an element $[\overline{f}]$ of $K_1(R)/\tau(K_0(R))$ and lifting it to an element $[f]\in K_1(R)$
coming from an element $f\in \GL_n(R)$. The map takes $[\overline{f}]$
to the circle cobordism with fiber $R^n$ and monodromy $f$. This map is clearly well-defined. Surjectivity of  $\gammaoneRbarprime$ follows from the surjectivity of the corresponding map \eqref{eq_gamma_prime} in the planar case, see Proposition~\ref{prop_cob_to_braid}.  
Map $\gammaoneRprime$ is  surjective as well, and $\gammaoneRprime\gammaoneRbarprime=\id$. 
   
This implies that $\gammaoneRprime,\gammaoneRbarprime$ are mutually-inverse isomorphisms
of abelian groups.
\end{proof}

The two isomorphisms in Theorems~\ref{thm_cob_quotient} and~\ref{thm_gamma_iso}, together with respective quotient maps, can be arranged into the commutative diagram \eqref{eq_CD}.

%%%%%%%%%%%%%%%%%%%%%
% K2 and cobordisms between 2-foams
%%%%%%%%%%%%%%%%%%%%%

\subsection{\texorpdfstring{$K_2$}{K2} and the cobordism group of 2-foams}
\label{subsec_K2} 

Recall that the group 
\[
E(R)\ :=\ [\GL(R), \GL(R)]
\]
is perfect and there is a short exact sequence~\cite{Sr}
\[
0 \lra K_2(R) \lra \mathsf{St}(R) \lra E(R)\lra 0
\]
describing the universal central extension of $E(R)$. Here $\mathsf{St}(R)$ is the Steinberg group and 
\begin{equation}
K_2(R) \ \cong \ \mathrm{H}_2(E(R),\Z). 
\end{equation} 

Recall a result from Zimmermann~\cite{Zi}, also see~\cite[Section 11]{BMV} and~\cite{CF64}. 
\begin{prop}\label{prop_Zimm}
    For a group $G$ there is a natural isomorphism 
    \[
    \mathrm{H}_2(G) \ \cong \ \Omega_2(K(G,1)).
    \]
\end{prop}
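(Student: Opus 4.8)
The statement is the classical identification of the second integral homology of a group $G$ with the second oriented bordism group of an Eilenberg--MacLane space $K(G,1)$. The plan is to reduce everything to the Atiyah--Hirzebruch spectral sequence (AHSS) for the oriented bordism homology theory $\Omega_*^{SO}$ applied to $X = K(G,1)$, together with the fact that $\Omega_*^{SO}$ vanishes in low positive degrees. First I would recall the coefficients: $\Omega_0^{SO} = \Z$, $\Omega_1^{SO} = 0$, $\Omega_2^{SO} = 0$, $\Omega_3^{SO} = 0$ (and $\Omega_4^{SO} = \Z$, irrelevant here). The AHSS has $E^2_{p,q} = \mathrm{H}_p(X; \Omega_q^{SO}) \Rightarrow \Omega_{p+q}^{SO}(X)$. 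In total degree $2$ the only possibly nonzero $E^2$ term is $E^2_{2,0} = \mathrm{H}_2(X;\Z) = \mathrm{H}_2(G;\Z)$, since $E^2_{1,1} = \mathrm{H}_1(X;\Omega_1^{SO}) = 0$ and $E^2_{0,2} = \mathrm{H}_0(X;\Omega_2^{SO}) = 0$.

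Next I would check that no differentials can affect $E^2_{2,0}$. Differentials $d_r$ on the AHSS have bidegree $(-r, r-1)$. The differential leaving $E^r_{2,0}$ lands in $E^r_{2-r, r-1}$, which for $r \ge 2$ has first index $\le 0$ and second index $\ge 1$, hence is a subquotient of $\mathrm{H}_{2-r}(X;\Omega_{r-1}^{SO})$; this is zero for $r = 2$ (it would be $\mathrm{H}_0(X;\Omega_1^{SO}) = 0$) and zero for $r \ge 3$ since $\mathrm{H}_{2-r} = 0$ when $r > 2$. The differential entering $E^r_{2,0}$ comes from $E^r_{2+r, 1-r}$, which has negative second index for $r \ge 2$, hence is zero. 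Therefore $E^\infty_{2,0} = E^2_{2,0} = \mathrm{H}_2(G;\Z)$, and this is the only nonzero term on the line $p+q = 2$, so the filtration on $\Omega_2^{SO}(K(G,1))$ has a single nonzero quotient and we conclude $\Omega_2(K(G,1)) \cong \mathrm{H}_2(G;\Z)$.

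The remaining point is naturality and the identification of the isomorphism with the Hurewicz-type map: the edge homomorphism of the AHSS gives a natural transformation $\Omega_n^{SO}(X) \to \mathrm{H}_n(X;\Z)$ sending the bordism class of $f : M^n \to X$ to $f_*[M]$, and in degree $2$ this is precisely the isomorphism above. Naturality in $G$ is then inherited from naturality of the AHSS and of this edge map under maps $K(G,1) \to K(G',1)$ induced by group homomorphisms; one checks functoriality of $K(-,1)$ up to homotopy suffices. For a purely geometric description one can also note that a class in $\Omega_2(K(G,1))$ is represented by a map from a closed oriented surface $\Sigma_g$, classified up to bordism by the image of its fundamental class, recovering the standard presentation of $\mathrm{H}_2(G;\Z)$ via surface groups; this is the formulation most useful for the application to $K_2(R)$ with $G = E(R)$.

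\textbf{Main obstacle.} The only real work is organizing the spectral sequence bookkeeping cleanly and making the naturality claim precise; the vanishing of $\Omega_1^{SO}, \Omega_2^{SO}, \Omega_3^{SO}$ is standard (Thom), and once those are in hand the argument is a two-line collapse of the AHSS. If one prefers to avoid the AHSS entirely, the alternative is a direct argument: build $K(G,1)$ with one $0$-cell and one $1$-cell per generator, so $\Omega_2(K(G,1))$ is detected on the $2$-skeleton (a wedge of circles with $2$-cells attached), and compare with the standard $\mathrm{H}_2$ computation --- but this is essentially the AHSS in disguise and I expect the spectral sequence route to be the shortest. The expected difficulty is therefore low; the statement is cited as known (Zimmermann, and~\cite{BMV, CF64}), and the proposal is simply to record the AHSS argument.
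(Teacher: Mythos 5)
Your argument is correct, but there is nothing in the paper to compare it against: Proposition~\ref{prop_Zimm} is stated as a recalled result with citations to Zimmermann~\cite{Zi}, to~\cite{BMV}, and to Conner--Floyd~\cite{CF64}, and no proof is supplied in the text. The Atiyah--Hirzebruch spectral sequence computation you record --- $E^2_{p,q}=\mathrm{H}_p(K(G,1);\Omega_q^{SO})$, the vanishing $\Omega_1^{SO}=\Omega_2^{SO}=0$, the absence of differentials touching $E^2_{2,0}$, and the identification of the resulting edge map with $[f\colon M\to X]\mapsto f_*[M]$ --- is exactly the content of those references, so your write-up supplies the intended justification rather than an alternative one. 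One small point you should make explicit: the paper phrases $\Omega_2(X)$ in terms of \emph{connected} oriented surfaces modulo \emph{connected} oriented $3$-dimensional bordisms, whereas the AHSS argument computes the ordinary (disjoint-union) oriented bordism group $\Omega_2^{SO}(X)$. These agree when $X$ is path-connected because any class is represented by a connected surface (connect components of the map by a $1$-handle running over a path in $X$) and any bordism between connected surfaces can likewise be connected by tubing its components together; the paper does assume $X$ path-connected and clearly intends this identification, but since you are recording the proof you should note the reconciliation. Your naturality remark via the edge homomorphism is the right one and is what the paper implicitly uses downstream when it passes to $G=E(R)$ and forms the associated flat $R^n$-bundle over a representing surface.
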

Here $\Omega_2(X)$, for a path-connected space $X$, is the abelian group of oriented connected 3D cobordisms between oriented connected surfaces equipped  with a continuous map to $X$. 
For a connected CW-complex $Y$ and a discrete group $G$, homotopy classes of maps 
\[[Y,K(G,1)]\ \cong \ \Hom(\pi_1(Y),G)/\sim
\]  
correspond to conjugacy classes of homomorphisms of $\pi_1(Y)$ to $G$ and classify isomorphism classes of principal $G$-bundles over $Y$.  
Thus, the space $\Omega_2(K(G,1))$ can be interpreted as cobordism classes of closed oriented connected surfaces $S$ equipped with a principal $G$-bundle, modulo cobordisms that are compact oriented 3-manifolds likewise equipped with a principal $G$-bundle. 

The group $E(R)$ is isomorphic to the direct limit
\[
E(R) \ = \ \lim_{\longrightarrow} E_n(R), \ \ E_n(R):=[\GL_n(R),\GL_n(R)],  
\]
and 
\[
\mathrm{H}_2(E(R)) \ \cong \ \lim_{\longrightarrow} \mathrm{H}_2(E_n(R)). 
\]

Using Proposition~\ref{prop_Zimm}, an element $x$ of $\mathrm{H}_2(E_n(R))$ can be realized by a surface $S$ with a principal $E_n(R)$-bundle $V\lra S$. Tensoring with $R^n$, on which $E_n(R)$ acts, one passes to the associated flat $R^n$-bundle $V'\lra S$. A 3D cobordism $M$ with a principal $E_n(R)$-bundle likewise gives rise to an associated $R^n$-bundle over $M$. To add compatibility with the direct limit $\lim_{n} E_n(R)$, one can form the trivial rank one $R$-bundle $V_R$ over $S$ and merge $V'$ and $V_R$ into the direct sum bundle $V_R\oplus V'$ over $S$ via the 3D foam given by the direct product $\tripod \times S$  of the tripod with $S$. Bundles $V'$ and $V_R$ then live over the two thin facets of the product, and $V_R\oplus V'$ over the thick facet. 

In this way, to any element of $\mathrm{H}_2(E(R))$ there is associated a surface $S$ with a bundle of $R^n$-modules, for some $n$, and a flat connection, uniquely determined up to a 3D $R$-decorated foam cobordism, due to Proposition~\ref{prop_Zimm}. This gives a homomorphism $K_2(R)\lra \Cob_2(R)$ from the second K-group of $R$ to the cobordism group of $R$-decorated 2-foams.

\vspace{0.07in}

Relatedly, the Steinberg symbol $\{a,b\}\in K_2(\kk)$ for $a,b\in \kk^{\ast}$, where $\kk$ is a field, can be represented by the 2-torus $T^2$ with a rank one $\kk$-bundle over it with monodromies $a$ and $b$ along the longitude and the meridian. The 2-torus is viewed as a 2-foam. Defining relations in the Steinberg group are easy to interpret via cobordisms between union of tori and basis change through a diffeomorphism of $T^2$.  
Milnor's K-theory groups then acquire an interpretation via flat connection on tori in all dimensions. 

A variation of this construction works for surfaces embedded in $\R^3$ and cobordisms between them embedded in $\R^3\times [0,1]$, giving a homomorphism $K_2(R)\lra \Cob_2^1(R)$. 

%%%%%%%%%%%%%%%
%
% K1 and Quillen's construction
%
%%%%%%%%%%%%%

\section{\texorpdfstring{$K_1$}{K1} of an exact category via the Quillen Q-construction and 1-foam cobordism} \label{sec_K1_Quillen}

In this section we provide a conceptual explanation why the cobordism group of planar 1-foams with a flat $\mcC$-connection should be isomorphic to $K_1(\mcC)$, for an exact category $\mcC$.

\vspace{0.07in} 

{\it One-foam associated to a morphism.}
Let us start with an abelian category $\mcA$ and a morphism $f:M\lra N$ in $\mcA$. Associated to $f$ there are two SES (short exact sequences) 
\begin{equation}   \ker f \lra M \lra \im f, \ \ \ \ 
\im f \lra N \lra \coker f.
\end{equation}  
Recall the framework, described in the earlier sections, of flat connections on foams.  
We can encode this collection of objects and short exact sequences via a planar 1-foam with boundary, a decorated graph with 2 trivalent vertices and four legs, and a flat connection on it, see Figure~\ref{fig3_013}.  

\input{fig3_013}

\vspace{0.1in} 

To a composable pair of morphisms 
$M \stackrel{f}{\lra}N\stackrel{g}{\lra}K$ one can associate two 1-foams with boundary: the composition of foams for $f$ and $g$ glued along $N$ and the 1-foam for $gf$, see Figure~\ref{fig3_014}. 

\input{fig3_014}

\vspace{0.1in} 

One can then look for a 2-foam cobordism with boundary between these two 1-foams. Notice that the boundaries of these two 1-foams differ. A one way to relate the boundaries is shown in Figure~\ref{fig09_001}.

\vspace{0.07in}

\input{fig09_001}

\vspace{0.07in}

There the bottom and top facets on the surface of a cube depict the two foams. The boundaries  of these foams may be related through the two 1-foams drawn on the side facets of the cube and, separately, in Figure~\ref{fig09_002}. 

\vspace{0.07in}

\input{fig09_002}

\vspace{0.07in}

Additionally, one needs to add an edge labelled by $\ker \: g/I$ and have it intersect the foam associated to the map $gf$, to complete the entire diagram to a closed 1-foam on the surface of a cube (or on the annulus, since the front and back facets of the cube are empty). 

Related SESs and chains of submodules for a composable pair $(f,g)$ of morphisms in the abelian category $\mcA$ are written down below: 
\begin{eqnarray*}
 f: & \ker f \lra M \lra \im f &
    \im f \lra N \lra \coker f, \\
    g: & \ker g \lra N \lra \im\, g &  \im g \lra K \lra \coker\,  g, \\
  gf: &  \ker gf \lra M \lra \im \, gf & \im\, gf \lra K \lra \coker\, gf.
\end{eqnarray*}
Module $N$ contains a lattice of submodules (related SESs and direct sum decompositions are shown below as well): 

\vspace{0.07in}

\input{fig09_003}

\vspace{0.07in}

We would like to bound the 1-foam on the boundary of the cube shown in Figure~\ref{fig09_001} by a 2-foam embedded in the cube. First, redraw this 1-foam in the plane, as shown in Figure~\ref{fig09_004} and thinking of it as a foam in $\SS^2=\R^2\cup \{\infty\}$ by adding a point at infinity. Orders of thin edges at vertices are as in Figure~\ref{fig09_001}, compatible with the planar foam structure and not shown in Figure~\ref{fig09_004} and the subsequent figures.   

\input{fig09_004}

This 1-foam can be converted to a cobordant  foam shown in Figure~\ref{fig09_005} by splitting off an inner circle labelled $\coker g$ and an outer circle labelled $\ker f$. Next, these two circles can be removed by \emph{cap} cobordisms, since monodromies around these circles are trivial. To remove the outer circle we use that the foam is in $\SS^2$.  

\input{fig09_005}

\vspace{0.07in}

From the remaining 1-foam we can split an inner circle labelled $N/J$ and an outer circle labelled $I$, see Figure~\ref{fig09_006}. The monodromies on these circles are trivial and they can be removed via cobordisms in $\SS^2\times [0,1]$. 

\vspace{0.07in}

\input{fig09_006}

\vspace{0.07in}

The remaining 1-foam is shown in Figure~\ref{fig09_007}. Note that during all the cobordisms the virtual     intersection of edges labelled $\ker\, g/I$ and $\im \, gf \cong \im f /I$ has remained. It seems natural to convert the edge labelled $J/I$ together with its 2-vertices into another virtual intersection, resulting in two circles that intersect each other twice. We can them pull them out and bound by a pair of caps, resulting in the homotopy to the empty foam in $\SS^2$. 

\vspace{0.07in}

\input{fig09_007}

\vspace{0.07in}

Putting all homotopies together, one gets a homotopy in $\SS^2\times [0,1]$ from the 1-foam embedded in $\SS^2$ and shown in Figures~\ref{fig09_001} and~\ref{fig09_004} to the empty foam in $\SS^2$. It is unclear, though, if there is some meaning to this construction or its generalization. One can also question whether it is natural to squish the foam with a virtual crossing in Figure~\ref{fig09_007}  on the left into $\SS^2$ or whether it should be viewed as a foam in $\mathbb{RP}^2$ instead. In the latter case null-cobordance would be unclear, since $\mathbb{RP}^2$ does not bound a 3-manifold but does bound a cone over itself. 

A similar foam interpretation of morphisms and their composition in the Quillen category $\mcQC$ is much more straightforward, as we now explain.

\vspace{0.07in} 

{\it Foams for the Quillen category.}
We next introduce foam diagrammatics for the Quillen category $\mcQC$ associated to an exact category $\mcC$ (see~\cite[Section 4]{Sr} for an introduction to $\mcQC$). Category $\mcQC$ has the same objects as $\mcC$. A morphism $X\lra Y$ is an isomorphism class of diagrams 
\[ X \stackrel{q}{\twoheadleftarrow} Z \stackrel{i}{\hookrightarrow} Y
\] 
with $q$ an admissible epimorphism and $i$ an admissible monomorphism in $\mcC$, which means there are short exact sequences in $\mcC$
\begin{equation*} X'\lra Z \stackrel{q}{\lra} X, 
\qquad 
Z\stackrel{i}{\lra} Y \lra Y'' .
\end{equation*}

To define composition of morphisms $ X \twoheadleftarrow Z \hookrightarrow Y$ 
and $Y\twoheadleftarrow V \hookrightarrow T$, consider the diagram 

\vspace{0.1in}

\begin{center}
$\xymatrix{
X &   & \\ 
Z \ar@{->>}[u]  \ar@{^{(}->}[r] & Y &  \\ 
Z\times_Y V \ar@{->>}[u] \ar@{^{(}->}[r] & V \ar@{->>}[u] \ar@{^{(}->}[r] & T. 
}$
\end{center}

\vspace{0.1in} 

Maps in the lower left corner $Z\times_Y V\twoheadrightarrow X$ and $Z\times_Y V\hookrightarrow Y$ are admissible epi and mono, with 
\begin{eqnarray*}
  & & \ker(Z\times_Y V\rightarrow Z) \cong \ker(V\twoheadrightarrow  Y), \\
  & & \coker(Z\times_Y V \hookrightarrow V) \cong \coker(Z\hookrightarrow Y).
\end{eqnarray*}

The product of morphisms is given by the diagram 
\begin{equation}\label{eq_composition}
     X \twoheadleftarrow Z\times_Y V \hookrightarrow T.
\end{equation}

We depict morphisms 
\begin{equation} \label{eq_two_morph}
X \twoheadleftarrow Z \hookrightarrow Y \ \ \mathrm{and} \ \ Y\twoheadleftarrow V \hookrightarrow T
\end{equation} 
using planar 1-foams shown in Figure~\ref{fig3_015} left and center, where the remaining terms in the SES associated to the second morphism are denoted by $Y',T''$: 
\begin{equation}
    V\lra T \lra T'' , \ \ Y'\lra V \lra Y. 
\end{equation}
Their composition is depicted on the right of Figure~\ref{fig3_015}, with additional objects $\widetilde{X}$ and $\widetilde{T}$ coming from the corresponding SES. Moreover, there are SES 
\begin{equation}\label{eq_two_more}   Y' \lra \widetilde{X} \lra X', \ \ \  \ 
    Y'' \lra \widetilde{T} \lra T''. 
\end{equation}  

\input{fig3_015}
 
There is a canonical 2-foam with boundary, embedded in the cube,  describing a cobordism between concatenation of 1-foams for the two composable morphisms $X\lra Y, Y\lra T$ in $\mcQC$ to the 1-foam for their composition, see Figure~\ref{fig3_016}.
This foam has 10 seams and three vertices. 

\input{fig3_016}

The foam can be visualized via its  embedding in the cube $[0,1]^3$.
Bottom and top facets of the cube contain 1-foams from Figure~\ref{fig3_015} for the pair of morphisms (\ref{eq_two_morph})  and for the morphism (\ref{eq_composition}), respectively. Side facets contain 1-foams which are vertical lines labelled $X$ and $T$. Front and back facets contain tripod 1-foams that are  neighbourhoods of vertices  describing SES in (\ref{eq_two_more}). 
This 2-foam associated to a composition of morphisms in $\mcQC$ can also be viewed as a cobordism with corners between the two tripod 1-foams in the front and back of the diagram. 

Figures~\ref{fig3_017},~\ref{fig3_018} and \ref{fig3_019} show neighbourhoods of the three vertices in Figure~\ref{fig3_016}, together with the four short exact sequences corresponding to the four seams at each vertex and links of the vertices.

\input{fig3_017}
 
\input{fig3_018}

\input{fig3_019}

\vspace{0.07in} 

Recall that the $n$-th algebraic $K$-theory group $K_n(\mcC)$ $\cong$ $\pi_{n+1}(B\mcQC)$, which is the Quillen Q-construction. In particular, 
$K_1(\mcC)\cong \pi_{2}(B\mcQC)$. An element of $\pi_2(B\mcQC)$ is represented by a based continuous map $\SS^2\lra B\mcQC$, which can be deformed to a based PL map $\psi$ of $\SS^2$ to the 2-skeleton $B\mcQC^2$. Map $\psi$ can be described by a triangulation $T$ of $\SS^2$ together with a map that takes each 2-simplex of the triangulation linearly onto a $k$-simplex of $B\mcQC$ for some $k\le 2$. We pick the basepoint in $B\mcQC$ to be the $0$-cell $[0]$ of the zero object. 

Two-simplices of the classifying space $B\mcQC$ correspond to composable pairs of morphisms in $\mcQC$. 
To a pair $(T,\psi)$ as above representing an element of $\pi_2(B\mcQC)$ we associate a 2-foam $F(T,\psi)$ in $\SS^2\times [0,1]$ as follows. To each 2-simplex in $T$ that maps to a 2-simplex in $B\mcQC$ we associate a 2-foam  with boundary and corners as shown in Figure~\ref{fig3_015}, which is the 2-foam in Figure~\ref{fig3_016} realized as a foam embedded in $\Delta^2\times [0,1]$ rather then embedded in the square times the interval.   

\input{fig3_020}

\input{fig3_021}

To a 2-simplex in $T$ that maps onto a 1-simplex in $B\mcQC$ associate the 2-foam shown in  Figure~\ref{fig3_021} left together with its obvious embedding in $\Delta^2\times [0,1]$. 
To a 2-simplex in $T$ that maps onto a 0-simplex in $B\mcQC$ corresponding to an object $X\in \Ob(\mcC)$ associate the foam which is just the triangle decorated with $X$,  embedded as $\Delta^2\times \{1/2\}$ in $\Delta^2\times [0,1]$, see Figure~\ref{fig3_022}.  

\input{fig3_022}

Gluing together the union of these 2-foams in $\Delta^2\times [0,1]$, over all 2-simplices $\Delta^2$ of the above decomposition of $\SS^2$, results in a 2-foam $F(T,\psi)$ in $\SS^2\times [0,1]$. Foam $F(T,\psi)$ is the union of  the 2-sphere $\SS^2\times \{1/2\}$, direct products 
$F_0(T,\psi)\times [0,1/2)$ and $F_1(T,\psi)\times (1/2,1]$, 
where 
$F_i(T,\psi):=F(T,\psi)\cap (\SS^2\times \{i\})$ for $i=0,1$ are the one-foams in $\SS^2$ which are the intersections of $F(T,\psi)$ with the two boundary 2-spheres of $\SS^2\times [0,1]$.

Note that the 2-foam $F(T,\psi)$ is a cobordism from $F_0(T,\psi)$ to $F_1(T,\psi)$, so these two 1-foams in $\SS^2$ are cobordant to each other via the 2-foam $F(T,\psi)\subset \SS^2\times [0,1]$.

To the pair $(T,\psi)$ which is a basepoint-preserving simplicial map of $\SS^2$ to $B\mcQC$ associate the 1-foam $F_0(T,\psi)$ in $\SS^2$ and the corresponding element $[F_0(T,\psi)]\in \Cob_1^1(\mcC)$ in the cobordism group of $\mcC$-decorated 1-foams in $\SS^2$. Note that $[F_0(T,\psi)]=[F_1(T,\psi)]$ in $\Cob_1^1(\mcC)$. 

We expect that this assignment does not depend on the choice of $(T,\psi)$ given an element of $\pi_2(B\mcQC)\cong K_1(\mcC)$, resulting in a well-defined homomorphism 
\begin{equation}
    \gamma^1_{1,\mcC} \ : \ K_1(\mcC) \lra \Cob_1^1(\mcC). 
\end{equation}
One can further expect this map to be an isomorphism, and we plan to address this question and its higher-dimensional generalization in a follow-up paper. Generalizing to higher dimensions requires working with $n$-foams embedded in $\R^{n+1}$. Given an element of $K_n(\mcC)\cong \pi_{n+1}(B\mcQC)$, one picks a simplicial map $\psi:\SS^{n+1}\lra B\mcQC$ realizing this element of $\pi_{n+1}$. One can form the dual of the standard cell decomposition of the  classifying space $B\mcQC$ and take its $n$-skeleton. The inverse image of the $n$-skeleton under the map $\psi$, suitably decorated, will be an $n$-foam in $\R^{n+1}$. One expects the above correspondence to go through, relating algebraic K-theory groups $K_n(\mcC)$ with the cobordism group of $\mcC$-decorated $n$-foams in $\R^{n+1}$.

%%%%%%%%%%%%%%%%%%%%%
%%
%%   REFERENCES 
%%
%%%%%%%%%%%%%%%%%%%%

\bibliographystyle{amsalpha} 
\bibliography{k_theory}

\end{document}